\newtheorem{theorem}{Theorem}
\newtheorem{lemma}{Lemma}
\newtheorem{definition}{Definition}
\newtheorem{corollary}{Corollary}
\newtheorem{Th}{Theorem}[section]
\newtheorem{rk}[Th]{Remark}
\let \ssection=\section
\renewcommand{\section}{\setcounter{equation}{0}\ssection}
\def\^#1{\if#1i{\accent"5E\i}\else{\accent"5E #1}\fi}
\def\"#1{\if#1i{\accent"7F\i}\else{\accent"7F #1}\fi}
\newcommand{\EE}{\mathbb E}
\newcommand{\PP}{\mathbb P}
\newcommand{\RR}{\mathbb R}
\newcommand{\CC}{\mathbb C}
\newcommand{\FF}{\mathcal F}
\newcommand{\ee}{\varepsilon}
\newcommand{\<}{\langle}
\renewcommand{\>}{\rangle}
\newcommand{\s}{\sigma}
\begin{document}
\title[On the splitting method]
{On the splitting method for Schr\"odinger-like evolution equations}
\author[Z. Brzezniak]
{Zdzislaw Brzezniak}
\thanks{ }
\address{Department of Mathematics, University of York, Heslington, York YO10 5DD, UK }
\email{zb500@york.ac.uk}

 \author[A. Millet]{Annie Millet}
\address
{Laboratoire de Probabilit\'es et Mod\`eles Al\'eatoires
(CNRS UMR 7599), Universit\'es Paris~6-Paris~7, Bo\^{\i}te Courrier 188,
4 place Jussieu, 75252 Paris Cedex 05,
{\it  and }\\
\indent Centre d'Economie de la Sorbonne (CNRS UMR 8174), \'Equipe
 SAMOS-MATISSE,
 Universit\'e Paris 1 Panth\'eon Sorbonne,
 90 Rue de Tolbiac, 75634 Paris Cedex 13}
\email{annie.millet@upmc.fr {\it and} annie.millet@univ-paris1.fr}

\begin{abstract}Using the approach of the splitting method developed by I.~Gy\"ongy and N.~Krylov
for parabolic quasi linear equations, we study the speed of convergence for general 
complex-valued stochastic
evolution equations.
 The approximation is given in general Sobolev spaces and the model considered here
contains both the parabolic quasi-linear equations under some (non strict)
 stochastic parabolicity condition as
well as linear Schr\"odinger equations
\end{abstract}

\subjclass[2000]{Primary  60H15, 65M12; Secondary 65M60, 65M15}

\keywords{Stochastic evolution equations, Schr\"odinger equation, splitting method,
speed of convergence, discretization scheme}

\maketitle

\section{Introduction} \label{intro}

Once the well-posedeness of a stochastic differential equation is proved, an important issue is to provide an efficient
way to approximate the unique solution. The aim of this paper is to propose  a fast converging scheme which gives
 a simulation of the trajectories of the solution  on a discrete time grid,  and in terms of some spatial approximation.
  The first results in this direction were obtained for Stochastic Differential Equations and it
 is well-known that the limit is sensitive to the approximation. For example, the Stratonovich integral
 is the limit of Riemann sums with the mid-pint approximation and the Wong Zakai approximation
 also leads to Stratonovich stochastic integrals, and not to the  It\^{o} ones in this finite dimensional framework.
There is a huge literature on this topic for stochastic PDEs,
mainly extending classical deterministic PDE methods to the stochastic framework.
Most of the papers deal with parabolic PDEs and take advantage of the smoothing effect of the
second order operator; see e.g. \cite{Gy1},  \cite{Prin}, \cite{Haus}, \cite{GM}, \cite{GM2},
\cite{GM2}, \cite{MM} and the references therein. The methods used in these papers are explicit, implicit or Crank-Nicholson
time approximations and the space discretization is made in terms of finite differences, finite elements
or wavelets. The corresponding speeds of convergence are the ''strong'' ones, that is uniform in time on some
bounded interval $[0,T]$ and with various functional norms for the space variable.
Some papers also study numerical schemes in other ``hyperbolic``  situations, such as the
KDV or Schr\"odinger equations as in \cite{DePr1}, \cite{dBD1} and \cite{dBD2}.  Let us also mention
the weak speed of convergence, that is of an  approximation of the expected value of a functional
of the solution by the similar one for the scheme obtained by \cite{dBD2} and \cite{DePr}. These references extend
to the infinite-dimensional setting a very crucial problem for finite-dimensional diffusion processes.

Another popular approach in the deterministic setting, based on  semi-groups theory, is the splitting method
which solves successively several evolution equations.  This technique has been used in a stochastic
case in a  series of papers by I. Gy\"ongy and N. Krylov. Let us especially mention reference \cite{GK}
which uses tools  from \cite{Pa}, \cite{KR-81} and \cite{KR-82}, and provides a very
elegant approach to study quasilinear evolution equations under (non strict)
stochastic parabolicity conditions. In their framework, the smoothing effect of the second operator is exactly
balanced by the quadratic variation of the stochastic integrals, which implies that there is no increase
of space regularity with respect to that of the initial condition. Depending on the number of
steps of the splitting, the speed of convergence is at least twice that of the classical finite differences
or finite elements methods.  A series of papers has been using
the splitting  technique in the linear and non-linear cases for the deterministic Schr\"odinger equation; see e.g. \cite{BM},
\cite{DF}, \cite{NT} and the references therein.

The stochastic Schr\"odinger equation studies  complex-valued processes where the second order
 operator $i \Delta$ does not improve (nor decay) the space regularity of the solution with respect
 to that of the initial condition. Well-posedeness of this equation has been proven in a non-linear
 setting by A. de Bouard and A. Debussche \cite{dBD2}; these authors have also studied
 finite elements discretization schemes for the corresponding solution under conditions stronger than that in \cite{dBD1}.

 The aim of this paper is to transpose the approach from \cite{GK} to general  quasilinear complex-valued
 equations including both the ``classical degenerate``  parabolic setting as well as
 the quasilinear Sch\"odinger equation. Indeed, the method used in \cite{GK} consists
 in replacing the usual splitting via  semi-groups arguments  by the study of $p $-th  moments of $Z^{0}- Z^{1}$
 where $Z^{0}$ and $Z^{1}$ are solutions of two stochastic
 evolution equations with the same driving noise and different families of  increasing processes
 $V_{0}^{r}$ and $V_{1}^{r}$ for $r=0, 1, \cdots, d_{1}$ driving the drift term.
 It does not extend easily to nonlinear drift terms because it is based on some   linear interpolation
  between the two cases $V_{0}^{r}$ and $V_{1}^{r}$.
 Instead of getting an upper estimate of the $p$-th moment of $Z^{0}- Z^{1}$
 in terms of the total variation of the measures defined by the differences
 $V_{0}^{r}- V_{1}^{r}$, using  integration by parts they obtain an upper estimate in terms of the sup norm of the process
 $(V_{0}^{r}(t)- V_{1}^{r}(t), t\in [0,T])$.

 We extend this model as follows:
  given  second order linear differential operators $L^{r}$, $r=0, \cdots, d_{1}$ with complex coefficients,
  a finite number of  sequences of first order linear operators $S^{l}$, $l\geq 1$
with complex coefficients, a   sequence
 of real-valued martingales $M^{l}$, $l\geq 1$ and a finite number of  families of real-valued
increasing processes $V_{i}^{r}$, $i\in \{0,1\}$, $r=0, \cdots,  d_{1}$,
  we consider the following system of stochastic evolution
equations 
\[ dZ_{i}(t) = \sum_{r}  L_{r}(t,\cdot)  Z_{i}(t)  dV^{r}_{i}(t) + 
 \sum_{l}  S_{l}(t,\cdot) Z_{i}(t) dM^{l}(t), \;\; i=1, \cdots, d,
\]
with an initial condition $Z_{i}(0)$ belonging to  the  Sobolev space $H^{m,2}$  for a certain  $m\geq 0$.
Then under proper assumptions on the various coefficients and processes, under which
a stochastic parabolicity condition (see Assumptions {\bf(A1)}- {\bf(A4(m,p))}
in section \ref{apriori}), we prove that for $p\in [2,\infty)$, we have
\begin{equation} \label{speedIntro}
 \EE \Big( \sup_{t\in [0,T]} \|Z_{1}(t) - Z_{0}(t)\|_{m}^{p}\Big) \leq C \Big( \EE\|Z_{1}(0)-Z_{0}(0)\|_m^{p} +
A^{p}\Big) ,
\end{equation}
where  $A = \sup_{\omega} \, \sup_{t\in [0,T]}\, \max_{r}  |V^{r}_{1}(t) - V^{r}_{0}(t)|$.
When the operator $L_{r}= i\Delta + \tilde{L}^{r}$ for certain first order differential operator $\tilde{L}_{r}$, we obtain
the quasilinear Schr\"odinger equation. Note that in this case, the diffusion operators $S^{l}$ are linear
 and cannot contain first order derivatives.

As in \cite{GK}, this abstract result yields the speed of convergence of the following splitting method.
Let $\tau_{n}= \{ iT/n, i=0, \cdots, n\}$ denote a time grid on $[0,T]$ with constant mesh $\delta = T/n$
and define the increasing processes $A_{t}(n)$ and $B_{t}(n)= A_{t+\delta}(n)$, where
\[ A_{t}(n)= \left\{ \begin{array}{ll}
k\delta & \; \mbox{\rm for }  t\in [2k\delta , (2k+1)\delta] , \\
t-(k+1)\delta& \; \mbox{\rm for }  t\in [(2k+1)\delta, (2k+2)\delta] .
\end{array}
\right.
\]
  Given a time-independent second order  differential operator $L$, first order time-independent
operators $S^{l}$ and   a sequence $(W^l, l\geq 1)$ of independent
one-dimensional Brownian motions, let
$Z$, $Z_{n}$ and $\zeta_{n}$ be solutions to the evolution equations
\begin{align*}
dZ(t) &=  L Z(t) dt +\sum_{l} S_{l} Z(t) \circ dW^{l}_{t},\\
dZ_{n}(t) & =  L Z_{n}(t) dA_{t}(n) +\sum_{l} S_{l} Z_{n}(t) \circ dW^{l}_{B_{t}(n)},\\
d\zeta_{n}(t) &=  L \zeta_{n}(t) dB_{t}(n) +\sum_{l} S_{l}(t,\cdot) \zeta_{n}(t)\circ  dW^{l}_{B_{t}(n)},
\end{align*}
where $\circ dW^l_t$ denotes the Stratonovich integral.
 The Stratonovich integral is known to be the right one to ensure stochastic parabolicity
when the differential operators $S_{l}$ contain first order partial derivatives (see e.g. \cite{GK}).
Then $\zeta_{n}(2k\delta,x)= Z(k\delta,x)$, while the values of $Z_{n}(2k\delta,x)$ are those
of the process $\tilde{Z}_{n}$ obtained by the following spitting method: one solves successively the correction
and prediction  equations on each time interval
$[iT/n, (i+1)T/n)]$:
$dv_{t}=  L v_{t} dt$  and then  $d\tilde{v}_{t} = \sum_{l} S^{l}(t,\cdot) \tilde{v}_{t}\circ dW^{l}_{t}$.
 Then, one has $A=CT/n$, and we deduce that
$\EE\Big( \sup_{t\in [0;T]}  \|Z(t) - \tilde{Z}_{n}(t)\|_{m}^{p}\Big) \leq C n^{-p}$.
As in \cite{GK}, a $k$-step splitting would yield a rate of convergence of $C n^{-kp}$.

The paper is organized as follows. Section \ref{apriori} states the model, describes the evolution
equation, proves well-posedeness as well as apriori estimates.
  In section \ref{speedabstract}  we prove  \eqref{speedIntro} first in the case of
 time-independent coefficients
of the differential operators, then in the general case under more regularity  conditions.
As explained above, in section \ref{sspeed} we deduce
 the rate of convergence of the splitting method for evolution equations
generalizing the quasi-linear Schr\"odinger equation.
The speed of convergence of the non-linear Schr\"odinger equation will be addressed in a forthcoming paper.
As usual, unless specified otherwise,  we will denote by $C$ a constant which  may change from one line to the next.
\section{Well-posedeness and first apriori estimates}\label{apriori}
\subsection{Well-posedeness results}
Fix $T>0$, $\mathbb{F}=(\FF_t, t\in [0,T])$ be a filtration on the probability space $( \Omega,\FF,\PP)$
 and consider the following ${\CC}$-valued evolution equation on the process $Z(t,x)= X(t,x) +i Y(t,x)$
 defined for $t\in [0,T]$ and $x\in \RR^d$:
\begin{align} \label{Z}
dZ(t,x)&=\sum_{r=0}^{d_1} \big[ L_r Z(t,x) + F_r(t,x)]\, dV^r_t
 + \sum_{l\geq 1} \big[ S_l Z(t,x) + G_l(t,x)\big] dM^l_t,\\
Z(0,x)&=Z_0(x)=X_0(x)+iY_0(x), \label{CIZ}
\end{align}
where $d_1$ is a positive integer,  $ (V^r_t, t\in [0,T])$, $r=0, 1, \cdots, d_1$  are
 real-valued increasing processes,
 $\big( M^l_t, t\in [0,T]\big)$, $l\geq 1$,  are independent real-valued $(\FF_t, t\in [0,T])$-martingales,
 $L_r$ (resp.  $S_l$) are
second (resp.  first) order differential operators defined as follows:
\begin{eqnarray}
L_r Z(t,x)&=&\sum_{j,k=1}^{d} D_k\Big( \big[a^{j,k}_r(t,x) + i b^{j,k}_r(t)\big] D_jZ(t,x) \Big)
+ \sum_{j=1}^{d} a^j_r(t,x) D_j Z(t,x),  \nonumber \\
&& + \big[ a_r(t,x)+ib_r(t,x)\big] Z(t,x) , \label{Lr} \\
S_l Z(t,x)&=& \sum_{j=1}^{d}  \sigma_l^j(t,x)  D_j Z(t,x)
+ \big[ \sigma_l(t,x)+i\tau_l(t,x)\big] Z(t,x).
\label{Sl}
\end{eqnarray}
Let $m\geq 0$ be an integer. Given $\CC$-valued functions $Z(.)=X(.)+iY(.)$
 and $\zeta(.)= \xi(.)+i\eta(.) $ which belong to  $L^2(\RR^d)$, let
\[ (Z,\zeta) = (Z, \zeta)_0:= \int_{\RR^d} Re\big( Z(x) \overline{\zeta(x)}\big) dx = \int_{\RR^d}
 \big[ X(x)\xi(x)+Y(x)\eta(x) \big] dx.\]
Thus,  we have
$(X,\xi)=\int_{\RR^d} X(x) \, \xi(x)\, dx$, so that $(Z,\zeta) = (X,\xi) + (Y,\eta)$.
For any multi-index $\alpha = (\alpha_1, \alpha_2, \cdots, \alpha_d)$ with non-negative integer components
 $\alpha_i$, set $|\alpha|= \sum_j \alpha_j$
and for a regular enough function $f:\RR^d\to \RR$, let $D^\alpha f$ denote the partial derivative
$(\frac{\partial}{\partial x_1})^{\alpha_1}  \cdots
(\frac{\partial}{\partial x_d})^{\alpha_d} (f)$.
For $k=1, \cdots, d$, let  $D_kf$ denote the partial
derivative $\frac{\partial f}{\partial x_k}$.  For a $\CC$-valued function $F=F_1+i F_2$ defined on $\RR^d$, let
$D^\alpha F = D^\alpha F_1 +i D^\alpha F_2$ and $D_kF=D_kF_1+iD_k F_2$.
 Finally, given a positive integer $m$,
 say that $F\in H^m$ if and only if $F_1$ and $F_2$
belong to the (usual) real Sobolev space $H^m=H^{m,2}$. Finally, given $Z=X+iY$ and $\zeta=\xi+i\eta$ which belong to $H^m$,
set
\begin{align} \label{prodm}
(Z,\zeta)_m &
 =     \sum_{\alpha : 0\leq |\alpha| \leq m}  \int_{\RR^d}
Re\big( D^\alpha Z(x) \overline{D^\alpha\zeta(x)} \big) dx  \\
& =  \sum_{\alpha : 0\leq |\alpha| \leq m} \int_{\RR^d}
\big[ D^\alpha X(x) D^\alpha \xi(x)  + D^\alpha Y(x) D^\alpha \eta(x)\big] dx, \nonumber   \\
  \| Z\|_m^2&= (Z,Z)_m= \sum_{\alpha : |\alpha|\leq m} \int_{\RR^d}
Re \big( D^\alpha Z(x)\overline{D^\alpha Z(x)}\big) dx. \label{normHm}
\end{align}
 We suppose that the following assumptions are satisfied:
\smallskip

\noindent {\bf Assumption (A1)}  For  $r=0, \cdots, d_1$,  $(V^r_t, t\in [0,T])$  are predictable increasing
processes. There exist a positive constant $\tilde{K}$ 
 and an increasing predictable process
 $(V_t, t\in [0,T])$ such that:
\begin{align}
&  V_0=V_0^r=0,\;  r=0, \cdots d_1,\quad
 V_T\leq \tilde{K} \; \mbox{\rm  a.s.,} \label{V1}\\
&  \sum_{r=0}^{d_1} \! dV^r_t + \sum_{l\geq 1} d\langle M^l\rangle_t \leq dV_t   \quad
\mbox{\rm   a.s. in the sense of measures}. \label{V}
\end{align}

\noindent {\bf Assumption (A2)}\\
 \indent (i) For $r=0, 1, \cdots d_1$, the matrices $(a^{j,k}_r(t,x), j,k=1, \cdots d)$ and  $(b^{j,k}_r(t),
 j,k=1, \cdots d)$   are $(\FF_t)$-predictable 
 real-valued symmetric for almost every $\omega$, $t\in [0,T]$
 and $x\in \RR^d$.

(ii) For every $t\in (0,T]$ and $x,y \in \RR^d$
\begin{equation} \label{coercive}
\sum_{j,k=1}^d \!\! y^j y^k \Big[ 2\sum_{r=0}^{d_1}  a^{j,k}_r(t,x) \, dV^r_t - \sum_{l\geq 1}
\sigma_l^j(t,x) \sigma_l^k(t,x)   d\langle M^l\rangle_t\Big]  \geq 0
\end{equation}
 a.s. in the sense of measures.
\smallskip

\noindent {\bf Assumption (A3(m))}
There exists a constant $\tilde{K}(m)$ such that for all $j, k=1, \cdots d$, $r=0, \cdots, d_1$, $l\geq 1$,
any multi-indices $\alpha$ (resp. $\beta$) of length $|\alpha|\leq m+1$ (resp. $|\beta|\leq m$), and 
for every $(t,x)\in (0,T]
\times \RR^d$ one has a.s.
\begin{align}
|D^\alpha a^{j,k}_r(t,x)|+ |b^{j,k}(t)|+ |D^\alpha a^{j}_r(t,x)| + |D^\beta a_r(t,x)|+ |D^\beta b_r(t,x)|
\leq \tilde{K}(m) ,  \label{majoab} \\
|D^\alpha \sigma_l^j(t,x)|+  |D^\alpha \sigma_j(t,x)|+ |D^\alpha \tau_l(t,x)| \leq \tilde{K}(m).
 \label{majost}
\end{align}

\noindent {\bf Assumption (A4(m,p))} Let  $p\in [2,+\infty)$;
for any $r=0, \cdots, d_1$, $l\geq 1$, the processes
$F_r(t,x)=F_{r,1}(t,x)+i F_{r,2}(t,x)$ and $G_l(t,x)=G_{l,1}(t,x)+i G_{l,2}(t,x)$
are predictable, $F_{r}(t,\cdot)\in H^m$ and $G_{r}(t,\cdot)\in H^{m+1}$. Furthermore, if we denote
\begin{equation}\label{defK}
K_m(t) = \int_0^t \Big[ \sum_{ r=0}^{d_1} \|F_{r}(s)\|_m^2  \, dV^r_s
+ \sum_{l\geq 1}  \| G_l(s)\|_{m+1}^2  d\langle M^l\rangle_s \Big],
\end{equation}
then 
\begin{equation}\label{bndpini}
\EE \big( \|Z_0\|_m^p + K_m^{\frac{p}{2}}(T)\big) <+\infty.
\end{equation}

The following defines what is considered to be a (probabilistically strong) weak solution of the evolution equations
\eqref{Z}-\eqref{CIZ}.
\begin{definition}\label{defsol}
A $\CC$-valued $(\FF_t)$-predictable process $Z$ is a solution to the evolution equation \eqref{Z} with initial condition
$Z_0$ if  
\[ \PP\Big(\int_0^T \|Z(s)\|_1^2 ds <+\infty\Big)=1, \quad \EE \int_0^T |Z(s)|^2 dV_s <\infty, \]
 and for every $t\in [0,T]$ and $\Phi=\phi+i\psi$, where $\phi$ and
$\psi$ are  ${\mathcal C}^\infty$ functions with compact support from $\RR^d$ to $\RR$, one has a.s.
\begin{align}\label{sol}
(Z(t&),\Phi) = (Z(0),\Phi) + \sum_{r=0}^{d_1}\int_0^t
 \Big[ -\sum_{j,k=1}^{d} \Big( \big[a^{j,k}_r(s,\cdot)+ib^{j,k}_r(s)\big] D_jZ(s,\cdot) ,  D_k\Phi \Big) \nonumber  \\
& + \sum_{j=1}^d  \Big(a^j_r(s,\cdot)  D_j Z(s,\cdot) + [a_r(s,\cdot)+ib_r(s,\cdot)] Z(s,\cdot) + F_r(s,\cdot)
 ,  \Phi\Big)\Big]
dV_r(s) \nonumber \\
&+ \sum_{l\geq 1} \int_0^t \big( S_l(Z(s,\cdot)) + G_l(s,\cdot) , \Phi)\, dM^l_s.
\end{align}
\end{definition}

\begin{theorem} \label{Hmestimates}
Let $m\geq 1$ be an integer and suppose that Assumptions {\bf (A1),(A2),  (A3(m))} and {\bf (A4(m,2))} (i.e., for $p=2$)
 are satisfied.

(i) Then equations \eqref{Z} and \eqref{CIZ} have a unique solution $Z$, such that
\begin{equation}\label{apriori1}
\EE\Big(\sup_{t\in [0,T]} \|Z(t,\cdot)\|_m^2 \Big) \leq C\, \EE\Big(\|Z_0\|^2_m + K_m(T)\Big)<\infty ,
\end{equation}
for a  constant $C$ that only depends on the constants which appear in the above listed conditions.
Almost surely, $Z\in{\mathcal C}([0,T], H^{m-1})$ and almost surely the map $[0,T]\ni t\mapsto Z(t,\cdot)\in H^m$ is weakly continuous.

(ii) Suppose furthermore that Assumption  {\bf (A4(m,p))} holds for $p\in (2,+\infty)$. Then
there exists a constant $C_p$ as above such that
 \begin{equation}\label{apriori2}
\EE\Big(\sup_{t\in [0,T]}\|Z(t,\cdot)\|_m^p\Big) \leq C_p \, \EE\Big(\|Z_0\|^p_m + K_m^{p/2}(T)\Big).
\end{equation}
\end{theorem}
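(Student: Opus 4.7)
The plan is to follow the variational approach to stochastic evolution equations in the spirit of Krylov--Rozovskii \cite{KR-81,KR-82}, reducing everything to an energy estimate obtained via It\^o's formula applied to $\|Z(t)\|_m^2$. Assuming first sufficient regularity, I would apply $D^\alpha$ formally to both sides of \eqref{Z} for each multi-index $\alpha$ with $|\alpha|\leq m$, apply It\^o's formula to the real-valued process $\|D^\alpha Z(t)\|_0^2$, and sum over $\alpha$ to get an identity of the schematic form
\[
\|Z(t)\|_m^2 = \|Z_0\|_m^2 + \sum_r \int_0^t \mathcal{D}_r(s)\,dV^r_s + \sum_l \int_0^t \mathcal{Q}_l(s)\,d\langle M^l\rangle_s + N(t),
\]
where $\mathcal{D}_r$ collects the contributions from the $L_r Z+F_r$ drift terms, $\mathcal{Q}_l$ comes from the It\^o correction $\|S_l Z+G_l\|_m^2$, and $N(t)$ is a local martingale from the stochastic integrals.

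The key analytic step is to show that the top-order part of $\sum_r \mathcal{D}_r\,dV^r + \sum_l \mathcal{Q}_l\,d\langle M^l\rangle$ is non-positive. For $|\alpha|=m$, integration by parts yields
\[
(D^\alpha D_k(a_r^{j,k} D_j Z), D^\alpha Z)_0 = -(a_r^{j,k} D_j D^\alpha Z, D_k D^\alpha Z)_0 + R_\alpha,
\]
where the remainder $R_\alpha$ involves at most $m$ derivatives of $Z$ and is therefore controlled by $C\|Z\|_m^2$ under (A3(m)). The purely imaginary coefficients $ib^{j,k}_r$ produce a term of the form $\mathrm{Re}(i\cdot)$ applied to a quantity symmetric in $(j,k)$ (by (A2)(i)), hence vanishing. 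The martingale It\^o correction produces $\sum_l \sigma_l^j \sigma_l^k (D_j D^\alpha Z, D_k D^\alpha Z)_0\,d\langle M^l\rangle$ at top order. The coercivity assumption (A2)(ii), applied pointwise with $y=\nabla D^\alpha Z$, renders the sum of these top-order contributions non-positive. All remaining cross terms involving $F_r, G_l$ and lower-order pieces from $R_\alpha$ and from $a^j_r, a_r, b_r, \sigma_l, \tau_l$ are absorbed via Cauchy--Schwarz and Young by $C\|Z\|_m^2\,dV_s + dK_m(s)$.

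Stopping when necessary to make $N$ a true martingale and taking expectations leaves an inequality
\[
\EE\|Z(t)\|_m^2 \leq \EE\|Z_0\|_m^2 + C \int_0^t \EE\|Z(s)\|_m^2\,dV_s + \EE K_m(T),
\]
to which Gr\"onwall together with \eqref{V1} applies, yielding a bound on $\sup_t \EE\|Z(t)\|_m^2$; the supremum inside the expectation in \eqref{apriori1} is then recovered by applying the BDG inequality to $N(t)$. Uniqueness follows from the same estimate applied to the difference of two solutions with zero data. Existence is obtained by a Galerkin approximation in a nested sequence of finite-dimensional subspaces of $H^{m+1}$ (or by mollification of the coefficients): the a priori estimate gives uniform bounds, and weak compactness together with the linearity of the equation allows passage to the limit; continuity in $H^{m-1}$ and weak continuity in $H^m$ follow from standard functional-analytic arguments. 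For part (ii), I would apply It\^o's formula to $\|Z(t)\|_m^p=(\|Z(t)\|_m^2)^{p/2}$, treat the new martingale contribution via BDG plus Young to absorb a small multiple of $\EE\sup_t\|Z(t)\|_m^p$ on the right, and conclude by Gr\"onwall on $\EE\|Z(s)\|_m^p$ using \eqref{bndpini}.

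The main obstacle is the top-order cancellation: one must verify that the Schr\"odinger-type imaginary coefficients $ib^{j,k}_r$ produce contributions symmetric in $(j,k)$ that vanish against the real scalar product $(\cdot,\cdot)_0$, while the real coefficients $a^{j,k}_r$ combine with the quadratic variation of the $S_l$ exactly as in (A2)(ii). Since the $b^{j,k}_r$ are not assumed small, the inequality must be sharp at order $m+1$: all remainders genuinely need to be of order $\leq m$. This forces a careful commutator calculus for $D^\alpha$ with the variable coefficients $a_r^{j,k}$ and $\sigma_l^j$, and is where most of the bookkeeping lies.
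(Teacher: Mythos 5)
Your proposal follows essentially the same route as the paper: in both cases the heart of the matter is the stochastic parabolicity estimate $2\sum_r\langle L_rZ,Z\rangle_m\,dV^r_t+\sum_l\|S_lZ\|_m^2\,d\langle M^l\rangle_t\le K\|Z\|_m^2\,dV_t$, obtained exactly as you describe (integration by parts at top order, cancellation of the $ib^{j,k}_r$ contributions by symmetry of $b_r$, Assumption (A2)(ii) applied pointwise with $y=\nabla D^\alpha X$ and $y=\nabla D^\alpha Y$, commutators controlled by (A3(m))), followed by It\^o's formula, a stopping time, Gronwall and Burkholder--Davis--Gundy for the supremum, uniqueness from the same monotonicity and growth bounds, and the $p$-th moment estimate in (ii) by the same scheme. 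Two technical points where the paper deviates from (or sharpens) your outline are worth noting. First, instead of running Galerkin directly on the degenerate equation, the paper regularizes with $L_{r,\lambda}=L_r+\lambda\Delta$ (Galerkin being kept implicit), so that $Z^\lambda$ lies in $L^2(\Omega\times[0,T];H^{m+1})$ and the infinite-dimensional It\^o formula for $\|\cdot\|_m^2$ is licit; the uniform-in-$\lambda$ bounds then pass to the weak limit $\lambda\to0$. Your direct Galerkin (or mollification) route is viable for this linear problem, but you then owe an argument that the a priori bound and the energy identity survive the passage to the limit under only non-strict parabolicity, which is precisely what the vanishing viscosity is there to streamline. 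Second, your statement that the cross terms involving $G_l$ and the martingale terms are ``absorbed via Cauchy--Schwarz and Young'' is not enough as written: plain Cauchy--Schwarz gives $\|S_lZ\|_m\|G_l\|_m\le C\|Z\|_{m+1}\|G_l\|_m$ and $\|Z\|_m\|S_lZ\|_m\le C\|Z\|_m\|Z\|_{m+1}$, which lose a derivative. One needs the paper's condition {\bf (C2)}, namely $|(S_lZ,Z)_m|\le C\|Z\|_m^2$ and $|(S_lZ,G_l)_m|\le C\|Z\|_m\|G_l\|_{m+1}$, obtained by one more integration by parts exploiting the skew-symmetric structure of the first-order part of $S_l$; this is also why $K_m$ is defined with $\|G_l\|_{m+1}^2$ rather than $\|G_l\|_m^2$. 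These are refinements of the bookkeeping you already flag, not a change of method.
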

\begin{proof} 
Set ${\mathcal H}=H^m$, ${\mathcal V}=H^{m+1}$ and ${\mathcal V}'=H^{m-1}$. Then ${\mathcal V}
\subset {\mathcal H}\subset {\mathcal V}'$ is a Gelfand triple  for the equivalent norm
$ \vert(I-\Delta)^{m/2}u \vert_{L^2}$ on the space $H_{m}$.
 Given $Z=X+iY \in {\mathcal V}$
and $\zeta=\xi+i\eta\in {\mathcal V}'$ set
\[ \< Z,\zeta\>_m = \<X,\xi\>_m + \< Y,\eta\>_m, \quad \mbox{\rm and } \; \< Z,\zeta\>:=\< Z,\zeta\>_0, \]
where $\<X,\xi\>_m$ and $\< Y,\eta\>_m$ denote the duality  between  the (real) spaces $H^{m+1}$ and $H^{m-1}$.
For every multi-index  $\alpha$,   let
\[  
 {\mathcal I}(\alpha)= \{(\beta,\gamma)\; :\;  \alpha= \beta+\gamma,\;  |\beta|, |\gamma| \in
\{0, \cdots, |\alpha|\}\, \} .
\]  
 To ease notations, we skip the time parameter when writing the coefficients $a_r$, $b_r$, $\sigma_l$ and
$\tau_l$.  Then for $l\geq 1$, using the Assumption {\bf (A3(m))}, we deduce that if $Z=X+iY\in H^{m+1}$, we have
for $|\alpha|\leq m$,
\begin{equation} \label{DalphaS}
 D^\alpha [S_l(Z)]=  \sum_{ j=1}^d \sigma_l^j(x) \,  D_jD^\alpha Z
 +  \sum_{(\beta,\gamma)\in {\mathcal I}(\alpha)}  C_l(\beta, \gamma)  D^\beta Z ,
\end{equation}
 with  functions  $C_l(\beta,\gamma)$  from $\RR^d$ to $\CC$ such that
 $\sup_{l\geq 1} \sup_{x\in \RR^d} |C_l(\beta,\gamma)(x)|
<+\infty$.
A similar computation proves that for every multi-index $\alpha$ with $|\alpha|\leq m$, $r=0, \cdots, d_1$
\begin{equation} \label{DalphaL}
 D^\alpha [L_r(Z)]= L_r ( D^\alpha Z )
 + \sum_{j,k=1}^{d} \sum_{(\beta,\gamma)\in {\mathcal I}(\alpha) : |\gamma|=1} \!\!\!\!\!\!\!\!\! D_k\Big( D^\gamma a^{j,k}_r
D_j D^\beta Z\Big) +  \sum_{|\beta|\leq m}\! \! C_r( \beta, \gamma) D^\beta Z,
\end{equation}
for some bounded functions $C_r(\beta,\gamma)$ from $\RR^d$ into $\CC$.
Hence for every $r=0, \cdots, d_1$, one has a.s.  $L_r:{\mathcal V}\times \Omega \to {\mathcal V}'$
and similarly, for every $l\geq 1$, a.s.
 $S_l:{\mathcal V} \times \Omega \to {\mathcal H}$.

For every $\lambda>0$ and    $Z=X+iY \in {\mathcal H}$, let us set
\begin{equation} \label{Lrl}
 L_{r,\lambda}Z:= L_rZ+ \lambda(\Delta X +i \Delta Y) = L_r Z  + \lambda \Delta Z .
\end{equation}
Consider the evolution equation for the process  $Z^\lambda(t,x)=X^\lambda(t,x) +iY^\lambda(t,x)$,
\begin{align} \label{Ze}
dZ^\lambda(t,x)&=\sum_{r=0}^{d_1}  \big[ L_{r,\lambda}  Z^\lambda(t,\cdot)
 +  F_r(t,x)]\, dV^r_t  
  + \sum_{l\geq 1}  \big[ S_l Z^\lambda(t,x) + G_l(t,x)\big] dM^l_t,\\
Z^\lambda(0,x)&=Z_0(x)=X_0(x)+iY_0(x) . \label{CIZe}
\end{align}
In order to prove well-posedeness of the problem \eqref{Ze}-\eqref{CIZe},
 firstly we have to  check the following stochastic parabolicity condition:

\noindent {\bf Condition (C1)} There exists a constant $K>0$ such that for $Z\in H^{m+1}$, $t\in [0,T]$: 
\[
2\sum_{r=0}^{d_1}  \< L_rZ \, ,\, Z \>_m \,  dV^r_{t}  + \sum_{l\geq 0}
\|S_l(Z)\|_m^2 \, d\<M^l\>_t \leq K \|Z\|_m^2  \,  dV_t
\]
a.s. in the sense of measures.

Let $Z=X+iY \in H^{m+1}$;
using  \eqref{DalphaL} and
\eqref{DalphaS}, we deduce that
\begin{equation}  2\sum_{|\alpha|=m} \sum_{r=0}^{d_1} \< D^\alpha L^r Z\, ,\, D^\alpha Z \> \, dV^r_t
 + \sum_{|\alpha|=m} \sum_{l\geq 1}
|D^\alpha S_l Z|^2 \, d\<M^l\>_t = \sum_{\kappa=1}^{5} \, d T_\kappa(t),\label{decomp1}
\end{equation}
where to ease notation we  drop the time index in the right handsides and we set:
\begin{align*}
  dT_1(t)=  & \sum_{|\alpha|=m} \sum_{j,k=1}^d \!\!  \Big\{  -2 \sum_{ r=1}^{d_1}\!\!
\Big[ \big( a^{j,k}_r D_jD^\alpha X , D_kD^\alpha X\big)  
-  \big( b^{j,k}_r D_jD^\alpha Y , D_kD^\alpha X\big)  \\
& 
+ \big( a^{j,k}_r D_jD^\alpha Y , D_kD^\alpha Y\big) 
+  \big( b^{j,k}_r D_jD^\alpha X , D_kD^\alpha Y\big) \Big] dV^r_t \\
&
+ \sum_{l\geq 1}  \Big[
\big( \sigma^j_l D_jD^\alpha X , \sigma_l^k D_k D^\alpha X \big)
+ \big( \sigma^j_l D_jD^\alpha Y , \sigma_l^k D_k D^\alpha Y\big)
\Big] d\< M^l \>_t \Big\} , 
\\ 
dT_2(t)= &-2\sum_{r=0}^{d_1} \; \sum_{|\alpha|\leq m} \;
  \Big\{ \sum_{j,k=1}^{d} \; \sum_{(\beta,\gamma)\in {\mathcal I}(\alpha): |\gamma|=1}  \\
& \quad\quad  \Big[ \big(D^\gamma a^{j,k}_r D_jD^\beta X , D_kD^\alpha X\big)  
 + \big(D^\gamma a^{j,k}_r D_jD^\beta Y , D_kD^\alpha Y\big)\Big] \nonumber \\
& 
  + \sum_{j=1} ^{d } \Big[
\big( a_r^j D_jD^\alpha X , D^\alpha X\big) + \big( a_r^j D_jD^\alpha Y , D^\alpha Y\big)\Big] \Big\} dV^r_t  ,
 \\ 
 dT_3(t)=& \sum_{l\geq 1} \; \sum_{j,k=1}^{d} \; \sum_{|\alpha|=m}\;
\sum_{(\beta,\gamma)\in {\mathcal I}(\alpha) : |\gamma|=1} \Big[
\big( D^\gamma \sigma_l^j D_jD^\beta X , \sigma_l^k D_k D^\alpha X\big)  \\
 & 
  + ( D^\gamma \sigma_l^jD_jD^\beta Y , \sigma_l^k D_k D^\alpha Y\big)
 d\< M^l\>_t  ,  \\ 
dT_4(t)=& \sum_{l\geq 1} \; \sum_{j,k=1}^{d} \; \sum_{|\alpha|=m}\;  \Big[
(\sigma_l^j D_j D^\alpha X , \sigma_l D^\alpha X)
- (\sigma_l^j D_j D^\alpha X , \tau_l D^\alpha Y)  \\
&
 + (\sigma_l^j D_j D^\alpha Y , \tau_l D^\alpha X)
 + (\sigma_l^j D_j D^\alpha Y , \sigma_l D^\alpha Y)
\Big]  d\< M^l\>_t  ,  \\ 
 dT_5(t)= & \sum_{|\alpha|\vee |\beta| \leq m} \; \Big\{  \sum_{r=0}^{d_1} \; \Big[ \sum_{j,k=1}^{d}
\big( C_r^{j,k}(.) D^\beta Z , D^\alpha Z\big) \nonumber \\
& + \sum_{j=1}^{d}
\big( C_r^{j}(.) D^\beta Z , D^\alpha Z\big)
 + \big( C_r(.)   D^\beta Z , D^\alpha Z\big)\Big] dV^r_t \nonumber \\
& + \sum_{l\geq 1} \! \Big[ \!   \sum_{j=1}^{d}
\Big( \Big[  \tilde{C}_l(.) +  \sum_{l\geq 1} \tilde{C}^j_l(.)\Big] ]   D^\beta Z , D^\alpha Z\Big) \Big] d\< M^l\>_t\Big\},
\end{align*}
where $C_r^{j,k}$, $C_r^{j}$, $C_r$, $\tilde{C}_l^{j}$ and $\tilde{C}_l$ are bounded functions from $\RR^d$ to
$\CC$ due to Assumption {\bf (A4(m,p))} for any $p\in [2,\infty)$.

For every $r$  the matrix $b_r$ is symmetric; hence
 $\sum_{j,k} \big[ (b^{j,k}_r D_j D^\alpha X , D_k D^\alpha Y)
- (b_r^{j,k} D_jD^\alpha Y , D_k D^\alpha X)\big] =0$.
 Hence,
Assumption {\bf (A2)} used with $y_j=D_jD^\alpha X$ and  with $y_j=D_jD^\alpha Y$, $j=1, \cdots, d$,  implies $T_1(t)\leq 0$
for $t\in [0,T]$. Furthermore, Assumption {\bf (A3(m))}  yields the existence of a constant $C>0$ such that
 $dT_5(t) \leq C \|Z(t)\|_m^2 dV_t$ for all $t\in [0,T]$.
Integration by parts shows that for regular enough functions $f,g,h :\RR^d\to \RR$, $(\beta,\gamma)\in
   {\mathcal I}(\alpha)$ with $|\gamma|=1$,  we have
\begin{equation} \label{*}
 ( fD^\beta g, D^\alpha h) = - ( f D^\alpha g, D^\beta h\> - \< D^\gamma f D^\beta g , D^\beta h\>.
\end{equation}
Therefore, the symmetry of the matrices $a_r$ implies that for $\phi\in \{X(t),Y(t)\}$
 and $r=0, \cdots, d_1$,
\[\sum_{j,k=1}^{d} \big(D^\gamma a^{j,k}_r D_j D^\beta \phi , D_k D^\alpha \phi  \big)  =
-  \frac{1}{2} \sum_{j,k=1}^{d}  \big( D^\gamma (D^\gamma  a^{j,k}_r)  D_j D^\beta \phi , D_k D^\beta \phi \big) . \]
A similar argument proves that for fixed $j=1, \cdots, d$, $r=0, \cdots, d_1$  and $\phi=X(t)$ or $\phi=Y(t)$,
\[( a_r^j D_j D^\alpha \phi , D^\alpha \phi) = - \frac{1}{2} ( D_j a^j_r D^\alpha \phi , D^\alpha \phi).\]
Therefore,  Assumption {\bf (A3(0))}  implies  the existence of $K>0$ such that
$  dT_2(t)  \leq  K  \|Z(t)\|_m^2 dV_t$
for all $t\in [0,T]$. Furthermore,  $dT_4(t)$ is the sum of terms $(\phi \psi , D_j \phi)$ $ d\<M^l\>_t$
where $\phi=D^\alpha X(t)$ or $\phi= D^\alpha Y(t)$, and $\psi=fg$, with
$f\in \{\sigma_l^k \}$ and $g\in \{ \sigma_l,\tau_l\}$. The identity
$(\phi \psi , D_j \phi) = -\frac{1}{2} (\phi D_j\psi , \phi)$,  which is easily deduced from  integration by parts,
and Assumptions {\bf (A1)} and  {\bf (A3(m))} imply the existence of $K>0$ such that
$dT_4(t) \leq K \|Z(t)\|_m^2 dV_t$ for every $t\in [0,T]$.
The term  $dT_3(t)$ is the sum over $l\geq 1$ and multi-indices $\alpha$
with $|\alpha |=m$ of
\[ A(l,\alpha) = \sum_{j,k=1}^{d} \;
 \sum_{(\beta,\gamma)\in {\mathcal I}(\alpha): |\gamma|=1}\big( D^\gamma f^j_l f^k_l D_j D^\beta \varphi ,
D_k D^\alpha \varphi\big) ,\]
with $\varphi = X(t)$ or $\varphi = Y(t)$ and $f_l^j=\sigma_l^j$  for every $j=1, \cdots, d$.
Then, $A(l,\alpha)=B(l,\alpha) - C(l,\alpha)$, where  $ B(l,\alpha)=\sum_{j,k=1}^{d} B_{j,k}(l,\alpha)$ and
\begin{align*}  B_{j,k}(l,\alpha) =&   \sum_{(\beta,\gamma)\in {\mathcal I}(\alpha): |\gamma|=1}
\big(  D^\gamma(f^j_l f^k_l) D_j D^\beta \varphi ,
D_k D^\alpha \varphi \big) , \\
C(l,\alpha)= & \sum_{j,k=1}^{d} \sum_{(\beta,\gamma)\in {\mathcal I}(\alpha): |\gamma|=1}
\big(  D^\gamma f^k_l f^j_l D_j D^\beta \varphi , D_k D^\alpha \varphi\big) .
\end{align*}
Integrating by parts twice and exchanging the partial derivatives $D_j$ and $D_k$
in each term of the sum in $C(l,\alpha)$,
we deduce that
\begin{align*} &
\big(  D^\gamma f^k_l f^j_l D_j D^\beta \varphi , D_k D^\alpha \varphi\big)  =
 - \big( \big[ D_k [ D^\gamma f^k_l f^j_l ]  D_j D^\beta \varphi 
+  D^\gamma f^k_l f^j_l D_k  D_j D^\beta \varphi\big]  ,  D^\alpha \varphi\big) \\& =
  \big(  -  D_k [ D^\gamma f^k_l f^j_l ]  D_j D^\beta \varphi 
+   \big( D_j [ D^\gamma f^k_l f^j_l ]  D_k D^\beta \varphi  ,  D^\alpha \varphi\big)\\
&\qquad\qquad   + \big(  D^\gamma f^k_l f^j_l   D_k D^\beta \varphi ,
D_j D^\alpha \varphi\big).
\end{align*}
On the other hand, by symmetry  we obviously have
\[ \sum_{j,k}  \big( D^\gamma f^j_l f^k_l D_k D^\beta \varphi ,
D_j D^\alpha \varphi\big)  =
\sum_{j,k} \big(  D^\gamma f^k_l f^j_l D_j D^\beta \varphi , D_k D^\alpha \varphi\big) .\]
Using  Assumptions {\bf (A1)} and  {\bf (A3(1))} we deduce that there exist bounded functions
 $\phi(\alpha,\tilde{\alpha},l)$ defined for multi-indices $\tilde{\alpha}$ which have at most
one component different from those of $\alpha$, and  such that
\[ A(l,\alpha) = \frac{1}{2} B(l,\alpha) +
 \sum_{|\tilde{\alpha}|=m} \big( \phi(\alpha, \tilde{\alpha},l) D^{\tilde{\alpha}} \Phi ,
D^\alpha \Phi\Big).\]
Furthermore, 
 integration by parts yields
\[ \sum_{j,k=1}^{d} B_{j,k}(l,\alpha)=-\frac{1}{2} \sum_{j,k=1}^{d}
 \sum_{(\beta,\gamma)\in {\mathcal I}(\alpha): |\gamma|=1}
\big( D^\gamma D^\gamma [f^j_l f^k_l] D_j D^\beta \varphi , D_k  D^\beta \varphi\big)  .\]
Thus, Assumption {\bf (A3(1))} implies the existence of  a constant $C>0$ such that
 for the various choices of $\varphi$ and $f_l^k$,
$\sum_{l\geq 1} \sum_{|\alpha|=m}| B(l,\alpha)| d\<M^l\>_t  \leq C \|Z(t)\|_m^2 dV_t$ for every $t\in [0,T]$.
Therefore, we deduce that we can find a  constant $K>0$ such that   $dT_3(t)\leq K \|Z(t)\|_m^2 dV_t$.  The above inequalities and \eqref{decomp1}
complete the proof of Condition {\bf (C1)}.

Since $L_r$ are linear operators, Condition {\bf (C1)} implies the following classical Monotonicity, Coercivity and
Hemicontinuity: for every   $Z, \zeta \in H^{m+1}$ and $L_{r,\lambda}$ defined by \eqref{Lrl},
\begin{align*}
&2\sum_{r=0}^{d_1}  \< L_r Z-L_r \zeta  \, ,\, Z-\zeta \>_m  dV^r_t + \sum_{l\geq 1}
\|S_l(Z)-S_l(\zeta)\|_m^2 d\<M^l\>_t \leq K \|Z-\zeta\|_m^2   dV_t , \\
&2\!\sum_{r=0}^{d_1}  \< L_{r,\lambda} Z  \, ,\, Z \>_m  dV^r_t + \sum_{l\geq 1}
\|S_l(Z)\|_m^2 d\<M^l\>_t +2\lambda \|Z\|_{m+1}^2\! \sum_{r=0}^{d_1}\!\! \!\! dV^r_{t} \leq K \|Z\|_m^2   dV_t
\end{align*}
a.s. in the sense of measures,  and for $Z_i\in H^{m+1}$, $i=1,2,3$, $r=0, \cdots, d_1$ and $\lambda >0$, the  map
$ a\in \CC \to \< L_{r,\lambda} (Z_1+aZ_2 \, ,\, Z_3\>_m$ is continuous.

The following condition {\bf (C2)} gathers some useful  
bounds on the operators $L_r$ and $S_l$ for $0\leq r\leq d_1$ and  $l\geq 1$.
\smallskip

\noindent {\bf Condition (C2)} There exist positive
  constants $K_i, i=2,3,4$ such that for $Z\in H^{m+1}$, $\lambda\in [0,1]$, $r=0, \cdots,
d_1$ and $l\geq 1$:
\begin{align}
& 2 \|L_{r,\lambda} Z\|_{m-1}^2 + \|S_lZ\|_m^2 \leq K_2 \|Z\|_{m+1}^2 \; \mbox{\rm  a.s.} \label{linear} \\
&
 |(S_l Z  ,Z)_m|\leq K_3 \|Z\|^2_m \; \mbox {\rm and } |(S_l Z ,G_l)_m|\leq K_4 \|Z\|_m \|G_l\|_{m+1}
 \; \mbox{\rm  a.s.} . \label{majoSl}
\end{align}
The inequality  \eqref{linear} is a straightforward consequence of the Cauchy-Schwarz inequality
 and of Assumption {\bf (A3(m))}.
Using integration by parts and Assumptions {\bf (A3(m))-(A4(m,p))}, we deduce that if $G_l(t)=G_{l,1}(t)+iG_{l,2}(t)$,
\begin{align*}
|(S_l Z,Z)_m|\leq& \frac{1}{2} \sum_{|\alpha|=m} \sum_{j=1}^{d}
\big|  \big[ \big( D_j \sigma_l^j D^\alpha X , D^\alpha X \big)
+  \big( D_j \sigma_l^j D^\alpha Y , D^\alpha Y \big)\big]\big| \\
& + \sum_{|\alpha|\vee|\beta|\leq m} \big| \big( C(\alpha, \beta, l) D^\alpha Z , D^\beta Z\big)\big|
\end{align*}
\begin{align*}
|(S_l Z,G_l)_m|\leq& \sum_{j=1}^{d}
\sum_{|\alpha|=m} \big[ \big| \big( \sigma_l^j  D^\alpha X , D_j D^\alpha G_{l,1} \big)\big|
+ \big| \big( \sigma_l^j D^\alpha Y , D_j D^\alpha G_{l,2}\big)\big|\\
& +  \sum_{|\alpha|\vee|\beta|\leq m} \big| \big( C(\alpha, \beta, l) D^\alpha Z , D^\beta G_l\big) )\big| ,
\end{align*}
for constants  $C(\alpha,\beta,l)$, $\alpha,\beta,l$ such that $\sup_{\alpha,\beta,l}C(\alpha,\beta,l)\leq C<\infty$.
 Hence a simple application of the Cauchy-Schwarz and Young inequalities implies inequality \eqref{majoSl}.

We then proceed as in the proof of Theorem~3.1 in \cite{KR-82} for fixed $\lambda >0$ (see also \cite{Pa} and \cite{GK}).
To ease notations, we do not write the Galerkin approximation as  the following estimates would be valid
 with constants which do not
depend on the dimension of the Galerkin approximation, and hence would still be true for the weak and weak$^\ast$ limit in
$L^2([0,T]\times \Omega ; H_{m+1})$ and $ L^2(\Omega ; L^\infty(0,T ; H_m))$.
Let us fix  a real number $N>0$ and let $\tau_N=\inf\{t\geq 0 : \|Z^\lambda(t)\|_m\geq N\}\wedge T$.
The It\^o formula, the stochastic parabolicity condition {\bf (C1)} and  the Davies inequality
imply that for any $t\in [0,T]$ and $\lambda\in (0,1]$,
\begin{align*}
& \EE\Big(\sup_{s\in [0,t]} \|Z^\lambda(s \wedge \tau_N)\|_m^2\Big) + 2\lambda
\EE\int_0^{t\wedge \tau_N} \|Z^\lambda(s )\|_{m+1}^2 ds
 \leq \EE \|Z_0\|_m^2\\
& \quad + 2 \sum_{r=0}^{d_1} \EE\int_0^{t} \big| \< F_r(s\wedge \tau_N) , Z^\lambda(s\wedge \tau_N) \>_m\big|  dV^r_s \\
&\quad + \sum_{l\geq 1} \EE\int_0^t \big[ 2\big|\big(S_l Z^\lambda(s\wedge\tau_N) , G_l(s\wedge\tau_N)\big)_m\big|
+ \|G_l(s\wedge \tau_N)\|_m^2  \big] d\<M^l\>_s \\
&\quad  + 6\; \EE\Big(\sum_{l\geq 1}  \Big\{ \int_0^t \big( S_l Z^\lambda(s\wedge \tau_N ) + G_l(s\wedge \tau_N) ,
 Z^\lambda (s\wedge \tau_N )\big)_m^2
d \<M^l\>_s\Big\}^{\frac{1}{2}}\Big)
\end{align*}
The Cauchy-Schwarz inequality, the upper estimate \eqref{majoSl} in  Condition {\bf (C2)}
 and inequalities \eqref{V1} - \eqref{V} in Assumption {\bf (A1)}
 imply the existence of some constant $K>0$ such that for any $\delta >0$,
\begin{align*}
& \EE\Big(\sup_{s\in [0,t]} \|Z^\lambda(s \wedge \tau_N)\|_m^2\Big) + 2\lambda
\EE\int_0^{t\wedge \tau_N} \|Z^\lambda(s )\|_{m+1}^2 ds
 \leq \EE \|Z_0\|_m^2\\
&\quad +  3 \; \delta  \EE\Big(\sup_{s\in [0,t]} \|Z^\lambda(s \wedge \tau_N)\|_m^2\Big)
+ {\tilde{K}}{\delta}^{-1} \sum_{r=0}^{d_1}  \EE\int_0^{t }  \|F_r(s)\|_m^2  dV^r_s \\
&\quad + \EE\int_0^t   \sum_{l\geq 1} K \big[ {\delta}^{-1} + 1\big]
 \|G_l(s)\|_{m+1}^2 
 d\<M^l\>_s  + \EE  \int_0^t \|Z^\lambda(s\wedge \tau_N)\|_m^2 dV_s.
\end{align*}
For $\delta =\frac{1}{6}$,  the Gronwall Lemma implies  that for some constant $C$ we have for all $N>0$ and $\lambda\in (0,1]$,
\[  \EE\Big(\sup_{s\in [0,t]} \|Z^\lambda(s \wedge \tau_N)\|_m^2\Big) + \lambda \EE\int_0^{t\wedge \tau_N}
 \|Z^\lambda (s )\|_{m+1}^2 ds
\leq C \EE \big(  \|Z_0\|_m^2 + K_m(T)\big).\]
As $N\to\infty$, we deduce that $\tau_N\to\infty$ a.s. and by the monotone convergence theorem,
\[  \EE\Big(\sup_{s\in [0,T]} \|Z^\lambda(s)\|_m^2\Big) + \lambda \EE\int_0^{T} \|Z^\lambda (s )\|_{m+1}^2 ds
\leq C \EE \big(  \|Z_0\|_m^2 + K_m(T)\big).\]
Furthermore, $Z^\lambda$ belongs a.s. to ${\mathcal C}([0,T], H^m)$. As in \cite{KR-82}, we deduce the existence of a sequence
$\lambda_n\to 0$
such that  $Z^{\lambda_n} \to Z$ weakly in $L^2([0,T]\times \Omega ; H^m)$.
 Furthermore, $Z$ is a solution to \eqref{Z} and \eqref{CIZ}
such that \eqref{apriori1} holds and is a.s. weakly continuous from $[0,T]$ to $H^m$.

The uniqueness of the solution follows from the growth condition \eqref{linear} in  {\bf (C2)} and the
monotonicity condition which is deduced from the stochastic parabolicity property  {\bf (C1)}.

(ii)  Suppose that Assumption {\bf (A4(m,p))} holds for $p\in [2,\infty)$. Set  $p=2 \tilde{p}$ with
$\tilde{p}\in [1,\infty)$;  the It\^o formula,
the stochastic parabolicity condition {\bf (C1)}, the growth conditions {\bf (C2)},
 the Burkholder-Davies-Gundy and Schwarz inequalities
yield  the existence of some constant $C_p$ which also depends in the various constants in Assumptions
{\bf (A1)-(A4(m,p))}, and conditions {\bf (C1)-(C2)}, such that:
\begin{align*}
& \EE\Big(\! \sup_{s\in [0,t]} \|Z(s)\|_m^{p} \Big) \leq  C_p \Big[\! \EE \|Z_0\|_m^{p}
 +  \EE\Big(\! \sup_{s\in [0,t]} \|Z(s)\|_m^{\tilde{p}} \Big| \sum_{r=0}^{d_1}\!
\int_0^t \!\|F_r(s)\|_m dV^r_s \Big|^{\tilde{p}}\Big) \\
&\quad + \EE \Big(\!  \sup_{s\in [0,t]} \|Z(s)\|_m^{\tilde{p}}
 \Big| \sum_{l\geq 1}\! \int_0^t \!\|G_l(s)\|_{m+1} d\<M^l\>_s \Big|^{\tilde{p}} \Big) \\
&\quad + \EE \Big( \! \sup_{s\in [0,t]} \|Z(s)\|_m^{\tilde{p}}
 \Big| \sum_{l\geq 1} \!\int_0^t \!\big[ \|Z(s)\|_m^2 + \|G_l(s)\|_m^2 \big] d\<M^l\>_s
\Big|^{\tilde{p}/2}  \Big) .
\end{align*}
Using the H\"older and Young inequalities, \eqref{bndpini}  as well as Assumptions {\bf (A1)} we deduce
the existence of a constant $K>0$ such that for any $\delta >0$
 \begin{align*}
& \EE\Big(\sup_{s\in [0,t]} \|Z(s)\|_m^{p} \Big) \leq  3\delta  \EE\Big(\sup_{s\in [0,t]} \|Z(s)\|_m^{p} \Big)
+ C(p) \Big[ \EE \|Z_0\|_m^{p} \\
&\quad + {K}^{\tilde{p}} \delta^{-1} \EE\Big| \int_0^t \sum_{r=0}^{d_1} \|F_r(s)\|_m^2 dV^r_s\Big|^{\tilde{p}}
+  {K}^{\tilde{p}} \delta^{-1} \EE\Big| \int_0^t \sum_{l\geq 1} \|G_l(s)\|_{m+1}^2 d\<M^l\>_s \Big|^{\tilde{p}}\\
&\quad +  {K}^{\tilde{p}-1} \delta^{-1} \EE\int_0^t \|Z(s)\|_m^{p} dV_s
+ \delta^{-1} \Big|\EE\int_0^t \sum_{l\geq 1} \|G_l(s)\|_m^2 d\<M^l\>_s \Big|^{\tilde{p}} \Big] .
\end{align*}
Let $\delta = \frac{1}{6}$
 and introduce the stopping time $\tau_N=\inf\{ t\geq 0 : \|Z(t)\|_m\geq N\}\wedge T$. Replacing $t$ by $t\wedge \tau_N$
in the above upper estimates, the Gronwall Lemma and \eqref{bndpini} prove that there exists a constant $C$ such that
$ \EE \Big( \sup_{s\in [0,t]\wedge \tau_N} \|Z(s)\|_m^{2p} \Big)
 \leq C \EE \big( \|Z_0\|_m^{p} + K_m(T)^{p/2}\big)$ for every $N>0$.
As $N\to \infty$  the monotone convergence theorem concludes the proof of \eqref{apriori2}.
This ends of the proof of Theorem \ref{Hmestimates}.
\end{proof}
\begin{rk} 
If $a^{j,k}_r(t,x)=0$, for example for the Schr\"odinger equation, Assumption {\bf (A2)} implies that $\sigma_l^j= 0$.
\end{rk} 

\subsection{Further a priori estimates on the difference}
Theorem \ref{Hmestimates}  is used to upper
estimate moments of the difference of two processes solutions to equations of type \eqref{Z}. For $\ee=0,1$,
$r=0, \cdots,\,  d_1$, $l\geq 1$, $j,k=1,\cdots,d$,  let $a^{j,k}_{\ee,r}(t,x), b^{j,k}_{\ee,r}(t), a^j_{\ee,r}(t,x)$,
$ a_{\ee,r}(t,x), b_{\ee,r}(t,x),
\s_{\ee,l}^j(t,x),  \s_{\ee,l}(t,x), \tau_{\ee,l}(t,x)$   be coefficients,  $F_{\ee,r}(t,x)$,
$G_{\ee,l}(t,x)$ be processes, and  let $Z_{\ee,0}$ be random variables  which satisfy
the assumptions {\bf (A1)}- {\bf (A3(m))} and  {\bf (A4(m,p))} for some $m\geq 1$, $p\in [2,\infty)$,  the
same martingales  $(M_{t}^l, t\in [0,T])$ and increasing processes $(V^r_{t}, t\in [0,T])$.
Let $L_{\ee,r}$ and $S_{\ee,l}$ be defined as in \eqref{Lr} and \eqref{Sl} respectively.
Extend these above  coefficients, operators,  processes and random variables to $\ee\in [0,1]$ as follows: if $f_0$ and
$f_1$ are given, for $\ee\in [0,1]$, let $f_\ee=\ee f_1 + (1-\ee) f_0$. Note that by convexity, all the previous assumptions
are satisfied for any $\ee\in [0,1]$. Given $\ee\in [0,1]$, let $Z_\ee$ denote the solution to the evolution
equation:  $Z_\ee(0,x)=Z_{\ee,0}(x)$ and
\begin{equation} \label{Zee}
dZ_\ee(t,x)=\sum_{r=0}^{d_1} \!\!\! \big[ L_{\ee,r} Z_\ee(t,x) + F_{\ee,r}(t,x)]\, dV^r_t
 + \sum_{l\geq 1} \big[ S_{\ee,l} Z_\ee(t,x) + G_{\ee,l}(t,x)\big] dM^l_t.
\end{equation}
Thus, Theorem \ref{Hmestimates} immediatly yields the following
\begin{corollary} \label{corZee}
With the notations above, the solution $Z_\ee$ to \eqref{Zee} with the initial condition $Z_{\ee,0}$ exists
and is unique with trajectories in $C([0,T];H^{m-1}) \cap L^\infty(0,T ; H^m)$. Furthermore,
the trajectories of $Z_\ee$ belong a.s. to
$C_{\textrm{w}}([0,T];H^{m})$ and
there exists a constant $C_p>0$ such that
\begin{equation}\label{aprioriee}
\sup_{\ee\in [0,1]}\, \EE\Big(\sup_{t\in [0,T]} \|Z_\ee(t,\cdot)\|_m^p \Big)
 \leq C_p\, \sup_{\ee\in \{0,1\}} \EE\Big(\|Z_{\ee,0}\|^p_m + K_m(T)^{p/2}\Big)<\infty .
\end{equation}
\end{corollary}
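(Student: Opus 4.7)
The plan is to reduce the corollary directly to Theorem \ref{Hmestimates} by checking that for every $\ee\in[0,1]$ the linearly interpolated data still satisfies the full list of hypotheses {\bf(A1)}--{\bf(A4(m,p))}, with bounds that do not depend on $\ee$. The interpolated martingales and increasing processes are unchanged, so {\bf(A1)} is immediate. For {\bf(A3(m))} and the pathwise boundedness part of {\bf(A4(m,p))} we simply use $|\ee f_1+(1-\ee)f_0|\le|f_0|\vee|f_1|$ on each derivative, so all sup-norm bounds are preserved with the same constants.

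The only nontrivial point is the stochastic parabolicity bound {\bf(A2)}. Since $a^{j,k}_{\ee,r}$ is linear in $\ee$, the first term in \eqref{coercive} is exactly the convex combination of the corresponding terms for $\ee=0$ and $\ee=1$. For the quadratic term, write $\sum_{j,k}y^jy^k\sigma_{\ee,l}^j\sigma_{\ee,l}^k=\bigl(\sum_jy^j\sigma_{\ee,l}^j\bigr)^{2}$ and apply convexity of $x\mapsto x^{2}$:
\begin{equation*}
\Bigl(\sum_j y^j\sigma_{\ee,l}^j\Bigr)^{2}\le \ee\Bigl(\sum_j y^j\sigma_{1,l}^j\Bigr)^{2}+(1-\ee)\Bigl(\sum_j y^j\sigma_{0,l}^j\Bigr)^{2}.
\end{equation*}
Combining with the linearity in $a$, the left side of \eqref{coercive} for the index $\ee$ is bounded below by $\ee$ times the corresponding quantity for $\ee=1$ plus $(1-\ee)$ times the one for $\ee=0$, both of which are nonnegative by assumption. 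Hence {\bf(A2)} holds for every $\ee\in[0,1]$.

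For the integrability condition in {\bf(A4(m,p))}, note that $F_{\ee,r}=\ee F_{1,r}+(1-\ee)F_{0,r}$ and $G_{\ee,l}=\ee G_{1,l}+(1-\ee)G_{0,l}$, so by the triangle inequality in $H^{m}$ and $H^{m+1}$ we have $\|F_{\ee,r}(s)\|_m^{2}\le 2(\|F_{0,r}(s)\|_m^{2}+\|F_{1,r}(s)\|_m^{2})$ and similarly for $G_{\ee,l}$. Therefore the process $K_m$ associated with $\ee$ is dominated by twice the sum of the corresponding processes for $\ee=0,1$, and an analogous bound holds for $\|Z_{\ee,0}\|_m^{p}$. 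In particular, $\EE\bigl(\|Z_{\ee,0}\|_m^p+K_m(T)^{p/2}\bigr)$ is controlled, uniformly in $\ee\in[0,1]$, by a constant multiple of $\max_{\ee\in\{0,1\}}\EE\bigl(\|Z_{\ee,0}\|_m^p+K_m(T)^{p/2}\bigr)$.

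With these verifications in hand, Theorem \ref{Hmestimates} applied separately for each $\ee\in[0,1]$ delivers existence, uniqueness, the regularity of the trajectories in $C([0,T];H^{m-1})\cap L^{\infty}(0,T;H^{m})$ and the weak continuity into $H^{m}$, together with the a priori estimate \eqref{apriori2}. Since the constant $C_p$ in that estimate depends only on the constants appearing in {\bf(A1)}--{\bf(A3(m))} and {\bf(C1)}--{\bf(C2)}, all of which are $\ee$-independent by the above, taking the supremum over $\ee\in[0,1]$ yields \eqref{aprioriee}. The only potential obstacle is the stochastic parabolicity under interpolation, which is handled by the convexity argument above; everything else is a routine triangle/convexity estimate.
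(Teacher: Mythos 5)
Your proposal is correct and follows essentially the same route as the paper: the text preceding the corollary simply observes that "by convexity" Assumptions {\bf(A1)}--{\bf(A4(m,p))} persist under the linear interpolation $f_\ee=\ee f_1+(1-\ee)f_0$ (the only delicate point being \eqref{coercive}, handled exactly by your convexity-of-the-square argument for $\bigl(\sum_j y^j\sigma^j_{\ee,l}\bigr)^2$), and then applies Theorem \ref{Hmestimates} with $\ee$-independent constants. Your write-up just makes explicit the verification the paper leaves implicit.
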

Following the arguments in \cite{GK}, this enables us to estimate moments of $Z_1-Z_0$ in terms of a process
$\zeta_\ee$ which is a formal derivative of $Z_\ee$ with respect to $\ee$.
Given operators or processes $f_\ee$, $\ee\in \{0,1\}$, set  $f'=f_1-f_0$.
\begin{theorem}\label{diff1}
Let $m\geq 3$, and $p\in [2,\infty)$. Then for any integer $\kappa =0, \cdots, m-2$
\begin{equation} \label{majodiffv}
\EE\Big( \sup_{t\in [0,T]} \|Z_1(t)-Z_0(t)\|_{\kappa}^p \Big)
\leq \sup_{\ee\in [0,1]} \EE\Big(\sup_{t\in [0,T]} \|\zeta_\ee(t)\|_\kappa^p\Big) ,
\end{equation}
where $\zeta_\ee$ is the unique solution to the following linear  evolution equation:
\begin{align} \label{zetaee}
d\zeta_\ee(t) = & \sum_{r=0}^{d_1} \big( L_{\ee,r}  \zeta_\ee(t,x) + L'_r Z_\ee(t,x) + F'_r(t,x)\big) dV_r(t)
\nonumber \\
& + \sum_{l\geq 1} \big( S_{\ee,l} \zeta_\ee(t,x) + S'_l Z_\ee(t,x) + G'_l(t,x)\big) dM_t^l\, ,
\end{align}
with the initial condition $Z'_0=Z_1-Z_0$. Furthermore,
\begin{equation}\label{apriorizetaee}
\sup_{\ee\in [0,1]} \EE\Big( \sup_{t\in [0,T]} \|\zeta_\ee(t)\|_{m-2}^p\Big) <\infty.
\end{equation}
\end{theorem}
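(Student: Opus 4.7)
The strategy is to interpret $\zeta_\ee$ as the formal derivative $\partial_\ee Z_\ee$ with respect to the interpolation parameter, establish the fundamental-theorem-of-calculus identity
\begin{equation*}
Z_1(t) - Z_0(t) = \int_0^1 \zeta_\ee(t)\,d\ee \qquad \text{in } H^\kappa,
\end{equation*}
and then deduce \eqref{majodiffv} from it via Bochner--Minkowski followed by Jensen.

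The a priori bound \eqref{apriorizetaee} is the more routine part. For fixed $\ee \in [0,1]$ the equation \eqref{zetaee} is a linear SPDE of the form \eqref{Z} with second-order operator $L_{\ee,r}$, first-order operator $S_{\ee,l}$, free terms $\widetilde F_r := L'_r Z_\ee + F'_r$ and $\widetilde G_l := S'_l Z_\ee + G'_l$, and initial datum $Z_1(0) - Z_0(0) \in H^m$. Convexity in $\ee$ preserves Assumptions (A1)--(A3(m)). Since the coefficients of $L'_r$ are controlled in $H^{m+1}$ and those of $S'_l$ in $H^m$ by (A3(m)), one has
\begin{equation*}
\|\widetilde F_r(t)\|_{m-2} \leq C(\|Z_\ee(t)\|_m + \|F'_r(t)\|_{m-2}), \quad \|\widetilde G_l(t)\|_{m-1} \leq C(\|Z_\ee(t)\|_m + \|G'_l(t)\|_{m-1}).
\end{equation*}
Combined with the $p$-th moment bound \eqref{aprioriee} of Corollary \ref{corZee} and the integrability of $F'_r$, $G'_l$ inherited from (A4(m,p)), this shows that $(\widetilde F_r, \widetilde G_l)$ satisfies the hypothesis of Theorem \ref{Hmestimates} at level $m-2 \geq 1$. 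Applying that theorem yields \eqref{apriorizetaee} together with the analogous bound on $\|\zeta_\ee\|_\kappa$ uniformly in $\ee$ for every $\kappa \in \{0, \ldots, m-2\}$.

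The crux, and main technical obstacle, is the identity $Z_1 - Z_0 = \int_0^1 \zeta_\ee\,d\ee$. Following \cite{GK} I would set $\widetilde Z_\ee(t) := Z_0(t) + \int_0^\ee \zeta_s(t)\,ds$, observe that $\widetilde Z_\ee(0) = Z_0(0) + \ee(Z_1(0) - Z_0(0)) = Z_\ee(0)$, and aim to show that $\widetilde Z_\ee$ solves \eqref{Zee} via a stochastic Fubini exchange (justified by the a priori estimates just established). Exploiting the affine dependence $L_{\ee,r} = L_{0,r} + \ee L'_r$, $F_{\ee,r} = F_{0,r} + \ee F'_r$ (similarly for $S_{\ee,l}$, $G_{\ee,l}$) together with the deterministic Fubini rearrangement
\begin{equation*}
L_{\ee,r}\int_0^\ee \zeta_s\,ds - \int_0^\ee L_{s,r}\zeta_s\,ds = \int_0^\ee (\ee-s)\, L'_r \zeta_s\,ds = L'_r\int_0^\ee\int_0^s \zeta_u\,du\,ds,
\end{equation*}
a direct computation shows that $W_\ee := \widetilde Z_\ee - Z_\ee$ satisfies a linear homogeneous SPDE with drift $L_{\ee,r} W_\ee - \int_0^\ee L'_r W_s\,ds$, an analogous martingale part, and vanishing initial datum. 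The non-local term $\int_0^\ee L'_r W_s\,ds$ is the true difficulty; it is handled by applying an a priori bound in the spirit of Theorem \ref{Hmestimates} at a sufficiently low Sobolev level and running a Gronwall argument in the parameter $\ee$, which forces $W_\ee \equiv 0$ for every $\ee \in [0,1]$.

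Given the identity, the Bochner--Minkowski inequality gives $\|Z_1(t) - Z_0(t)\|_\kappa \leq \int_0^1 \|\zeta_\ee(t)\|_\kappa\,d\ee$, whence Jensen's inequality on the probability space $([0,1], d\ee)$ yields
\begin{equation*}
\sup_{t\in[0,T]}\|Z_1(t) - Z_0(t)\|_\kappa^p \leq \int_0^1 \sup_{t\in[0,T]}\|\zeta_\ee(t)\|_\kappa^p\,d\ee.
\end{equation*}
Taking expectation and exchanging integrals by Fubini produces the required bound by $\sup_{\ee \in [0,1]} \EE\sup_{t\in[0,T]}\|\zeta_\ee(t)\|_\kappa^p$, which is \eqref{majodiffv}.
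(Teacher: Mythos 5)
Your treatment of \eqref{apriorizetaee} is exactly the paper's: view \eqref{zetaee} as a linear equation with free terms $\tilde F_r=L'_rZ_\ee+F'_r$, $\tilde G_l=S'_lZ_\ee+G'_l$, check {\bf (A4(m-2,p))} via \eqref{aprioriee}, and invoke Theorem \ref{Hmestimates}. The final Minkowski--Jensen step would also be fine \emph{if} you had the identity $Z_1(t)-Z_0(t)=\int_0^1\zeta_\ee(t)\,d\ee$. The gap is precisely at the step you call the crux: your Gronwall-in-$\ee$ argument for $W_\ee\equiv 0$ does not close. The equation you derive for $W_\ee$ has the non-local free terms $\int_0^\ee L'_rW_s\,ds$ and $\int_0^\ee S'_lW_s\,ds$, and any a priori estimate of the type of Theorem \ref{Hmestimates} at Sobolev level $\kappa$ bounds $\EE\sup_t\|W_\ee(t)\|_\kappa^p$ by $C\int_0^\ee\EE\sup_t\|W_s(t)\|_{\kappa+2}^p\,ds$: the free term costs two derivatives, and in this setting (degenerate parabolicity, Schr\"odinger-type operators) there is no smoothing to recover them. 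Running the estimate ``at a sufficiently low Sobolev level'' does not help, since the right-hand side is always two levels higher; running it at the top available level $m-2$ needs $\|W_s\|_m$, which is not controlled because $\zeta_s$ is only in $H^{m-2}$. Interpolating $\|W_s\|_{\kappa+2}$ between $\|W_s\|_\kappa$ and the uniformly bounded $\|W_s\|_{m-2}$ only produces an inequality of the form $u(\ee)\le C\int_0^\ee u(s)^{\alpha}ds$ with $\alpha<1$, which does \emph{not} force $u\equiv0$ (e.g.\ $u(\ee)=c\,\ee^{1/(1-\alpha)}$ satisfies it). A finite iteration through the levels $0,2,4,\dots$ stops at $m-2$ and yields only a power bound in $\ee$, not vanishing. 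In addition, even granting the identity, writing $\int_0^1\zeta_\ee(t)\,d\ee$ as a Bochner integral requires a joint measurability in $\ee$ that you have not established.

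The paper circumvents exactly this obstruction by never proving the exact identity. It proves only the approximate statement \eqref{delta}: $\delta_hZ_\ee=(Z_{\ee+h}-Z_\ee)/h\to\zeta_\ee$ in $L^p(\Omega;L^\infty_t;H^0)$ for each fixed $\ee$. There the same loss of two derivatives occurs (the free term of $\eta_{\ee,h}=\delta_hZ_\ee-\zeta_\ee$ is $L'_r(Z_{\ee+h}-Z_\ee)$), but it is compensated by smallness in $h$: one interpolates $\|Z_{\ee+h}-Z_\ee\|_2\le C\|Z_{\ee+h}-Z_\ee\|_0^{1/3}\|Z_{\ee+h}-Z_\ee\|_3^{2/3}$, uses Corollary \ref{corZee} with $m=3$ (this is where $m\ge3$ enters), and shows $\EE\sup_t\|Z_{\ee+h}-Z_\ee\|_0^p=O(h^{p})$ from the equation for $\Phi_{\ee,h}$, so the error tends to $0$ even though no exact cancellation is available at fixed $h$. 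The passage to \eqref{majodiffv} then avoids both the integral identity and measurability in $\ee$, via the telescoping sum $Z_1-Z_0=\frac1N\sum_{k=0}^{N-1}\delta_{1/N}Z_{k/N}$, the regularizing operators $R_n=n^\kappa\Delta^\kappa(n\mathrm{Id}-\Delta)^{-\kappa}$ to lift the $H^0$-convergence to $H^\kappa$, and Fatou's lemma. To repair your proof you would either have to import this difference-quotient argument (at which point your FTC identity becomes superfluous) or supply a genuinely new mechanism to close the estimate for $W_\ee$; as written, the claim ``Gronwall in $\ee$ forces $W_\ee\equiv0$'' is unsupported.
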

\begin{proof}  
Using \eqref{aprioriee} we deduce that the processes $\tilde{F}_r(t,x)=L'_r Z_\ee(t,x) + F'_r(t,x)$ and
$\tilde{S}_l(t,x)=S'_l Z_\ee(t,x) + G'_l(t,x)$ satisfy the assumption  {\bf (A4(m-2,p))} with $m-2\geq 1$. Hence the existence
and uniqueness of the process $\zeta_\ee$, solution to \eqref{zetaee}, as well as \eqref{apriorizetaee}
 can be deduced from Theorem \ref{Hmestimates}.

We now prove \eqref{majodiffv} for $\kappa \in \{0, \cdots, m-2\}$ and assume that the right hand-side is finite.
Given  $(f_\ee , \ee\in [0,1))$, for  and $h>0$ and  $\ee\in [0,1]$ such that
 $\ee+h\in [0,1]$, set $\delta_h f_\ee=(f_{\ee+h}-f_\ee)/h$.
We at first prove that \eqref{majodiffv} can be deduced  from the following: for every $\ee\in [0,1)$, as $h\to 0$
is such that $h+\ee\in[0,1]$,
\begin{equation}\label{delta}
\EE\Big( \sup_{t\in [0,T]} \| \delta_h Z_\ee(t) - \zeta_\ee(t)\|_0^p\Big) \to 0.
\end{equation}
Indeed,  assume that \eqref{delta} holds and for $n>0$   let
$R_n=n^\kappa \Delta^\kappa (n\mbox{\rm Id} - \Delta)^{-\kappa} $  denote the $kapa$-fold composition of the
resolvent  of the Laplace operator $\Delta$ on the space $L^2=H^0$.
Then, by some classical estimates,  there exists a constant
$C(\kappa)>0$  such that for any $\phi \in L^2$, $\|R_n h\|_\kappa \leq C(\kappa) \|\phi\|_0$.
Hence \eqref{delta} yields that for every $n>0$, as $h\to 0$ with $\ee+h\in [0,1]$, we have
 $\EE\Big( \sup_{t\in [0,T]} \|\delta_h R_n Z_\ee(t) - R_n \zeta_\ee(t)\|_\kappa^p\Big) \to 0$.
 Furthermore, since for every integer $N\geq 1$, we  have
$Z_1-Z_0=\frac{1}{N}\sum_{k=0}^{N-1} \delta_{1/N} Z_{k/N} \leq \sup_{\ee\in [0,1]}
\delta_{1/N} Z_\ee$,   we deduce that for every $n>0$ and $p\in [ 2,\infty)$:
\[ \EE\Big( \sup_{t\in [0,T]} \|R_n Z_0(t) - R_n Z_1(t)\|_\kappa^p\Big) \leq \sup_{\ee\in [0,1]} \EE \Big( \sup_{t\in [0,T]}
\| R_n \zeta_\ee(t)\|_\kappa^p\Big).\]
Finally, if $\phi \in H^0$ is such that $\liminf_{n\to \infty} \|R_n\phi\|_\kappa =N_\kappa <\infty$, then $\phi \in H^\kappa$
and $\|\phi\|_\kappa \leq N_\kappa$. Thus, by applying the Fatou Lemma and using estimate \eqref{apriorizetaee} we can conclude the proof of \eqref{majodiffv}.

We will now prove the convergence \eqref{delta}. It is easy to see that the process $\eta_{\ee,h}(t,\cdot):=\delta_h Z_\ee(t,\cdot) - \zeta_\ee(t,\cdot)$
has initial condition $\eta_{\ee,h}(0)=0$, and is a solution of the evolution equation:
\begin{align*}
d\eta_{\ee,h}(t)=& \sum_{r=0}^{\;d_1} \big[ L_{\ee,r} \eta_{\ee,h}(t,\cdot) + L'_r \big(Z_{\ee +h}(t,\cdot) - Z_\ee(t,\cdot)\big) \big] dV^r_t\\
&+ \sum_{l\geq 1} \big[ S_{\ee,l} \eta_{\ee,h}(t,\cdot) + S'_l\big( Z_{\ee +h}(t,\cdot) - Z_\ee(t,\cdot)\big) \big] dM^l_t.
\end{align*}
Hence, using once more Theorem \ref{Hmestimates}, we deduce the existence of a constant $C_p>0$ independent of $\ee\in [0,1)$
 and $h>0$, such that  $\ee+h \in [0,1]$,
 \[
\EE\Big( \sup_{t\in [0,T]}  \|\delta_h Z_\ee(t) - \zeta_\ee(t)\|_0^p  \Big)
\leq C_p \EE\Big( \int_0^T \|Z_{\ee +h}(t) - Z_{\ee}(t)\|_2^2\,  dV_t\Big)^{p/2}.
\]
Using the  interpolation  inequality $\|\phi \|_2 \leq C \|\phi\|_0^{1/3} \|\phi\|_3^{2/3}$, see for instance
Proposition 2.3 in \cite{Lions+M1972}, the H\"older inequality
and the estimate \eqref{aprioriee} with $m=3$ from Corollary \ref{corZee}, we deduce that 
\[
\EE\Big( \sup_{t\in [0,T]} \|\delta_h Z_\ee(t) - \zeta_\ee(t)\|_0^p\Big)
\leq C \Big[ \EE\Big( \sup_{t\in [0,T]} \|Z_{\ee+h}(t) - Z_\ee(t)\|_0^p\Big) \Big]^{1/3}. \]
Finally, the process $\Phi_{\ee,h}(t,\cdot) = Z_{\ee+h}(t,\cdot) - Z_\ee(t,\cdot)$ is solution to  the evolution equation
\begin{align*}
d\Phi_{\ee,h}(t)=& \sum_{r=0}^{d_1} \big[ L_{\ee,r} \Phi_{\ee,h}(t,\cdot)
 + h  L'_r Z_{\ee +h}(t,\cdot) +h F'_r(t,\cdot) \big] dV^r_t\\
&+ \sum_{l\geq 1} \big[ S_{\ee,l} \Phi_{\ee,h}(t,\cdot) + h  S'_l Z_{\ee +h}(t,\cdot) + h G'_l(t,\cdot)\big) \big] dM^l_t,
\end{align*}
with the initial condition $\Phi_{\ee,h}(0)=h(Z_1-Z_0)$.  Thus,  \eqref{aprioriee} and Theorem  \ref{Hmestimates}
prove the existence of a constant $C$, which does not depend on $\ee\in [0,1)$ and $h>0$ with $\ee+h\in [0,1]$,  and such that
\[ \EE\Big( \sup_{t\in [0,T]} \|Z_{\ee+h}(t) - Z_\ee(t)\|_0^p\Big) \leq C  h^{p/3}. \]
This concludes the proof of \eqref{delta} and hence that of the Theorem \ref{diff1}.
\qed
\end{proof}

\section{Speed of convergence} \label{speedabstract}
\subsection{Convergence for time-independent coefficients} \label{speedtimeinde}
For  $r=0, \cdots, d_1$, $\ee=0,1$, let $(V_{\ee,t}^r, t\in [0,T])$ be increasing processes which satisfy
 Assumptions {\bf (A1)}, {\bf (A2)},  {\bf (A3(m+3))} and {\bf (A4(m+3,p))}
 for some integer $m\geq 1$, some $p\in[2,+\infty)$ separately for the increasing processes
$(V^r_{\ee,t}, t\in [0,T])$, the same increasing process $(V_t, t\in [0,T])$  and the initial conditions $Z_{\ee,0}$,  $\ee =0,1$.
 For $\ee=0,1$, let $Z_\ee$ denote the  solution to the evolution equation
\begin{equation} \label{Z01}
dZ_\ee(t,x)=\sum_{0\leq r \leq d_1} \big[ L_r Z_\ee(t,x) + F_r(x)]\, dV_{\ee,t}^r
 + \sum_{l\geq 1} \big[ S_l Z_\ee(t,x) + G_l(x)\big] dM^l_t,
\end{equation}
with the initial conditions $Z_0(0,\cdot)=Z_{0,0}$  and   $Z_1(0,\cdot)=Z_{1,0}$ respectively.
Let
\[ A:= \sup_{\omega\in \Omega} \, \sup_{t\in [0,T]}\, \max_{r=0,1, \cdots, d_1} |V^{r}_{1,t} - V^{r}_{0,t}|.\]
Then the $H^m$ norm of the difference $Z_1-Z_0$ can be estimated in terms of $A$ as follows
 when the coefficients of $L_{r}$   and $F_{r} $ are time-independent.
Indeed, unlike the statements in \cite{GM}, but as it is clear from the proof, the diffusion coefficients
$\sigma_{l}$ and $G_{l}$ can depend on time. 
 \begin{theorem} \label{Thsplitting1}
Let 
$L_r$ and $F_r$ be 
time-independent, $\FF_0$-measurable,
$V^{r}_{\ee}$, $\ee=0,1$, $M_l$ be as above and let Assumptions {\bf (A1)},
{\bf (A2)}, {\bf (A3(m+3))}
and {\bf (A4(m+3,p))} be satisfied for some $m\geq 0$ and some $p\in [2,+\infty)$. Suppose  furthermore
that
\begin{equation} \label{A6}
\EE \Big( \Big| \sum_{r=0}^{d_1} \|F_r\|_{m+1}^2\Big|^{p/2}  +
\sup_{s\in [0,T]} \Big| \sum_{l\geq 1} \|G_r(s)\|_{m+2}^2\Big|^{p/2}    \Big) <\infty.
\end{equation}
 Then there exists a constant $C>0$,
which only depends on $d$ and the constants in the above assumptions,  such that  the solutions
$Z_0$ and $Z_1$ to \eqref{Z01} satisfy the following inequality:
\[  
\EE\Big( \sup_{t\in [0,T]} \|Z_1(t)-Z_0(t)\|_m^p\Big) \leq C\, \Big( \EE(\|Z_{1,0} -Z_{0,0}\|^p_m) + A^p \Big).
\]  
\end{theorem}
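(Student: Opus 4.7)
Following the interpolation method of Theorem~\ref{diff1}, define $V^r_{\varepsilon,t}:=(1-\varepsilon)V^r_{0,t}+\varepsilon V^r_{1,t}$ and $Z_{\varepsilon,0}:=(1-\varepsilon)Z_{0,0}+\varepsilon Z_{1,0}$ for $\varepsilon\in[0,1]$, and let $Z_\varepsilon$ be the solution of \eqref{Z01} driven by the $V^r_\varepsilon$ with initial value $Z_{\varepsilon,0}$; Assumptions \textbf{(A1)} and \textbf{(A2)} are preserved by convex combination. Its formal $\varepsilon$-derivative $\zeta_\varepsilon$ satisfies
\[
d\zeta_\varepsilon = \sum_{r=0}^{d_1} L_r\zeta_\varepsilon\, dV^r_{\varepsilon,t} + \sum_{r=0}^{d_1}(L_r Z_\varepsilon+F_r)\, d(V^r_{1,t}-V^r_{0,t}) + \sum_{l\geq 1} S_l\zeta_\varepsilon\, dM^l_t,
\]
with $\zeta_\varepsilon(0)=Z_{1,0}-Z_{0,0}$. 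Repeating the finite-difference/resolvent-smoothing argument of Theorem~\ref{diff1} in this slightly modified setting (varying the $V^r$ rather than the coefficients) gives $\EE\sup_t\|Z_1(t)-Z_0(t)\|_m^p \leq \sup_\varepsilon\EE\sup_t\|\zeta_\varepsilon(t)\|_m^p$, reducing the problem to bounding $\zeta_\varepsilon$ uniformly in $\varepsilon$.

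The middle drift term above is dangerous because only the sup-norm, not the total variation, of $V^r_1-V^r_0$ is controlled by $A$. Following the idea of \cite{GK}, I absorb it by setting
\[
W_\varepsilon(t):=\sum_{r=0}^{d_1}(L_r Z_\varepsilon(t)+F_r)(V^r_{1,t}-V^r_{0,t}),\qquad \tilde\zeta_\varepsilon(t):=\zeta_\varepsilon(t)-W_\varepsilon(t),
\]
so that $\tilde\zeta_\varepsilon(0)=Z_{1,0}-Z_{0,0}$ (since $V^r_\varepsilon(0)=0$). Using the It\^o product rule, the time-independence of $L_r$ and $F_r$, and the equation \eqref{Z01} to expand $d(L_r Z_\varepsilon)=L_r\,dZ_\varepsilon$, the offending term cancels and $\tilde\zeta_\varepsilon$ satisfies an evolution equation of the same form as \eqref{Z} with the same leading operators and driving processes and with new forcings given schematically by
\[
\tilde F^\varepsilon_{r'}=-\sum_r (V^r_1-V^r_0)\bigl([L_r,L_{r'}]Z_\varepsilon - (L_{r'}F_r-L_r F_{r'})\bigr),
\]
\[
\tilde G^\varepsilon_l=-\sum_r (V^r_1-V^r_0)\bigl([L_r,S_l]Z_\varepsilon - (S_l F_r-L_r G_l)\bigr).
\]
The key observation is that the commutator $[L_r,L_{r'}]$ of two second-order operators is only third-order and $[L_r,S_l]$ only second-order, so $\|\tilde F^\varepsilon_{r'}\|_m \leq C\,A\,(\|Z_\varepsilon\|_{m+3}+\max_r\|F_r\|_{m+1})$ and $\|\tilde G^\varepsilon_l\|_{m+1} \leq C\,A\,(\|Z_\varepsilon\|_{m+3}+\max_r\|F_r\|_{m+2}+\|G_l\|_{m+2})$, which matches exactly the regularity afforded by Corollary~\ref{corZee} under \textbf{(A3(m+3))} and \textbf{(A4(m+3,p))}.

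Applying Theorem~\ref{Hmestimates} to $\tilde\zeta_\varepsilon$ (whose stochastic parabolicity is inherited from that of $Z_\varepsilon$ since the principal operators are unchanged), combined with Corollary~\ref{corZee} and the auxiliary pathwise moment bound \eqref{A6}, yields $\EE\sup_t\|\tilde\zeta_\varepsilon(t)\|_m^p \leq C\bigl(\EE\|Z_{1,0}-Z_{0,0}\|_m^p+A^p\bigr)$ uniformly in $\varepsilon$. The remainder $W_\varepsilon$ is bounded pathwise by $\|W_\varepsilon(t)\|_m \leq C\,A\,(\|Z_\varepsilon(t)\|_{m+2}+\max_r\|F_r\|_m)$, and Corollary~\ref{corZee} together with \eqref{A6} gives $\EE\sup_t\|W_\varepsilon(t)\|_m^p \leq C A^p$. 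Writing $\zeta_\varepsilon=\tilde\zeta_\varepsilon+W_\varepsilon$ then completes the proof. The main obstacle is spotting and exploiting the commutator cancellation: a naive expansion of $d(L_r Z_\varepsilon)$ would produce the fourth-order expressions $L_r L_{r'}Z_\varepsilon$ and thus demand $Z_\varepsilon\in H^{m+4}$, whereas the symmetric subtraction $\zeta_\varepsilon-W_\varepsilon$ reassembles them into the commutators $[L_r,L_{r'}]Z_\varepsilon$ that already live in $H^m$ when $Z_\varepsilon\in H^{m+3}$; a secondary point is checking that the It\^o product rule for the semimartingale $L_r Z_\varepsilon$ against the bounded-variation process $V^r_1-V^r_0$ does not contribute any problematic covariation term, which is routine under the standing hypotheses on the $V^r$.
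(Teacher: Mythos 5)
Your argument is correct in substance, but it handles the central difficulty by a genuinely different device than the paper. Both proofs start identically: interpolate $V^r_{\ee,t}=\ee V^r_{1,t}+(1-\ee)V^r_{0,t}$ (the paper recasts this as an interpolation of coefficients via the densities $\rho^r_{\ee,t}=dV^r_{\ee,t}/dV_t$, so that Theorem \ref{diff1} applies verbatim) and reduce to a bound on $\zeta_\ee$ solving \eqref{zetaeebis}, uniform in $\ee$. From there the paper works inside the energy identity: it applies the It\^o formula to $\|\zeta_\ee\|_m^2$, and the dangerous term $J_{\ee,t}=\sum_r\int_0^t(\zeta_\ee,L_rZ_\ee+F_r)_m\,dA^r_s$ is integrated by parts in time at the level of the scalar products (Lemma \ref{J-int}); because the resulting terms $\<L_{\tilde r}\zeta_\ee,L_rZ_\ee+F_r\>_m$ lose derivatives on $\zeta_\ee$, the paper needs the degenerate quadratic form $[\,\cdot\,]_m$ of Lemma \ref{Cm} and the bilinear estimate \eqref{q} of Lemma \ref{tech} to absorb them into the coercive term, plus a second integration by parts for the $J^4$ contribution. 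You instead perform the integration by parts at the level of the unknown, setting $\tilde\zeta_\ee=\zeta_\ee-\sum_r A^r_t(L_rZ_\ee+F_r)$; since $Z_\ee$ is driven by $dV^r_{\ee}$ and not by $dA^r$, the $dA^r$ terms cancel exactly, and the time-independence of $L_r,F_r$ leaves forcings built from the commutators $[L_{r'},L_r]Z_\ee$ (third order) and $[S_l,L_r]Z_\ee$ (second order), multiplied by $A^r$. All derivative loss is thus shifted onto $Z_\ee$, which Corollary \ref{corZee} controls in $H^{m+3}$, and a single application of Theorem \ref{Hmestimates} to the $\tilde\zeta_\ee$-equation (whose principal operators and parabolicity are unchanged) gives $\EE\sup_t\|\tilde\zeta_\ee\|_m^p\le C(\EE\|Z_{1,0}-Z_{0,0}\|_m^p+A^p)$, while $W_\ee$ is bounded pathwise by $CA(\|Z_\ee\|_{m+2}+\sum_r\|F_r\|_m)$. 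This is a leaner route: it dispenses with Lemmas \ref{Cm}, \ref{tech} and \ref{J-int} altogether. What the paper's route buys in exchange is that, by never differentiating $L_rZ_\ee$ in time explicitly but only estimating the symmetrized forms $q_{\tilde r}(\zeta,Z)$, it passes with little change to time-dependent coefficients (Theorem \ref{Thsplitting2} under {\bf (A5(m))}); your transformation would there generate additional terms $A^r\,(dL_r)Z_\ee$, $A^r\,(dF_r)$, which is manageable under {\bf (A5(m))} but must be added. Two small points to tidy: the norm indices in your bounds are slightly off ($\|L_{r'}F_r\|_m$ needs $\|F_r\|_{m+2}$ and $\|L_rG_l\|_{m+1}$ needs $\|G_l\|_{m+3}$, both available from {\bf (A4(m+3,p))} and \eqref{A6}, and the $l$-summation for $\tilde G_l$ uses $\sum_l d\<M^l\>_t\le dV_t$ together with the $K_{m+3}$ moment bound), and one should note, as the paper also tacitly does in \eqref{IPPJee}, that the product rule for $A^r_t(L_rZ_\ee(t)+F_r)$ is clean when the $V^r_\ee$ are continuous and that $\tilde\zeta_\ee$ has enough regularity to be identified, via uniqueness, with the variational solution to which the a priori estimate of Theorem \ref{Hmestimates} applies.
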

The proof of Theorem \ref{Thsplitting1} will require several steps. Some of them do not  depend on the fact that
the coefficients are time independent;  we are keeping general coefficients whenever this is possible.
 The first step is to use Theorem \ref{diff1}
and hence to define a process $Z_\ee$ for any $\ee\in [0,1]$; it does not depend on the fact that the coefficients
are time-independent and extends to the setting of the previous section. For $\ee\in [0,1]$, $r=0, \cdots d_1$,  $t\in [0,T]$
and $x\in \RR^d$, let
\[ V^r_{\ee,t} = \ee V^r_{1,t} + (1-\ee) V^r_{0,t} , \quad  \rho_{\ee,t}^r = dV^r_{\ee,t}/dV_t\]
and  for $j,k=1,\cdots d$,   set $a_{\ee,r}^{j,k}(t,x) =  \rho_{\ee,t}^r a_r^{j,k}(t,x)$,
 $b_{\ee,r}^{j,k}(t)= \rho_{\ee,t}^r
b_r^{j,k}(t)$, $a_{\ee,r}^j (t,x) =  \rho_{\ee,t}^r a_r^j(t,x)$, $a_{\ee,r}(t,x)=  \rho_{\ee,t}^r a_r(t,x)$,
 $b_{\ee,r}(t,x)=  \rho_{\ee,t}^r b_r(t,x)$, $L_{\ee,r}=  \rho_{\ee,t}^r L_r$, $F_{\ee,r}(t,x) =  \rho_{\ee,t}^r F_r(t,x)$.
Then for $\ee \in [0,1]$, the solution $Z_\ee(t,\cdot)$  to equation \eqref{Z} with the increasing processes
$V^r_{\ee,t}$ can be rewritten
as  \eqref{Zee} with the initial data $Z_\ee(0)=\ee Z_{1,0} + (1-\ee) Z_{0,0}$ and the operators (resp. processes)
$S_{\ee,l} = S_l$ (resp. $G_{\ee,l}= G_l$). Furthermore, we have
\begin{align*}     \sum_{r=0}^{d_1}  \sum_{j,k=1}^{d} & \lambda^j \lambda^k \big( a^{j,k}_{\ee,r}(t,x)
+i b^{j,k}_\ee(t)\big) dV^r_t = \\
& \sum_{0\leq r \leq d_1} \sum_{j,k=1}^{d}
 \lambda^j \lambda^k \big(a^{j,k}_{r}(t,x)+ ib^{j,k}_r(t)\big) dV^r_{\ee,t}.
\end{align*}
Hence the conditions {\bf (A1), (A2)}, {\bf (A3(m+3))} and  {\bf (A4(m+3,p))} are satisfied. Therefore, using Theorem
\ref{diff1}, one deduces that the proof of Theorem \ref{Thsplitting1} reduces to check that
\begin{equation} \label{majoder1}
\sup_{\ee\in [0,1]} \EE\Big( \sup_{t\in [0,1]} \|\zeta_\ee(t)\|_m^p\Big)
 \leq C \big( \EE\|Z_{1,0}-Z_{0,0}\|_m^p + A^p\big),
\end{equation}
where if one lets  $A^r_t=V^r_{1,t} - V^r_{0,t}$, the process  $\zeta_\ee$ is   the unique solution to \eqref{zetaee}
which here can be written as follows: for $t\in [0,T]$ one has
\begin{eqnarray} \label{zetaeebis}
d\zeta_\ee(t) &=&  \sum_{r=0}^{d_1}  \big[ L_r \zeta_\ee(t,x) dV^r_{\ee,t}
 + \big( L_r Z_\ee(t,x) + F_{r}(t,x)\big)
 dA^r_t \big]\nonumber \\
&&  + \sum_{l\geq 1}  S_l \zeta_\ee(t,x) dM_l(t),
\end{eqnarray}
and the initial condition  is $\zeta_\ee(0)=Z_{1,0}-Z_{0,0}$.

\noindent To ease notations, given a multi-index $\alpha$, $j,k\in \{1, \cdots , d\}$ and $Z$ smooth enough,  set
$ Z_\alpha = D^\alpha Z, \quad Z_{\alpha,j}=D^\alpha D_j Z \quad \mbox{\rm and }\,  Z_{\alpha,j,k}=D^\alpha
D_j D_k Z$,
so that for $Z,\zeta \in H^m$, $(Z,\zeta)_m = \sum_{|\alpha \leq m} (Z_\alpha,\zeta_\alpha)_0$. Let
\[ {\mathcal A} =
\Big\{ \sum_{\alpha} \sum_{\beta} a^{\alpha,\beta } Z_\alpha Z_\beta \; ;\:  a^{\alpha,\beta} \;
\mbox{\rm uniformly bounded and complex-valued},\;  Z \in H^{m+3}\Big\}
\]
and for $\Phi, \Psi \in {\mathcal A} $ set $\Phi \sim \Psi$ if there exists $Z\in H^m$ such that
$\int_{\RR^d} (\Phi - \Psi)(x) dx = \int_{\RR^d} \Gamma(x) dx$, where $\Gamma$ is a function defined by
\begin{equation} \label{defPalpha}
\Gamma(x) =\sum_{|\alpha| \leq m } Z_\alpha (x)  \overline{ P^\alpha Z (x)} \quad \mbox{\rm with }\;
  P^\alpha Z = \sum_{|\beta|\leq m} \gamma^{\alpha,\beta}
Z_\beta,
\end{equation}
for some complex-valued functions $\gamma^{\alpha,\beta}$ such that  $|\gamma^{\alpha, \beta}|$ 
are  estimated from above by the constants appearing in Assumptions {\bf (A1), (A2)}, {\bf (A3(m+3))}, {\bf (A4(m+3,p))}.
Note that if $\Gamma$ is as  above, then for some constant $C_m(\Gamma)$ we have
\begin{equation} \label{majogamma} \int_{\RR^d} |\Gamma(x)|\, dx
\leq C_m(\Gamma) \|Z\|_m^2.
\end{equation}
For $\ee >0$, $j,k=1, \cdots d$, $l\geq 1$ and $t\in [0,T]$, set $q_t^l = d\langle M^l\rangle_t / dV_t$ and let
\[  \tilde{a}^{j,k}_\ee :=  \tilde{a}^{j,k}_\ee(t,\cdot) =
\sum_{r=0}^{d_1} a^{j,k}_{\ee,r}(t,\cdot) - \frac{1}{2}
\sum_{l\geq 1} \s_l^j(t,\cdot) \s_l^k(t,\cdot) \,  q^l_t. \]
For $m\geq 0$ and $z\in H^{m+1}$, set
\begin{equation}\label{qf}
[Z]_m^2 : = [Z]_m^2(t) = \sum_{j,k=1}^{d} \big( \tilde{a}^{j,k}_\ee(t) D_jZ\, ,\, D_k Z\big)_m + C_m \|Z\|_m^2,
\end{equation}
with $C_0=0$ and $C_m>0$ to be chosen later on, so that the right handside of \eqref{qf} is non negative.
Given $Z,\zeta \in H^{m+1}$, set  $[Z,\zeta]_m = \frac{1}{4} \big( [Z+\zeta]_m^2 +
[Z-\zeta]_m^2\big)$. We at first prove that $[.]_m$ defines  a non-negative  quadratic form on $H^{m+1}$
 for  some large enough constant $C_m$.
Once more, this result does not require that the coefficients  be
time-independent.
\begin{lemma}\label{Cm}
Suppose that the conditions {\bf (A1), (A2)} and  {\bf (A3(m+1))} are satisfied.
Then    there exists a  large enough  constant $C_m$ such that \eqref{qf}
defines a  non-negative quadratic form on $H^{m+1}$.
\end{lemma}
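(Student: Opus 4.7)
The plan is to reduce the non-negativity of $[\cdot]_m^2$ to the pointwise positivity of the real symmetric matrix $(\tilde a^{j,k}_\ee(t,x))$, which is itself a direct consequence of Assumption {\bf (A2)}. First I would observe that the inequality \eqref{coercive} is a measure inequality on $[0,T]$; dividing by $dV_t$ (which dominates each $dV^r_t$ and each $d\langle M^l\rangle_t$ by \eqref{V}) and using the densities $\rho^r_{\ee,t} = dV^r_{\ee,t}/dV_t$ and $q^l_t = d\langle M^l\rangle_t/dV_t$, one obtains, for a.e.\ $(t,\omega)$ and every $x,y \in \RR^d$,
\[
\sum_{j,k=1}^d y^j y^k\, \tilde a^{j,k}_\ee(t,x) \;\geq\; 0.
\]
Since the matrices $(a^{j,k}_r)$ are symmetric (part (i) of {\bf (A2)}) and $\sigma_l^j \sigma_l^k$ is symmetric in $(j,k)$, the matrix $(\tilde a^{j,k}_\ee)$ is itself real and symmetric. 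This takes care of the case $m=0$ with $C_0 = 0$, since
\[
[Z]_0^2 \;=\; \int_{\RR^d}\!\sum_{j,k} \tilde a^{j,k}_\ee(t,x)\bigl[D_j X\, D_k X + D_j Y\, D_k Y\bigr]\,dx \;\geq\; 0.
\]

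For $m \geq 1$ I would expand each inner product $(\tilde a^{j,k}_\ee D_j Z, D_k Z)_m$ by applying the Leibniz rule to $D^\alpha(\tilde a^{j,k}_\ee D_j Z)$ for $|\alpha|\leq m$. The ``leading'' contribution, where every derivative in $D^\alpha$ falls on $D_j Z$, gives
\[
\sum_{j,k}\sum_{|\alpha|\leq m}\int_{\RR^d} \tilde a^{j,k}_\ee(t,x)\,\mathrm{Re}\bigl(D^\alpha D_j Z\,\overline{D^\alpha D_k Z}\bigr)\,dx,
\]
which is pointwise non-negative by the first paragraph applied with $y^j = D^\alpha D_j X(x)$ and then $y^j = D^\alpha D_j Y(x)$.

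The remaining ``cross'' contributions have the form $(D^\gamma \tilde a^{j,k}_\ee\, D^\beta D_j Z,\,D^\alpha D_k Z)_0$ with $\alpha=\beta+\gamma$ and $|\gamma|\geq 1$. When $|\gamma|\geq 2$, one already has $|\beta|+1 \leq m-1$ and the term is directly controlled by $\tilde K(m+1)\,\|Z\|_m^2$ via Cauchy--Schwarz and Assumption {\bf (A3(m+1))}. The delicate case is $|\alpha|=m$, $|\gamma|=1$, in which $D^\alpha D_k Z$ contains $m+1$ derivatives; this is the main obstacle. I would handle it by the same integration-by-parts device as in \eqref{*}, writing
\[
(D^\gamma \tilde a^{j,k}_\ee\, D^\beta D_j Z,\, D^\alpha D_k Z)_0 \;=\; -(D^\gamma \tilde a^{j,k}_\ee\, D^\alpha D_j Z,\, D^\beta D_k Z)_0 \;-\; (D^{2\gamma} \tilde a^{j,k}_\ee\, D^\beta D_j Z,\, D^\beta D_k Z)_0,
\]
and then exploiting the symmetry $\tilde a^{j,k}_\ee = \tilde a^{k,j}_\ee$: after summing over $j,k$ and relabelling, the first summand on the right equals the original left-hand side, so that the two instances combine to give twice the cross term equal to minus the second summand on the right, which involves only derivatives of $Z$ of order at most $m$ and derivatives of $\tilde a^{j,k}_\ee$ of order at most $m+1$. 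This final term is therefore bounded in absolute value by $C\,\|Z\|_m^2$ thanks to {\bf (A3(m+1))}.

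Combining the three contributions, one obtains $\sum_{j,k}(\tilde a^{j,k}_\ee D_j Z, D_k Z)_m \geq -C_m^\ast \|Z\|_m^2$ for some constant $C_m^\ast$ depending only on $d$, $\tilde K(m+1)$ and the constants in {\bf (A1)}--{\bf (A2)}. Choosing $C_m \geq C_m^\ast$ yields $[Z]_m^2 \geq 0$ on $H^{m+1}$. Quadraticity is immediate from the definition via the polarisation $[Z,\zeta]_m = \tfrac14([Z+\zeta]_m^2 + [Z-\zeta]_m^2)$ extended by bilinearity, so this completes the proof.
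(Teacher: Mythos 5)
Your overall strategy is the one the paper follows: Leibniz expansion of $(\tilde a^{j,k}_\ee D_jZ,D_kZ)_m$, pointwise non-negativity of the leading term deduced from {\bf (A2)} after dividing the measure inequality by $dV_t$, and a symmetry-plus-double-integration-by-parts argument reducing the cross terms with one derivative on the coefficient to terms involving $D^{2\gamma}\tilde a^{j,k}_\ee$ and at most $m$ derivatives of $Z$. That part of your argument is correct and coincides in substance with the paper's treatment.

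However, your handling of the cross terms with $|\gamma|\geq 2$ fails as stated. The term is $(D^\gamma\tilde a^{j,k}_\ee\, D^\beta D_jZ,\, D^\alpha D_kZ)_0$, and the second factor carries $|\alpha|+1$ derivatives, which equals $m+1$ when $|\alpha|=m$; Cauchy--Schwarz therefore only yields a bound of the form $C\|Z\|_{m-1}\|Z\|_{m+1}$, which is not dominated by a constant times $\|Z\|_m^2$ (take $Z$ the sum of a low-frequency piece with large $L^2$ norm and a small high-frequency piece: the product $\|Z\|_{m-1}\|Z\|_{m+1}$ exceeds any fixed multiple of $\|Z\|_m^2$). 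Nor can you absorb a small multiple of $\|Z\|_{m+1}^2$ into the leading term, because under the degenerate parabolicity assumption that term is merely non-negative --- in the Schr\"odinger case $\tilde a^{j,k}_\ee\equiv 0$. The repair is exactly the device you already use for $|\gamma|=1$, and which the paper applies to \emph{all} cross terms: integrate by parts once to move the single derivative $D_k$ off $D^\alpha D_kZ$, obtaining $-(D_kD^\gamma\tilde a^{j,k}_\ee\, D^\beta D_jZ,\, D^\alpha Z)_0-(D^\gamma\tilde a^{j,k}_\ee\, D_kD^\beta D_jZ,\, D^\alpha Z)_0$; since $|\beta|+2\leq m$ when $|\gamma|\geq 2$, both slots now contain at most $m$ derivatives of $Z$, while $D_kD^\gamma\tilde a^{j,k}_\ee$ is bounded by {\bf (A3(m+1))} (together with {\bf (A1)} for the factor $\sum_l q^l_t$), so these terms are $O(\|Z\|_m^2)$. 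With this correction your proof is complete and essentially identical to the paper's.
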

\begin{proof}
Assumption {\bf (A2)}  for $\ee\in \{0,1\}$
implies that \eqref{qf} holds for $m=0$ and $C_0=0$. Let $m\geq 1$ and
$\alpha$ be a multi-index such that $1\leq |\alpha|\leq m$. The Leibnitz formula implies the existence of constants
$C(\alpha,\beta,\gamma)$ such that  for $Z\in H^{m+1}$,
\begin{align*}   \Big( \sum_{j,k=1}^{d} &  \big( \tilde{a}^{j,k}_\ee(t)  D_jZ\big)_\alpha\, ,\,  Z_{\alpha,k}\Big)_0  = \\
& \sum_{j,k=1}^{d} \! \big( \tilde{a}^{j,k}_\ee(t) Z_{\alpha,j} \, ,\, Z_{\alpha,k} \big)_0
+ \!\! \sum_{\beta + \gamma = \alpha, |\beta |\geq 1}\!\!\! C(\alpha,\beta,\gamma) I_\ee^{\alpha,\beta,\gamma}(t) ,
\end{align*}
where
 $I^{\alpha,\beta,\gamma}_\ee(t) :=
\sum_{j,k=1}^{d} \big(  D^\beta \tilde{a}^{j,k}_\ee (t) Z_{\gamma,j}\, ,\,  Z_{\alpha, k} \big)_0 $.
Furthermore given $m\geq 1$,  multi-indices $\alpha$ with  $|\alpha |\leq m$ and
$Z\in H^{m+1}$,
using Assumption {\bf (A2)} we deduce that
$ \sum_{1\leq j,k \leq d }  \big( \tilde{a}^{j,k}_\ee(t) Z_{\alpha,j}\, ,\,  Z_{\alpha,k} \big)_0 \geq 0$.
Thus, the proof reduces to check that
\begin{equation} \label{aequiv}
  I^{\alpha,\beta,\gamma}_\ee(t)  \sim 0 .
\end{equation}
Indeed,  then the upper estimate \eqref{majogamma} proves  \eqref{qf}, which concludes the proof of the Lemma.
 Integration by parts implies $ I_\ee^{\alpha,\beta,\gamma}(t) =
-\sum_{j,k=1}^d  \Big( D_k\big(  D^\beta \tilde{a}_\ee^{j,k}(t) Z_{\gamma,j}\big),   Z_\alpha\Big)_0$.
 Since  $|\beta|\leq m$,   by Assumption {\bf (A3(m+1))}
 we know that $D_k D^\beta \tilde{a}^{j,k}_\ee(t)$ is bounded  and hence
  $  I_\ee^{\alpha,\beta,\gamma}(t)
\sim -\sum_{j,k=1}^d \!\! \big( D^\beta \tilde{a}^{j,k}_\ee(t) Z_{\gamma,j,k}\, ,\,  Z_\alpha\big)_0$.
If $|\beta |\geq 2$, then $|\gamma |\leq m-2$; hence
by {\bf (A3(m))} we deduce that   $  I_\ee^{\alpha,\beta,\gamma}(t) \sim 0$. If $|\beta| =1$, then $ \tilde{a}_\ee^{j,k}(t) =
 \tilde{a}_\ee^{k,j}(t)$, so that
\begin{align*}
  I_\ee^{\alpha,\beta,\gamma}(t) &  = \sum_{j,k=1}^{d} \big(  D^\beta  \tilde{a}_\ee^{j,k}(t) Z_{\gamma,j} \, ,\,
 D^\beta Z_{\gamma,k} \big)_0\\
& \sim \frac{1}{2} \sum_{j,k=1}^{d} \int_{\RR^d}  D^\beta
 \tilde{a}_\ee^{j,k}(t,x) D^\beta \big( X_{\gamma,j}(.) X_{\gamma,k}(.)+ Y_{\gamma,j}(.) Y_{\gamma,k}(.)\big)(x)
 dx   .
 \end{align*}
Thus, integrating by parts  and using {\bf (A3(2))}  and the inequality $|\gamma|+1 \leq m$, we deduce that
$I^{\alpha,\beta,\gamma}_\ee(t)  \sim - \, \frac{1}{2} \sum_{j,k=1}^{d} \big( D^\beta D^\beta \tilde{a}^{j,k}_\ee(t)
Z_{\gamma,j} \, ,\, Z_{\gamma,k}\big)_0\sim 0$. This concludes the proof.
\end{proof}
The following lemma gathers some technical results which again  hold for time-dependent coefficients.
\begin{lemma} \label{tech}
Suppose that the assumptions of Theorem \ref{Thsplitting1} hold. There exists a constant $C$ such that for $\zeta \in H^{m+1}$ one has $dV_t$ a.e.
\begin{equation} \label{p}
p(\zeta) := 2\sum_{0\leq r\leq d_1 } \rho_{\ee,t}^r \langle \zeta, L_r \zeta\rangle_m + \sum_{l\geq 1} q^l_t \|S_l \zeta\|_m^2 +
2 [\zeta]_m^2 \leq C \|\zeta\|_m^2.
\end{equation}
For any $\tilde{r}= 0,\,  1, \,  \cdots,\,  d_1$,  $Z\in H^{m+3}$ and  $\zeta \in H^{m+1}$ let
\[ q_{\tilde{r}}(\zeta,Z)  = \sum_{0\leq r\leq d_1 }   \rho_{\ee,t}^r
\big[ \langle L_r \zeta , L_{\tilde{r}} Z\rangle_m + \langle \zeta , L_{\tilde{r}} L_r Z\rangle_m\big]
+ \sum_{l\geq 1} q^l_t (S_l \zeta , L_{\tilde{r}} S_l Z)_m .\]
Then  there exists a constant $C$ such that for any $Z\in H^{m+3}$ and $\zeta\in H^{m+1}$, one has $dV_t$ a.e.
\begin{equation} \label{q}
|q_{\tilde{r}} (\zeta,Z)|\leq C \|Z\|_{m+3} \big( \|\zeta\|_m + [\zeta]_m \big).
\end{equation}
\end{lemma}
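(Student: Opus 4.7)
The plan is to prove \eqref{p} by refining the proof of Condition \textbf{(C1)} so as to extract a precise cancellation at the top layer $|\alpha|=m$, and then to deduce \eqref{q} by polarizing \eqref{p} around the one-parameter family $\xi_\epsilon := \zeta + \epsilon L_{\tilde r}Z$.

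For \eqref{p}, following the same route as the proof of \textbf{(C1)} I would perform the Leibniz expansion of $D^\alpha L_r\zeta$ and $D^\alpha S_l\zeta$ via \eqref{DalphaL} and \eqref{DalphaS}, integrate the outer $D_k$ of $L_r$ by parts, and cancel the imaginary $b_r^{j,k}$ contribution at the top layer by the symmetry of $b_r^{j,k}$, to arrive at
\[
p_0(\zeta) := 2\sum_{r}\rho^r_{\ee,t}\langle\zeta,L_r\zeta\rangle_m + \sum_{l}q^l_t\|S_l\zeta\|_m^2 = -2\sum_{|\alpha|=m}\sum_{j,k}\bigl(\tilde a^{j,k}_\ee D_jD^\alpha\zeta,\,D_kD^\alpha\zeta\bigr)_0 + R_1(\zeta),
\]
with $|R_1(\zeta)|\le C\|\zeta\|_m^2$. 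In parallel, expanding $2[\zeta]_m^2 = 2\sum_{j,k}(\tilde a^{j,k}_\ee D_j\zeta,D_k\zeta)_m + 2C_m\|\zeta\|_m^2$ by Leibniz and invoking the congruence $I^{\alpha,\beta,\gamma}_\ee(t)\sim 0$ from the proof of Lemma~\ref{Cm} yields
\[
2[\zeta]_m^2 = 2\sum_{|\alpha|\le m}\sum_{j,k}\bigl(\tilde a^{j,k}_\ee D_jD^\alpha\zeta,\,D_kD^\alpha\zeta\bigr)_0 + R_2(\zeta),\qquad |R_2(\zeta)|\le C\|\zeta\|_m^2.
\]
Adding the two identities, the top layers $|\alpha|=m$ cancel exactly and the surviving $|\alpha|<m$ summands involve at most $m$ derivatives of $\zeta$, giving $p(\zeta)\le C\|\zeta\|_m^2$. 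The positivity of the $|\alpha|<m$ sum (from the semi-definiteness of $\tilde a_\ee$ by \textbf{(A2)}) combined with $|R_1+R_2|\le C\|\zeta\|_m^2$ in fact yields the \emph{two-sided} bound $|p(\zeta)|\le C\|\zeta\|_m^2$, which is needed for part~(2).

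For \eqref{q}, I would fix $\tilde r$ and $Z\in H^{m+3}$ and apply \eqref{p} to $\xi_\epsilon := \zeta + \epsilon L_{\tilde r}Z\in H^{m+1}$ for $\epsilon\in\RR$. Writing $g(\epsilon) := C\|\xi_\epsilon\|_m^2 - p(\xi_\epsilon) = g_0 + 2\epsilon g_1 + \epsilon^2 g_2 \ge 0$, the discriminant inequality for a nonnegative quadratic forces $g_1^2\le g_0 g_2$; the two-sided bound of part~(1) applied to $\zeta$ and to $L_{\tilde r}Z$ gives $0\le g_0\le C\|\zeta\|_m^2$ and $0\le g_2\le C\|L_{\tilde r}Z\|_m^2\le C\|Z\|_{m+3}^2$, whence $|g_1|\le C\|\zeta\|_m\|Z\|_{m+3}$. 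Since $g_1 = C(\zeta,L_{\tilde r}Z)_m - p_1$, where $2p_1$ is the coefficient of $\epsilon$ in $p(\xi_\epsilon)$, and $|(\zeta,L_{\tilde r}Z)_m|\le C\|\zeta\|_m\|Z\|_{m+3}$, one deduces $|p_1|\le C\|\zeta\|_m\|Z\|_{m+3}$. A direct computation of the linear coefficient of $p(\xi_\epsilon)$ produces
\[
p_1 = q_{\tilde r}(\zeta,Z) + \sum_{r}\rho^r_{\ee,t}\langle\zeta,[L_r,L_{\tilde r}]Z\rangle_m + \sum_{l}q^l_t(S_l\zeta,[S_l,L_{\tilde r}]Z)_m + 2[\zeta,L_{\tilde r}Z]_m.
\]
The commutators $[L_r,L_{\tilde r}]$ and $[S_l,L_{\tilde r}]$ are of orders $3$ and $2$ (their top symbols cancelling), so after one integration by parts in the second sum to shift the first-order part of $S_l$ onto the $Z$ side, both commutator terms are dominated by $C\|\zeta\|_m\|Z\|_{m+3}$ using \textbf{(A3($m{+}3$))}. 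The remaining term is controlled by Cauchy--Schwarz for the nonnegative form $[\cdot]_m^2$ (Lemma~\ref{Cm}): $2|[\zeta,L_{\tilde r}Z]_m|\le 2[\zeta]_m[L_{\tilde r}Z]_m\le C[\zeta]_m\|L_{\tilde r}Z\|_{m+1}\le C[\zeta]_m\|Z\|_{m+3}$. Combining these bounds yields \eqref{q}.

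The main obstacle is concentrated in part~(1): obtaining the two-sided bound $|p(\zeta)|\le C\|\zeta\|_m^2$ requires matching the top-layer expansions of $p_0(\zeta)$ and $2[\zeta]_m^2$ up to $O(\|\zeta\|_m^2)$, which draws on the full strength of the congruence $\sim 0$ from the proof of Lemma~\ref{Cm} and the coefficient bounds \textbf{(A3($m{+}3$))}. Once this is available, part~(2) is a clean polarization argument, and the only subtlety there is recognizing that $[\zeta]_m$ enters the final estimate through the bilinear cross-term $[\zeta,L_{\tilde r}Z]_m$ appearing in the expansion of $2[\xi_\epsilon]_m^2$.
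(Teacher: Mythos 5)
Your proposal is correct, and its two halves relate to the paper's argument in different ways. For \eqref{p} you follow essentially the same computation as the paper: Leibniz expansion of $D^\alpha L_r\zeta$ and $D^\alpha S_l\zeta$, one integration by parts, cancellation of the $b_r^{j,k}$ terms by symmetry, and the congruence $I^{\alpha,\beta,\gamma}_\ee\sim 0$ from Lemma~\ref{Cm}, so that the principal contributions are exactly matched by $-2[\zeta]_m^2$ and the remainder is of the form \eqref{defPalpha}; the paper records this as the exact identity \eqref{eqp}, $p(\zeta)=2\sum_{|\alpha|\le m}(\zeta_\alpha,P^\alpha\zeta)_0$, which in particular gives the two-sided bound $|p(\zeta)|\le C\|\zeta\|_m^2$ that you correctly identify as the stronger statement actually needed later (the lemma as stated only asserts the one-sided inequality, so flagging and proving the two-sided version is an essential and correct step on your part). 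For \eqref{q} your route genuinely differs in mechanism: the paper polarizes the identity \eqref{eqp} with $\tilde Z=L_{\tilde r}Z$, obtaining an exact formula whose right-hand side is bounded by $C\|\zeta\|_m\|Z\|_{m+3}$, whereas you never use the identity itself, only the scalar two-sided bound, and recover the bilinear estimate from the discriminant of the nonnegative quadratic $\epsilon\mapsto C\|\zeta+\epsilon L_{\tilde r}Z\|_m^2-p(\zeta+\epsilon L_{\tilde r}Z)$. The discriminant argument is slightly more elementary and robust (it works for any quadratic form admitting a two-sided $\|\cdot\|_m^2$ bound), at the cost of having to establish that two-sided bound explicitly; the paper's identity-based polarization gets the cross-term estimate directly but leans on keeping track of the operators $P^\alpha$. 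After that, the two arguments coincide: the same commutator structure $[L_r,L_{\tilde r}]$ of order $3$ and $[S_l,L_{\tilde r}]$ of order $2$ (your coefficient $1$ on the $S_l$ commutator sum is the correct algebra; the factor $2$ in the paper's display is a typo), the same integration by parts to shift the first-order part of $S_l$ onto the $Z$ side so as to avoid $\|\zeta\|_{m+1}$, and the same final bounds $2|[\zeta,L_{\tilde r}Z]_m|\le 2[\zeta]_m[L_{\tilde r}Z]_m$ together with $[w]_m\le C\|w\|_{m+1}$ from \eqref{qf} and {\bf (A3(m))}, giving $[L_{\tilde r}Z]_m\le C\|Z\|_{m+3}$.
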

\begin{proof}
Suppose at first that $\zeta \in H^{m+2}$; since the upper estimates \eqref{p} and \eqref{q} only involve the
$H^{m+1}$-norm  of $\zeta$, they will follow by approximation. Then  we have
\[ \sum_{0\leq r\leq  d_1} 2  \rho_{\ee,t}^r\,  \langle \zeta, L_r \zeta\rangle_m
+ \sum_{l\geq 1}  q_t^l\,  \|S_l \zeta\|_m^2  =
\sum_{|\alpha|\leq m}  Q_t^\alpha(\zeta,\zeta) ,   \]
where
\[  Q_t^\alpha(\zeta,\zeta) = 2 \sum_{0\leq r\leq  d_1} \rho_{\ee,t}^r\;  \big( \zeta_\alpha \, ,\,
 (L_r \zeta)_\alpha\big)_0  +
\sum_{l\geq 1} q^l_t \; \| (S_l \zeta)_\alpha\|_0^2. \]
Integration by parts and assumption {\bf (A3(m))} imply that for $|\alpha|\leq m$, one has
\begin{align*}
2\big( \zeta_\alpha \, , \, (a^j_{\ee,r}  \zeta_j)_\alpha \big)_0
\sim 2 \int_{\RR^d} a^j_{\ee,r} (t,x)& \big( X_\alpha(x)  X_{\alpha,j}(x) + Y_\alpha(x) Y_{\alpha,j}(x)\big) dx \\
 = \int_{\RR^d}  a^j_{\ee,r} (t,x) \big( X_\alpha^2 + Y_\alpha^2\big)_j(x)\,  dx &
 \sim - \big( a^j_{\ee,r}(t)_j\,  \zeta_\alpha\, ,\,  \zeta_\alpha\big)_0 \sim 0, \\
\big( \zeta_\alpha \, ,\, \big( (a_{\ee,r} + ib_{\ee,r} ) \zeta)_\alpha\big)_0  & \sim 0, \\
2 \Big(  \big(\s_l^j  \zeta_j \big)_\alpha \, ,
 \, \big( (\s_l +i\tau_l)\zeta \big)_\alpha \Big)_0
& \sim   2 \Big(  \s_l^j \, \zeta_{\alpha,j} \, ,\,
(\s_l +i \tau_l) \zeta_\alpha \Big)_0   \\
& \sim - \int_{\RR^d} \big(  \s_l^j \s_l )  \big)_j(x)\,  |\zeta_\alpha(x)|^2\,  dx \sim 0, \\
\big\| \big( (\s_k + i \tau_k) \zeta)_\alpha \big\|_0^2
 & \sim 0.
\end{align*}
Finally,  we have $\Big( \zeta_\alpha \, ,\,
\sum_{j,k} \big(D_k\big( i b_r^{j,k}(t) D_j \zeta\big)_\alpha\Big)_0  =0$.
Set $L_r^0 \zeta = \sum_{j,k=1}^dD_k\big( a^{j,k}_r D_j \zeta\big) $ and $S_l^0 \zeta =
\sum_{j=1}^d \big( \s_l^j+i\tau_l^j\big) D_j \zeta$. Then we have
\begin{equation} \label{Qalpha}
Q^\alpha_t(\zeta,\zeta) \sim 2\sum_{r=0}^{d_1} \rho_{\ee,t}^r
\big( \zeta_\alpha \, ,\,  (L_r^0 \zeta )_\alpha \big)_0  +
\sum_{l\geq 1} q^l_t \|\big(  S^0_l \zeta\big)_\alpha\|^2_0.
\end{equation}
If $m =0$, integration by parts proves that the right hand side of \eqref{Qalpha} is equal  to $-2 [\zeta]_0^2$ (with $C_0=0$).
Let $m\geq 1$ and $\alpha$ be a multi index such that $m\geq |\alpha|\geq 1$;  set $\Gamma(\alpha) = \{ (\beta,\gamma) :
\alpha = \beta + \gamma , |\beta|=1\}$.
For $\phi, \psi \in H^m$, let $C(\beta,\gamma)$ be coefficients such that:
\[D^\alpha(\phi \psi)=\phi D^\alpha \psi + \sum_{(\beta,\gamma)\in \Gamma(\alpha)}
C(\beta,\gamma) D^\beta \phi \, D^\gamma \psi +
\sum_{\beta+\gamma=\alpha , |\beta|\geq 2} C(\beta,\gamma) D^\beta \phi \, D^\gamma \psi.\]
 This yields
\begin{align*}
\sum_{l\geq 1} q^l_t & \big\| \big(S_l^0 \zeta \big)_\alpha\big\|_0^2  \sim  \sum_{l\geq 1}  q^l_t \sum_{j,k=1}^{d} \Big\{
\big( \sigma_l^k \, \zeta_{\alpha,k} \, ,\,
 \s_l^j  \, \zeta_{\alpha,j} \big)_0 \\
& \quad + 2 \sum_{(\beta,\gamma) \in \Gamma(\alpha)}
 C(\beta,\gamma) \big( D^\beta \s_l^k  \zeta_{\gamma,k} \, ,\, \s_l^j
 \zeta_{\alpha,j}\big)_0 \Big\}.\\
 \sim & \sum_{l\geq 1}  q^l_t \sum_{j,k=1}^{d}
\Big\{ \Big( \s_l^k \s_l^j  \, \zeta_{\alpha,k} \, ,\, \zeta_{\alpha,j} \Big)_0
+ 2 C(\alpha,\beta) \, \Big( D^\beta \s_l^k \s_l^j \,
  \zeta_{\gamma,k}\, ,\, \zeta_{\alpha,j}\Big)_0 .
\end{align*}
Since for $(\beta,\gamma) \in \Gamma(\alpha)$ we have  $|\gamma| +1\leq |\alpha|\leq m$ while $|\beta|+1=2$,
 integrating by parts and  using
{\bf (A3(m))}  we have for fixed $l$,
\begin{align*}
2 q^l_t  \sum_{j,k} \Big(   D^\beta  \sigma_l^k  \s_l^j
 \zeta_{\gamma,k}\, ,\,  \zeta_{\alpha,j}  \Big)_0
 = - q_t^l \sum_{j,k} \big(  D^\beta (\sigma_l^k \sigma_l^j)
 \zeta_{\gamma,j,k} \, ,\, \zeta_\alpha\big)_0 .
\end{align*}
Furthermore, integration by parts  and {\bf (A3(m))}  yield
\begin{align*}   2\rho_{\ee,t}^r \big( \zeta_\alpha\, ,\,  (L^0 \zeta)_\alpha\big)_0
 \sim & - 2 \sum_{j,k}\Big\{ \big(  a^{j,k}_{\ee,r} \zeta_{\alpha,j}\, ,\,
\zeta_{\alpha,k}\big)_0 \\
& -   \sum_{(\beta,\gamma)\in \Gamma(\alpha)}
C(\beta,\gamma) \big( D^\beta(a^{j,k}_{\ee,r})  \zeta_{\gamma,j}\, ,\,  \zeta_{\alpha,k}\big)_0 \Big\}.
\end{align*}
Therefore,  the definition of $\tilde{a}^{j,k}_{\ee,r}$, \eqref{Qalpha}
and   \eqref{aequiv}  yield
\begin{align*}
Q^\alpha_t(\zeta,\zeta) &\sim -2 \sum_{j,k} \big(  \tilde{a}^{j,k}_{\ee}\, \zeta_{\alpha,j} \, ,\,
\zeta_{\alpha,k} \big)_0 - 2  \sum_{j,k} \sum_{(\beta,\gamma)\in \Gamma(\alpha)}
C(\beta,\gamma) \big( D^\beta(\tilde{a}^{j,k}_{\ee} ) \zeta_{\gamma,j}\, ,\,  \zeta_{\alpha,k} \big)_0\\
&\sim -2 \sum_{j,k} \Big( \big( \zeta_j \tilde{a}^{j,k}_{\ee}\big)_\alpha
\, ,\,  \zeta_{\alpha,k}\Big)_0  .
\end{align*}
Hence  for $\zeta \in H^{m+1}$,
\begin{equation}\label{eqp}
p(\zeta)=\sum_{|\alpha|\leq m} \int_{\RR^d} Q^\alpha_t (\zeta,\zeta) dx + 2 [\zeta]_m^2 = 2
\sum_{|\alpha|\leq m}   (\zeta_\alpha,P^\alpha \zeta)_0,
\end{equation}
for some operator $P^\alpha$ which satisfies \eqref{defPalpha}. Hence \eqref{majogamma}
 concludes the proof of \eqref{p}.

Polarizing  \eqref{eqp},  we deduce that for $\tilde{Z},\zeta\in H^{m+1}$,
\begin{align*}
\sum_{r=0}^{d_1} \rho_{\ee,t}^r \big[ \langle \tilde{Z}, L_r \zeta\rangle_m
  + \langle L_r \tilde{Z},\zeta \rangle_m\big] &  +
\sum_{l\geq 1}  q^l_t  \big( S_l \tilde{Z}, S_l \zeta )_m  + 2 [\tilde{Z}, \zeta]_m \\
&  = \sum_{|\alpha|\leq m} \big[ (\tilde{Z}_\alpha , P^\alpha\zeta)_0
+ (\zeta_\alpha,P^\alpha \tilde{Z})_0 \big].
\end{align*}
Let $\tilde{r}\in \{0,1, \cdots, d_1\}$ and for $Z\in H^{m+3}$, $\zeta\in H^{m+1}$, set $\tilde{Z} = L_{\tilde{r}} Z$;
 then if one sets
\begin{align*}
q_{\tilde{r}} (\zeta , Z): =
\sum_{r=0}^{d_1}  \rho_{\ee,t}^r
\big[ \langle L_r \zeta, L_{\tilde{r}} Z\rangle_m + \langle \zeta , L_{\tilde{r}} L_r Z \rangle_m\big]
+ \sum_{l\geq 1}   q^l_t\, \big( S_l \zeta , L_{\tilde{r}} S_l Z\big)_m ,
\end{align*}
one  deduces that
\begin{align*}
& q_{\tilde{r}}(\zeta, Z) \!  +\!  \sum_r \rho^r_{\ee,t} \big( \zeta, [L_r L_{\tilde{r}}\! -\!  L_{\tilde{r}} L_r] Z \big)_m
\!\! +\!  2 \sum_l q^l_t \big( S_l \zeta, [S_l L_{\tilde{r}} \! -\! 
 L_{\tilde{r}} S_l] Z \big)_m \! +  2 [\zeta, L_{\tilde{r}} Z]_m \\
& =  \sum_{|\alpha|\leq m}
\big[ (D^\alpha L_{\tilde{r}} Z , P^\alpha \zeta)_0  +  (D^\alpha \zeta , P^\alpha L_{\tilde{r}} Z )_0 \big] .
\end{align*}
The operators $L_r L_{\tilde{r}} - L_{\tilde{r}} L_r $ and $S_l L_{\tilde{r}} - L_{\tilde{r}} S_l$ are of order 3 and 2
respectively. Hence integration by parts and the Cauchy Schwarz inequality imply  that
\begin{align*}
|q_{\tilde{r}} (\zeta, Z) |\leq C \|\zeta\|_m \|Z\|_{m+3} + C [\zeta ]_m [Z]_m .
\end{align*}
Finally,   \eqref{qf} and  Assumption {\bf (A3(m))}  imply  that for $Z\in H^{m+1}$,
\[ [Z]_m^2 \leq C \|Z\|_{m+1}^2 + C_m \|Z\|_m^2 \leq C \|Z\|_{m+1}^2.\]
This concludes the proof of \eqref{q}.
 \end{proof}
The following lemma is based on some time integration by parts and requires the coefficients of $L_r$
and $F_r$ to be time independent.
\begin{lemma} \label{J-int}
Let the assumptions of Theorem \ref{Thsplitting1} be satisfied
and  $Z_\ee$ (resp. $\zeta_\ee$) denote  the processes   defined by \eqref{Z01} (resp. \eqref{zetaeebis}).
For $r= 0, \cdots, d_1$ and $t\in [0,T]$, let $A^r_t=V^r_{1,t} - V^r_{0,t}$ and set
\begin{equation} \label{Jeet}
 J_{\ee,t} := \sum_{r=0}^{d_1} \int_0^t \big( \zeta_\ee(s) , L_r Z_\ee (s) + F_r \big)_m dA^r_s.
 \end{equation}
Then there exists a constant $C$ such that for any stopping time $\tau \leq T$,
\begin{align}    \label{momentJ}
& \EE\Big[ \sup_{t\in[0, \tau]} \Big(J_{\ee,t} -\int_0^t [\zeta_\ee(s)]^2_m dV_s\Big)_+^{p/2}\Big] \nonumber  \\
&\qquad \qquad \leq \frac{1}{4p} \EE\Big( \sup_{t\in[0, \tau]} \|\zeta_\ee(s)\|_m^2 \Big)
+ C \Big( A^p + \EE\int_0^\tau \|\zeta_\ee(s)\|_m^p \, dV_s \Big) .
\end{align}
\end{lemma}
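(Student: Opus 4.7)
The plan is to integrate by parts in time with respect to the finite-variation process $A^r$, exploiting the fact that $L_r$ and $F_r$ have no time dependence so that the integrand $s\mapsto (\zeta_\ee(s),L_r Z_\ee(s)+F_r)_m$ is a semimartingale whose differential can be computed cleanly via the It\^o product rule applied to the $H^m$ inner product. Concretely, since $A^r$ is of finite variation,
\[
\int_0^t (\zeta_\ee , L_r Z_\ee + F_r)_m\, dA^r_s
= A^r_t\,(\zeta_\ee(t),L_r Z_\ee(t)+F_r)_m -\int_0^t A^r_s\, d(\zeta_\ee(s),L_r Z_\ee(s)+F_r)_m .
\]
The boundary term is estimated by $|A|\,\|\zeta_\ee(t)\|_m\big(\|Z_\ee(t)\|_{m+2}+\|F_r\|_m\big)$; a Young inequality splits it into a small multiple of $\sup_{s\le \tau}\|\zeta_\ee(s)\|_m^2$ (absorbed by the first term on the right) plus $C A^2(\|Z_\ee\|_{m+2}^2+\|F_r\|_m^2)$, whose $p/2$-moment is controlled by $CA^p$ thanks to Corollary~\ref{corZee} applied at level $m+3$ and the regularity assumption \eqref{A6}.

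The differential $d(\zeta_\ee,L_r Z_\ee+F_r)_m$ is expanded using the equations \eqref{Zee} and \eqref{zetaeebis}. The key observation is that after collecting the drift pieces with respect to the $dV^{r'}_{\ee,s}$ measures and adding the It\^o cross term $\sum_l(S_l\zeta_\ee,L_r S_l Z_\ee)_m\, d\langle M^l\rangle_s$, one recognises precisely the quantity $q_r(\zeta_\ee,Z_\ee)\,dV_s$ appearing in Lemma~\ref{tech}, up to lower-order remainders involving $F_{r'}$ and $G_l$ that are controlled by Cauchy--Schwarz together with \eqref{A6}. Consequently the bound \eqref{q} yields
\[
\Big|\int_0^t A^r_s\, q_r(\zeta_\ee(s),Z_\ee(s))\, dV_s\Big|\le C A\int_0^t \|Z_\ee(s)\|_{m+3}\bigl(\|\zeta_\ee(s)\|_m+[\zeta_\ee(s)]_m\bigr) dV_s.
\]
A Young inequality $A\|Z_\ee\|_{m+3}[\zeta_\ee]_m \le CA^2\|Z_\ee\|_{m+3}^2+\tfrac12[\zeta_\ee]_m^2$ then absorbs the $[\zeta_\ee]_m^2$ factor into the term $-\int_0^t[\zeta_\ee(s)]_m^2\,dV_s$ on the left-hand side of \eqref{momentJ}, which is exactly the role of this subtracted quadratic form. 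The remaining drift contribution is of the form $A^2\int_0^t\|Z_\ee\|_{m+3}^2\,dV_s$ plus $A\int_0^t\|\zeta_\ee\|_m\|Z_\ee\|_{m+3}\,dV_s$; a further Young/H\"older step, tempered by assumption~\eqref{V1} on $V_T$ and the a priori bound \eqref{aprioriee} at level $m+3$, turns these into $CA^p$ and $C\int_0^\tau\|\zeta_\ee\|_m^p\,dV_s$, precisely the terms on the right of \eqref{momentJ}.

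It remains to handle the martingale contributions $-\int_0^t A^r_s\,(\text{stuff})_m\, dM^l_s$ coming from both $d\zeta_\ee$ and $dZ_\ee$ in the expansion. Because $|A^r_s|\le A$ pointwise in $\omega$, the Burkholder--Davis--Gundy inequality gives, for the $p/2$-moment of the supremum in $t$,
\[
C\,\E\Big[\int_0^\tau |A^r_s|^2\bigl(\|S_l\zeta_\ee\|_m^2\|L_r Z_\ee+F_r\|_m^2+\|\zeta_\ee\|_m^2\|L_r S_l Z_\ee+L_r G_l\|_m^2\bigr) d\langle M^l\rangle_s\Big]^{p/4},
\]
which, again via H\"older, \eqref{aprioriee} and \eqref{A6}, is bounded by $CA^p$ up to another small multiple of $\E[\sup\|\zeta_\ee\|_m^2]$ to be absorbed. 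Summing over $r$ and $l$ and collecting all contributions yields \eqref{momentJ}.

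The main obstacle is the algebraic bookkeeping in the second step: one must show that after expanding $d(\zeta_\ee,L_r Z_\ee)_m$ and cancelling the drift-in-$A^{r'}$ pieces (which generate terms of order $A^2$ and are dealt with separately), the remaining $dV^{r'}_{\ee,s}$-drift and the It\^o covariation reassemble into precisely $q_r(\zeta_\ee,Z_\ee)\,dV_s$. This is where the time-independence of $L_r$ and $F_r$ is essential, for otherwise spurious $\partial_t$-terms would ruin the identification with the quadratic form of Lemma~\ref{tech} and the absorption mechanism provided by $[\zeta_\ee]_m^2$.
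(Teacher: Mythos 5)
Your first integration by parts, the identification of the $dV^{r'}_{\ee,s}$-drift plus It\^o covariation with $q_r(\zeta_\ee,Z_\ee)$ via Lemma~\ref{tech}, the absorption of $[\zeta_\ee]_m^2$, and the BDG treatment of the stochastic integrals all follow the paper's route. The genuine gap is your treatment of the contribution coming from the $dA^{\tilde r}$-part of the drift of $\zeta_\ee$ in \eqref{zetaeebis}, namely
\[
J^4_{\ee,t}=\int_0^t\sum_{r} A^r_s\sum_{\tilde r}\big(L_{\tilde r}Z_\ee(s)+F_{\tilde r}\, ,\, L_rZ_\ee(s)+F_r\big)_m\,dA^{\tilde r}_s ,
\]
which you dismiss as "terms of order $A^2$, dealt with separately". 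As written this is not of order $A^2$: the only small factor available is $|A^r_s|\le A$, while the integrator $dA^{\tilde r}_s$ is a signed measure whose total variation is controlled only by a constant (of order $2\tilde K$, via \eqref{V1}--\eqref{V}), not by $A$. A direct estimate therefore yields only $\EE\big(\sup_{t\le\tau}|J^4_{\ee,t}|^{p/2}\big)\le C A^{p/2}$, which is strictly weaker than the $CA^p$ required in \eqref{momentJ}; in the application to the splitting scheme this would halve the rate from $n^{-p}$ to $n^{-p/2}$ and defeat the purpose of the lemma.

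To actually produce the factor $A^2$ one must repeat, at this second level, the same device used for $J_{\ee,t}$ itself: symmetrize in $(r,\tilde r)$ so that $\sum_{r,\tilde r}A^r_s\,dA^{\tilde r}_s$ becomes $\tfrac12\,d(A^r_sA^{\tilde r}_s)$, and integrate by parts once more (this is \eqref{IPPJ4} in the paper), so that the product $A^r_tA^{\tilde r}_t\le A^2$ sits in the boundary term and in every remaining integrand, the time differential now falling on $\big(L_{\tilde r}Z_\ee+F_{\tilde r},L_rZ_\ee+F_r\big)_m$. This generates further drift, covariation and martingale terms ($J^{4,1}_{\ee,t}$, $J^{4,3}_{\ee,t}$, $J^{4,2}_{\ee,t}$ in the paper), the martingale one requiring another Burkholder--Davis--Gundy estimate together with \eqref{A6} and the a priori bound at level $m+3$; only then does $\EE\big(\sup_{t\le\tau}|J^4_{\ee,t}|^{p/2}\big)\le CA^p$ follow. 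This second integration by parts is the crux of the lemma beyond the first one, and it is exactly what your proposal omits; note also that it uses the time-independence of $L_r,F_r$ a second time (the time-dependent case needs Assumption (A5(m)) and is treated in Theorem~\ref{Thsplitting2}).
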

\begin{proof}
The main problem is to upper estimate $J_{\ee,t}$ in terms of $A$ and not in terms of the total variation of the
measures  $d A^r_t$.
This requires some  integration by parts; equations \eqref{Z01} and \eqref{zetaeebis} imply:
\begin{equation}\label{IPPJee}
 J_{\ee,t} = \sum_{r=0}^{d_1}
 \big( \zeta_\ee(t) \, ,\, L_r Z_\ee(t) + F_r \big)_m A^r_t - \sum_{1\leq k\leq 4} J^k_{\ee,t},
\end{equation}
where for $t\in [0,T] $  we set:
\begin{align*}
J^1_{\ee,t} = & \sum_{r} A^r_s \Big[ \sum_{\tilde{r}} \langle L_{\tilde{r}}\zeta_\ee(s) , L_r Z_\ee(s) + F_r \rangle_m
+ \langle \zeta_\ee(s) , L_r [L_{\tilde{r}} Z_\ee(s) + F_{\tilde r}] \rangle_m \Big] dV^r_{\ee,s}, \\
J^2_{\ee,t} = & \int_0^t \sum_r A^r_s \sum_{l\geq 1}
\big( S_l(s) \zeta_\ee(s) , L_r[S_l(s) Z_\ee(s) + G_l(s)] \big)_m d\langle M^l\rangle_s,\\
J^3_{\ee,t}=&  \int_0^t \sum_r A^r_s  \sum_{l\geq 1}\big[  \big( S_l(s) \zeta_\ee(s) , L_r Z_\ee(s) + F_r \big)_m \\
& \quad +  \big( \zeta_\ee(s) , L_r [S_l(s) Z_\ee(s) + G_l(s) ] \big)_m \big] dM^l_s,\\
J^4_{\ee,t}=& \int_0^t \sum_r A^r_s \Big[ \sum_{\tilde{r}}
\big( L_{\tilde{r}} Z_\ee(s) + F_{\tilde{r}} , L_r Z_\ee(s) + F_r \big)_m \Big] dA^{\tilde{r}}_s   .
\end{align*}
Note that
\[ J^4_{\ee,t}=\frac{1}{2} \int_0^t \sum_{r,\tilde{r}} \big( L_{\tilde{r}} Z_\ee(s) + F_{\tilde{r}} , L_r Z_\ee(s) + F_r \big)_m
d(A^r_s A^{\tilde{r}}_s).\]
Using  \eqref{q}, integration by parts, Assumption {\bf (A3(m))}, the Cauchy-Schwarz and Young inequalities, we deduce that
\begin{align*}
J^1_{\ee,t} & + J^2_{\ee,t} \leq
CA \int_0^t \Big[  \|Z_\ee(s)\|_{m+3}\big\{  [\zeta_\ee(s)]_m +  \|\zeta_\ee(s) \|_m \big\}
+ \sum_r \|\zeta_\ee(s)\|_m \; \|F_r\|_{m+2} \Big]\,  dV_s \\
&\qquad + C A \int_0^t \sum_l \|\zeta_\ee(s)\|_m \,  \|G_l(s)\|_{m+3}\,  d\langle M^l \rangle_s\\
&\leq \int_0^t \big( [\zeta_\ee (s)]_m^2 + \|\zeta_\ee(s)\|_m^2\big)\,  dV_s \\
&\qquad  + C A^2 \int_0^t \Big[ \Big(  \|Z_\ee(s)\|_{m+3}^2
+ \sum_r \|F_r\|_{m+2}^2 \Big)  dV_s + \sum_{l \geq 1} \|G_l(s)\|_{m+3}^2\,  d\langle M^l \rangle_s  \Big]\\
& \leq  \int_0^t \big( [\zeta_\ee (s)]_m^2 + \|\zeta_\ee(s)\|_m^2\big)\,  dV_s
 + C A^2 \Big( \int_0^t \|Z_\ee(s)\|_{m+3}^2\,  dV_s +
K_{m+2}(t)\Big),
\end{align*}
where the last inequality is deduced  from Assumption {\bf (A4(m+2,2))}.

\noindent The Cauchy Schwarz inequality, integration by parts and Assumption 
{\bf (A3(m+1))} imply that for fixed $r=0, \cdots, d_1$
and $l\geq 1$,
\begin{align*}
 |\big( S_l(s) \zeta_\ee(s) , L_r Z_\ee(s) & + F_r\big)_m|  + | \big( \zeta_\ee(s), L_r[S_l(s) Z_\ee(s) + G_l(s)]\big)_m| \\
& \leq C  \|\zeta_\ee(s)\|_m \big[ \|Z_\ee(s)\|_{m+3} + \|F_r\|_{m+1} + \|G_l(s)\|_{m+2} \big] .
\end{align*}
Therefore, the Burkholder Davies Gundy inequality  and Assumption {\bf (A1)} imply  that
for any stopping time $\tau\leq T$, we have:
\begin{align*}
& \EE\Big(\sup_{t\in[0, \tau]}  |J^3_{\ee, t}|^{p/2}\Big) \\
&  \leq C A^{p/2} \EE\Big( \int_0^\tau \|\zeta_\ee(s)\|_m^2 \Big[
\|Z_\ee(s)\|_{m+3}^2  + \sum_{0 \leq r\leq   d_1} \|F_r\|_{m+1}^2 \\
&\qquad
+ \sup_{s\in [0,T]} \sum_{l\geq 1}  \|G_l(s)\|_{m+2}^2\Big]  d\langle M^l\rangle_s  \Big)^{p/4}
\\
&\leq    C A^{p/2} \EE\Big[\Big(  \sup_{s\in[0, \tau]} \|Z_\ee(s)\|_{m+3}^{p/2} + \Big| \sum_{r=0}^{d_1} \|F_r\|_{m+1}^2\Big|^{p/4}\!\!
+ \sup_{s\in [0,T]} \Big| \sum_{l \geq 1} \|G_l(s)\|_{m+2}^2\Big|^{p/4} \Big)\\
&\qquad\qquad\qquad  \times
 \Big(\int_0^\tau \|\zeta_\ee(s)\|_m^2 dV_s\Big)^{p/4}\Big]\\
&\leq C A^p \; \EE \Big[ \Big(  \sup_{s\in[0,T]} \|Z_\ee(s)\|_{m+3}^{p}\Big) +  \Big|\tilde{K}  \sum_{r=0}^{d_1}
 \|F_r\|_{m+1}^2\Big|^{p/2}\!\!
+ \sup_{s\in [0,T]} \Big| \sum_{l\geq 1} \|G_l(s)\|_{m+2}^2\Big|^{p/2} \Big] \\
&\qquad +  \frac{1}{8p} \EE\Big( \sup_{s\in [0,\tau]} \|\zeta_\ee(s)\|_m^p\Big) + C \EE\int_0^\tau \|\zeta(s)\|_m^p dV_s.
\end{align*}
Using  the condition  \eqref{A6} and Theorem \ref{Hmestimates} with $m+3$, we deduce yhat
\[ \EE\Big(  \sup_{s\in[0, \tau]} |J_{\ee,t}^3|^{p/2} \Big) \leq \frac1{8p} \EE\Big(\sup_{t\in[0, \tau]} \|\zeta_\ee(t)\|_m^p\Big)
+ C \EE\int_0^\tau \|\zeta_\ee(s)\|_m^p dV_s + C A^p.\]
Therefore,
\begin{align*}
  \EE\Big\{ \sup_{t\in[0, \tau]}  \Big(J_{\ee,t}& -\int_0^t [\zeta_\ee(s)]^p_m(s) dV_s\Big)_+^{p/2} \Big\}
\leq  1/(8p) \EE\Big( \sup_{t\in[0, \tau]} \|\zeta_\ee(t)\|_m^2 \Big) \\
& + C \EE\int_0^\tau \|\zeta_\ee(s)\|_m^p dV_s
 + C A^p
+ C \EE\Big(\sup_{t\in[0, \tau]} |J^4_{\ee,t}|^{p/2}\Big).
\end{align*}
Integrating by parts we obtain
\begin{equation} \label{IPPJ4}
2J^4_{\ee,t}=\sum_{r,\tilde{r}} \big( L_{\tilde{r}} Z_\ee(t) + F_{\tilde{r}}\, ,
\, L_r Z_\ee(t) + F_r \big)_m A^r_t A^{\tilde{r}}_t
- \sum_{j=1}^{3} J^{4,j}_{\ee,t},
\end{equation}
where
\begin{align*}
J^{4,1}_{\ee,t}&=2\sum_{r,\tilde{r}} \sum_{\bar{r}} \int_0^t A^r_s A^{\tilde{r}}_s\,
 \big\<  L_{\tilde{r}}[ L_{\bar{r}} Z_\ee(s) + F_{\bar{r}}] \, , \,
L_r Z_\ee(s) + F_r \big\>_m dV^{\bar{r}}_{\ee,s}, \\
J^{4,2}_{\ee, t}&=2\sum_{r,\tilde{r}} \sum_{l\geq 1} \int_0^t  A^r_s A^{\tilde{r}}_s\,
\big(L_{\tilde{r}} [S_l(s) Z_\ee(s) + G_l(s)]\, , \, L_r Z_\ee(s) + F_r\big)_m dM^l_s,\\
J^{4,3}_{\ee, t}&= \sum_{r,\tilde{r}} \sum_{l\geq 1} \int_0^t  A^r_s A^{\tilde{r}}_s\,
\big(L_{\tilde{r}} [S_l(s) Z_\ee(s) + G_l(s)]\, , \, L_{r} [S_l(s) Z_\ee(s) + G_l(s)]\big)_m d\<M^l\>_s.
\end{align*}
Integration by part, Assumption {\bf (A3(m+2))}, the Cauchy-Schwarz and Young inequalities yield
\begin{align*}
| \big\<  L_{\tilde{r}}[ L_{\bar{r}} Z_\ee(s) + F_{\bar{r}}] \, , \,
L_r Z_\ee(s) + F_r \big\>_m |&\leq C\big[ \|Z_\ee(s)\|_{m+3}^2 + \|F_{\bar{r}}\|_{m+2}^2 + \|F_r\|_m^2\big] ,\\
| \big(L_{\tilde{r}} [S_l(s) Z_\ee(s) + G_l(s)] ,  L_r Z_\ee(s) + F_r\big)_m |&\leq C
\big[ \|Z_\ee(s)\|_{m+3}^2 + \|G_l(s)\|_{m+2}^2 + \|F_r\|_m^2\big] .
\end{align*}
Hence, using Theorem \ref{Hmestimates}, \eqref{A6}, Assumptions {\bf (A1)} and  {\bf (A4(m+2))} we deduce
\[ \EE\big( \sup_{t\in[0, \tau]} \big| J^{4,1}_{\ee,t}+J^{4,3}_{\ee,t} \big|^{p/2} \Big) \leq C A^p.\]
Finally, the Burkholder-Davies-Gundy inequality implies that
\begin{align*}
 \EE\Big( &\sup_{t\in[0, \tau]} |J^{4,2}_{\ee, t}|^{p/2}\Big) \\
& \leq C A^p \EE\Big| \int_0^\tau \big|
\big(L_{\tilde{r}} [S_l(s) Z_\ee(s) + G_l(s)]\, , \, L_r Z_\ee(s) + F_r\big)_m |^2 d\<M^l\>_s \Big|^{p/4} \leq C A^p.
\end{align*}
Hence, $\EE\big( \sup_{t\in[0, \tau]} |J^4_{\ee,t}|^{p/2}\big) \leq C A^p$, which concludes the proof.
\end{proof}

Using Lemmas \ref{Cm}-\ref{J-int}, we now prove Theorem \ref{Thsplitting1} for time-independent coefficients. 

\noindent{\it Proof of Theorem \ref{Thsplitting1}}
Apply the operator $D^\alpha$ to both sides of \eqref{zetaeebis} and use the It\^o formula for
 $\|D^\alpha \zeta_\ee(t)\|_0^2$. This yields
\begin{align*}
d\|\zeta_\ee(t)\|_m^2=& 2\sum_{|\alpha|\leq m} \sum_r
 \big[ \big\< \zeta_\ee(t),L_r \zeta_\ee(t)\big\>_m \rho_{\ee,t}^r dV_t +
 \big(\zeta_\ee(t), L_r Z_\ee(t)+F_r\big)_m dA^r_t \big] \\
&  + \sum_{|\alpha|\leq m} \sum_{l\geq 1}
\big[ \|S_l(t) \zeta_\ee(t)\|_m^2 q^l_t dV_t +2\big( \zeta_\ee(t), S_l(t)\zeta_\ee(t)\big)_m dM^l_t\big],
\end{align*}
where $\< Z,\zeta\>_m$ denotes the duality between $H^{m-1}$ and $H^{m+1}$ which extends the
scalar product in $H^m$. Using  \eqref{p} we deduce that
\[ d\|\zeta_\ee(t)\|_m^2\leq -2 [\zeta_\ee(t)]_m^2 dV_t + C \|\zeta_\ee(t)\|^2_m dV_t + 2 dJ_{\ee,t}
+ 2\!\sum_{l\geq 1}\! \big(\zeta_\ee(t), S_l(t) \zeta_\ee(t)\big)_m dM^l_t,\]
where $J_{\ee,t}$ is defined by \eqref{Jeet}.
Using \eqref{majoSl} we deduce that
$\big|\big(\zeta_\ee(t), S_l(t) \zeta_\ee(t)\big)_m \big| \leq C \|\zeta_\ee(t)\|_m^2$.  Thus Lemma \ref{J-int},
the Burkholder Davies Gundy inequality and Assumption {\bf (A1)} yield for any stopping time $\tau \leq T$
\begin{align*}
&\EE\Big( \sup_{t\in[0, \tau]} \|\zeta_\ee(t)\|^p_m\Big) \leq C \EE \|Z_{1,0} - Z_{0,0}\|_m^p
 + p \EE \Big( \sup_{t\in[0, \tau]} J_{\ee,t} - \int_0^t [\zeta_\ee(s)]_m^2 dV_s \Big)_+^{p/2} \\
&\qquad  + C_p \EE\Big| \int_0^\tau \|\zeta_\ee(s)\|_m^4 d V_s\Big|^{p/4}
+  C_p \EE\Big| \int_0^\tau \|\zeta_\ee(s)\|_m^2 d V_s\Big|^{p/2}
\\
 & \quad \leq C \EE \|Z_{1,0} - Z_{0,0}\|_m^p + \frac{1}{4} \EE\Big( \sup_{t\in[0, \tau]} \|\zeta_\ee(t)\|_m^p
 \Big) + C\Big( A^p + \EE\int_0^\tau \|\zeta_\ee(s)\|_m^p dV_s \Big) \\
 &\qquad  + C \EE\Big[ \sup_{t\in[0, \tau]} \|\zeta_\ee(t)\|_m^{p/2} \Big( \int_0^\tau \|\zeta_\ee(s)\|_m^2 dV_s \Big)^{p/4} \Big]
+ C_p \EE \int_0^\tau \|\zeta_\ee(s)\|_m^p d V_s
\\
 & \quad \leq C \EE \|Z_{1,0} - Z_{0,0}\|_m^p + C A^p +  \frac{1}{2} \EE\Big( \sup_{t\in[0, \tau]} \|\zeta_\ee(t)\|_m^p \Big)
+ C \EE\Big( \int_0^\tau \|\zeta_\ee(s)\|_m^p dV_s \Big),
\end{align*}
where the last upper estimate follows from the Young inequality. \\
Let $\tau_N = \inf\{ t : \|\zeta_\ee(t)\|_m^p \geq N\} \wedge T$; then the Gronwall Lemma implies that
\[  \EE\Big( \sup_{t\in[0, \tau_N]} \|\zeta_\ee(t)\|_m^p \Big) \leq C  \big( \EE\|Z_{1,0} - Z_{0,0}\|_m^p
+ A^p\big) .
\]
Letting $N\to \infty$ concludes the proof.
\qed

\subsection{Case of the time dependent coefficients}\label{timedepen}
In this section, we prove a convergence result similar to that in Theorem \ref{Thsplitting1}
when the coefficients of the operators depend on time. Integration by parts in Lemma \ref{J-int}
will give extra terms, which require more assumptions to be dealt with.

\noindent {\bf Assumption (A5(m))} There exists an integer number  $d_{2}$,  an $\big({\mathcal F}_{t}\big)$-continuous martingale
$N_{t}= (N_{t}^{1}, \cdots, N_{t}^{d_{2}})$ and,  for each $\gamma=0, \cdots, d_{2}$
a bounded predicable process   $  h_{\gamma} : \Omega\times (0,T]\times \mathbb{R}^d\to \mathbb{R}^N$
for some $N$ depending on $d$ and $d_{1}$
 such that
\begin{eqnarray*}
 h_{\gamma}(t,x) &:= & ( a^{j,k}_{\gamma,r}(t,x), b^{j,k}_{\gamma,r}(t),
a^{j}_{\gamma,r}(t,x), a_{\gamma,r}(t,x), b_{\gamma,r}(t,x),  F_{\gamma,r}(t,x) ;\\
 && 1\leq j,k \leq d, \;0\leq r  \leq d_{1},\;
1\leq  \gamma \leq  d_{2}),
\end{eqnarray*}
for  some symmetric  non negative matrices $(a^{j,k}_{\gamma,r}(t,x), j,k=1, \cdots, d)$
and $(b^{j,k}_{ \gamma,r}(t),$ $ j,k=1,\cdots ,d)$.
Furthermore, we suppose that
for every $\omega\in \Omega$ and $t\in [0,T]$, the maps  $h_{\gamma}(t,\cdot)$ are of class
${\mathcal C}^{m+1}$ such that for some constant $K$ we have
$|D^{\alpha}h_{\gamma}(t,\cdot)|\leq K$  for any multi-index $\alpha$ with $|\alpha| \leq m+1$ and such that
for $t\in [0,T]$,
\begin{eqnarray*}
\sum_{\gamma=1}^{d_{2}}  d\langle N^{\gamma}\rangle_{t} &\leq& dV_{t}, \\
h(t,x)&=&h(0,x) + \int_{0}^{t} h_{0}(s,x) dV_{s}
+ \sum_{\gamma=1}^{d_{2}}\int_{0}^{t} h_{\gamma}(s,x) dN^{\gamma}_{s}.
\end{eqnarray*}
For $\gamma=0, \cdots, d_{2} $,
$r=0, \cdots, d_{1}$,  let $L_{\gamma,r}$ be the  time dependent  differential operator defined by:
\begin{eqnarray*}
L_{\gamma,r} Z(t,x)&=&\sum_{j,k=1}^{d} D_k\Big( \big[a^{j,k}_{\gamma,r}(t,x) + i b^{j,k}_{\gamma,r}(t)\big]
 D_jZ(t,x) \Big)
+ \sum_{j=1}^{d} a^j_{\gamma,r}(t,x) D_j Z(t,x)   \\
&& + \big[ a_{\gamma,r}(t,x)+ib_{\gamma,r}(t,x)\big] Z(t,x) .
\end{eqnarray*}
For $r=0, \cdots, d_1$, let
\[ L_rZ(t,x)=L_r(0) Z(0,x) + \int_0^t L_{0,r} Z(s,x) dV_s + \sum_{\gamma = 1}^{d_2} L_{\gamma,r} Z(s,x) dN^\gamma_s,\]
and $F_r(t,x)=F_r(0,x)+\int_0^t F_{0,r}(s,x) dV_s + \sum_{\gamma = 1}^{d_2} F_{\gamma,r}(s,x) dN^\gamma_s$.
We then have the following abstract convergence result which extends Theorem\ref{Thsplitting1}.
\begin{theorem}\label{Thsplitting2}
Let  Assumptions {\bf (A(1)), (A(2))}, {\bf (A3(m+3))}, {\bf (A4(m+3,p))} and {\bf (A5(m))} be satisfied
and suppose that 
\begin{equation} \label{A6bis}
\EE\Big( \sup_{t\in [0,T]} \Big| \sum_{r=0}^{d_1} \|F_r(t)\|_{m+1}^2\Big|^{p/2}
+ \sup_{t\in [0,T]} \Big| \sum_{l\geq 1} \|G_{m+2}(t)\|^2_{m+2} \Big|^{p/2} <\infty.
\end{equation}
Then there exists some constant $C>0$ such that
\begin{equation}\label{ineq-rate} \EE \Big( \sup_{t\in [0,T]}   \| Z_{1}(t) - Z_{0}(t)\|_{m}^{p}) \leq
 C \Big(\EE\big( \|Z_{1}(0) - Z_{0}(0)\|_{m}^{p} \big) + A^{p}\Big)  .
 \end{equation}
\end{theorem}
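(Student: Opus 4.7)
The plan is to mirror the proof of Theorem \ref{Thsplitting1} and track carefully where time-independence of the coefficients was used; the only place is Lemma \ref{J-int}, more precisely the integration by parts identity \eqref{IPPJee} for $J_{\ee,t}$ and the analogous identity \eqref{IPPJ4} for $J^{4}_{\ee,t}$. So I would start as before: interpolate $V^{r}_{\ee,t}=\ee V^{r}_{1,t}+(1-\ee)V^{r}_{0,t}$, define $\rho_{\ee,t}^{r}$ and the rescaled coefficients, invoke Theorem \ref{diff1} to reduce the problem to the a priori bound \eqref{majoder1} on $\zeta_{\ee}$ solution of \eqref{zetaeebis}. The It\^o formula for $\|\zeta_{\ee}(t)\|_{m}^{2}$, combined with the stochastic parabolicity estimate \eqref{p} of Lemma \ref{tech} and the bound $|(\zeta_{\ee},S_{l}\zeta_{\ee})_{m}|\leq C\|\zeta_{\ee}\|_{m}^{2}$ from \eqref{majoSl}, reduces everything, exactly as in Section \ref{speedtimeinde}, to a moment estimate of the form \eqref{momentJ} on the term $J_{\ee,t}=\sum_{r}\int_{0}^{t}\big(\zeta_{\ee}(s),L_{r}(s)Z_{\ee}(s)+F_{r}(s)\big)_{m}\,dA^{r}_{s}$.

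The heart of the matter and the main obstacle is the integration by parts on $J_{\ee,t}$. In the time-independent case one applied It\^o's formula to $\big(\zeta_{\ee}(t),L_{r}Z_{\ee}(t)+F_{r}\big)_{m}\,A^{r}_{t}$ viewing $L_{r}Z_{\ee}+F_{r}$ as a semimartingale whose differential is governed by the evolution of $Z_{\ee}$ alone. Assumption \textbf{(A5(m))} is introduced precisely so that $h(t,\cdot)$, and hence each $L_{r}(t)Z_{\ee}(t)+F_{r}(t)$, remains a semimartingale in the time-dependent case: its drift is driven by $h_{0}$ against $dV_{s}$ and its martingale part by $h_{\gamma}$ against $dN^{\gamma}_{s}$. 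Applying It\^o's product formula to $\big(\zeta_{\ee}(t),L_{r}(t)Z_{\ee}(t)+F_{r}(t)\big)_{m}A^{r}_{t}$ reproduces the four terms $J^{1},\dots,J^{4}$ of the time-independent proof, plus two genuinely new contributions: a bounded variation term
\[
\tilde{J}^{5}_{\ee,t}=\sum_{r}\int_{0}^{t}A^{r}_{s}\big(\zeta_{\ee}(s),L_{0,r}(s)Z_{\ee}(s)+F_{0,r}(s)\big)_{m}\,dV_{s},
\]
a stochastic term
\[
\tilde{J}^{6}_{\ee,t}=\sum_{r}\sum_{\gamma=1}^{d_{2}}\int_{0}^{t}A^{r}_{s}\big(\zeta_{\ee}(s),L_{\gamma,r}(s)Z_{\ee}(s)+F_{\gamma,r}(s)\big)_{m}\,dN^{\gamma}_{s},
\]
together with joint quadratic-variation brackets between $d\zeta_{\ee}$ and $d(L_{r}Z_{\ee}+F_{r})$ of the type $\sum_{l,\gamma}\int_{0}^{t}A^{r}_{s}\big(S_{l}(s)\zeta_{\ee}(s),L_{\gamma,r}(s)S_{l}(s)Z_{\ee}(s)+\cdots\big)_{m}\,d\langle M^{l},N^{\gamma}\rangle_{s}$.

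I would then estimate these new terms in precisely the same style as $J^{1},J^{2},J^{3}$: for $\tilde{J}^{5}_{\ee,t}$ and the bracket terms, the uniform bound on $h_{\gamma}$ together with $\sum_{\gamma}d\langle N^{\gamma}\rangle_{t}\leq dV_{t}$ and \eqref{V} gives Cauchy--Schwarz + Young estimates bounded by $\int_{0}^{t}\|\zeta_{\ee}\|_{m}^{2}dV_{s}+CA^{2}\int_{0}^{t}(\|Z_{\ee}\|_{m+3}^{2}+\|F_{r}\|_{m+2}^{2}+\sum_{\gamma}\|F_{\gamma,r}\|_{m+1}^{2})dV_{s}$ plus a $G_{l}$-analogue; for the martingale term $\tilde{J}^{6}_{\ee,t}$ I would apply the Burkholder--Davis--Gundy inequality exactly as in the control of $J^{3}_{\ee,t}$, so that after using \eqref{A6bis}, the $a\,priori$ bound of Theorem~\ref{Hmestimates} at order $m+3$, and absorption of $\frac1{8p}\EE\sup\|\zeta_{\ee}\|_{m}^{p}$ on the left, everything reduces to $C(A^{p}+\EE\int_{0}^{\tau}\|\zeta_{\ee}\|_{m}^{p}dV_{s})$. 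The same argument handles the analogue of $J^{4}_{\ee,t}$ via \eqref{IPPJ4}, again with extra $h_{0}$ and $h_{\gamma}$ driven terms. Substituting the resulting analogue of \eqref{momentJ} into the It\^o bound for $\|\zeta_{\ee}\|_{m}^{p}$ and applying Gronwall's lemma after localization by $\tau_{N}=\inf\{t:\|\zeta_{\ee}(t)\|_{m}^{p}\geq N\}\wedge T$ as in the proof of Theorem~\ref{Thsplitting1} yields \eqref{ineq-rate}. The main difficulty is purely bookkeeping: identifying all the new semimartingale-product and cross-bracket terms produced by time-dependence and checking that each of them can be controlled by $A^{2}$ times a quantity whose $p/2$-moment is finite under \textbf{(A5(m))} and \eqref{A6bis}.
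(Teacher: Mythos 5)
Your proposal follows essentially the same route as the paper: the reduction via Theorem \ref{diff1}, Lemmas \ref{Cm} and \ref{tech} unchanged, and the extension of Lemma \ref{J-int} by using Assumption \textbf{(A5(m))} to treat $L_r(t)Z_\ee(t)+F_r(t)$ as a semimartingale, so that the integration by parts in \eqref{IPPJee} and \eqref{IPPJ4} produces exactly the additional drift, $dN^\gamma$-martingale and cross-bracket terms (the paper's $J^5,J^6,J^7$ and $J^{4,4},\dots,J^{4,7}$), estimated by Cauchy--Schwarz/Young, Burkholder--Davis--Gundy, \eqref{A6bis} and the order-$(m+3)$ a priori bound, then Gronwall after localization. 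Only a minor bookkeeping slip: the cross-bracket integrand should be $\big(S_l(s)\zeta_\ee(s),L_{\gamma,r}(s)Z_\ee(s)+F_{\gamma,r}(s)\big)_m\,d\langle M^l,N^\gamma\rangle_s$ rather than involving $L_{\gamma,r}S_lZ_\ee$, which does not affect the argument.
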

\begin{proof} 
Since Lemmas \ref{Cm} and \ref{tech} did not depend on the fact that the coefficients
are time-independent, only Lemma \ref{J-int} has to be extended.
For $t\in [0,T]$, let
\[ J_{\ee,t}=\sum_{r=0}^{d_1} \int_0^t \big( \zeta_\ee(s) \, ,\, L_r(s) Z_{\ee}(s) + F_r(s) \big)_m
dA^r_s.\]
Since $A^{r}_{0}=0$ for $r=0, \cdots, d_{1}$, the integration by parts formula \eqref{IPPJee} has
to be replaced by
\[
 J_{\ee,t} = \sum_{r=0}^{d_1}  \big( \zeta_\ee(t) \, ,\, L_r(t) Z_\ee(t) + F_r(t) \big)_m A^r_t - \sum_{k=1}^{7} J^k_{\ee,t}, \;t\in [0,T] ,
\]
where the additional terms on the right hand-side are defined  for $t\in [0,T] $ as follows:
\begin{align*}
J^5_{\ee,t} = & \sum_{r} A^r_s \big( \zeta_\ee(s) , L_{0,r} Z_\ee(s) + F_{0,r} \big)_m
 dV_s, \\
J^6_{\ee,t} = & \int_0^t \sum_r A^r_s \sum_{\gamma=1}^{d_{2}}
 \big( \zeta_\ee(s)   , L_{\gamma,r} Z_{\ee}(s) + F_{\gamma,r}(s)  \big)_m dN^{\gamma}_{s}, \\
J^7_{\ee,t}=&  \int_0^t \sum_r  A^r_s  \sum_{l\geq 1} \sum_{\gamma=1}^{d_{2}}
\big( S_l(s) \zeta_\ee(s)  , L_{\gamma,r} Z^{\ee}(s) + F_{\gamma,r}(s)  \big)_m
d\langle M^{l}, N^{\gamma}\rangle_{s}.
\end{align*}
 Arguments similar to those used in the proof of Lemma \ref{J-int}, using integration by parts and
 the regularity assumptions of the coefficients,  prove that for $k=5,6$ there exists a constant $C>0$
 such that for any stopping time $tau\leq T$ we have:
\begin{align*} \EE\Big( \sup_{t\in [0,\tau]}  | J^{k}_{\ee,t}  |^{p/2 }\Big) & \leq
C A^{p/2} \EE\Big(\sup_{t\in [0,\tau]}   \|\zeta_{\ee}(t)\|_{m}^{p/2}
\sup_{t\in [0,\tau]}    \big(      \|Z_{\ee}(t) \|_{m_+2} +C \big)^{p/2 }\Big) \\
& \leq \frac{1}{24p } \EE\Big(  \sup_{t \in [0,\tau]}  \|\zeta_{\ee}(t)\|_{m}^{p} \Big) + C A^{p},
\end{align*}
 where the last inequality  follows from the Young inequality. Furthermore, the Burkholder-Davies-Gundy
 inequality and the upper estimates of the quadratic variations of the martingales $N^{\gamma} $ yield
 for every $\gamma = 1, \cdots, d_{2}$, $r=0,  \cdots, d_{1}$ and $\tau\in [0,T]$,
 \begin{align*} \EE\Big(\sup_{t\in [0,\tau]} | J^{7}_{\ee,t}  |^{p/2 } \Big)  \leq &  C
 \EE\Big( \int_{0}^{\tau}  \big| A^{r}_{s} \big(  \zeta_{\ee}(s) , L_{\gamma,r} Z_{\ee}(s) +
 F_{\gamma,r}(s) \big)_{m}\big|^{2} dV_{s} \Big)^{p/4} \\
 &  \leq C  A^{p/2} \EE\Big(\sup_{t\in [0,\tau]}   \|\zeta_{\ee}(t)\|_{m}^{p/2}
\sup_{t\in [0,\tau]}    \big(      \|Z_{\ee}(t) \|_{m_+2} +C \big)^{p/2 }\Big).
 \end{align*}
 Hence, the proof will completed by  extending the upper estimate \eqref{IPPJ4}
 as follows:
 \[
2J^4_{\ee,t}=\sum_{r,\tilde{r}} \big( L_{\tilde{r}} Z_\ee(t) + F_{\tilde{r}}(t)\, ,
\, L_r Z_\ee(t) + F_r (t)\big)_m A^r_t A^{\tilde{r}}_t
- \sum_{j=1}^{7} J^{4,j}_{\ee,t},
\]
where for $j=4, \cdots,7$ we have:
\begin{align*}
J^{4,4}_{\ee,t}&=2\sum_{r, \tilde{r}}  \int_0^t A^r_s A^{\tilde{r}}_s\,
 \big(  L_{\tilde{r},0 }(s) Z_\ee(s) + F_{\tilde{r},0}(s) \, ,\, L_r(s) Z_\ee(s) + F_r(s) \big)_m  dV_s , \\
J^{4,5}_{\ee, t}&=\sum_{r,\tilde{r}} \sum_{\gamma, \tilde{\gamma}}  \int_0^t  A^r_s A^{\tilde{r}}_s\,
\big(L_{\tilde{\gamma}, \tilde{r}}(s)  Z_\ee(s) + F_{\tilde{\gamma},\tilde{r}}(s) \, , \,
 L_{\gamma ,r}  Z_\ee(s) + F_{\gamma, r} (s)\big)_m d\langle N^{\tilde{\gamma}} , N^\gamma\rangle_s ,\\
J^{4,6}_{\ee, t}&= 2 \sum_{r,\tilde{r}} \sum_\gamma \sum_{l\geq 1} \int_0^t  A^r_s A^{\tilde{r}}_s\,
\big(L_{\gamma, \tilde{r}} (s) Z_\ee(s) + F_{\gamma, \tilde{r}}(s) \, ,\\
&\qquad\qquad  \, L_{r}(s) [ S_l(s) Z_\ee(s)
+ G_l(s) ] \big)_m d\<N^\gamma, M^l\>_s,\\
J^{4,7}_{\ee, t}&= 2 \sum_{r,\tilde{r}} \sum_\gamma  \int_0^t  A^r_s A^{\tilde{r}}_s\,
\big(L_{\gamma, \tilde{r}}(s)  Z_\ee(s) + F_{\gamma, \tilde{r}}(s) \, , \, L_{r} (s) Z_\ee(s) + F_r(s) \big)_m dN^\gamma_s .
\end{align*}
We obtain upper estimates of the terms $\EE\big( \sup_{t\in [0,\tau]} | J^{4,k}_{\ee,t}|^{p/2}\big)$ for $k=4, \cdots, 7$
 by arguments similar to that used for $k=1,\cdots, 3$, which implies $\EE\big( \sup_{t\in [0,\tau]} |J^4_{\ee,t}|^{p/2}
 \big) \leq C A^p$. This concludes the proof.
\end{proof}

\section{Speed of convergence for the spliting method}    \label{sspeed}
The aim of this section is to show how the abstract convergence results obtained in Section \ref{speedabstract}
yield  the convergence of a splitting method and extends the corresponding results from \cite{GK}. The proof, which is
very similar to that in \cite{GK} is briefly sketched for the reader's convenience.

\noindent {\bf Assumption (A)} {\it
For $r=0, \cdots, d_1$, let  $L_r$  be defined by \eqref{Lr} and   for $l\geq 1$
let $S_l$  be defined  by \eqref{Sl}. Suppose that the Assumptions {\bf (A2)} and
{\bf(A3(m+3))} are satisfied, and that  for every $\omega\in \Omega$  and $r,l$, $F_r(t)=F_r(t,\cdot)$
is a weakly continuous $H^{m+3}$-valued function and  $G_l(t)=G_l(t,\cdot)$ is a
weakly continuous $H^{m+4}$-valued function. Suppose furthermore that the initial condition $Z_0\in L^2(\Omega ; H^{m+3})$
is ${\mathcal F}_0$-measurable,  that $F_r$ and $G_l$ are predictable and that for
some constant $K$ one has
\[
\EE\Big( \sup_{t\in [0,T]} \sum_{r=0}^{d_1}  \|F_r(t,\cdot)\|_{m+3}^p
+ \sup_{t\in [0,T]} \sum_{l\geq 1}  \|G(t,\cdot)\|_{m+4}^p + \|Z_0\|_{m+3}^p\Big) \leq K.
\]
Let $V^0=(V^0_t, t\in [0,T])$ be a predictable, continuous increasing process such that $V^0_0=0$ and that there exists
a constant $K$ such that $V^0_T + \sum_{l\geq 1} \langle M^l\rangle_T \leq K$. Finally suppose that
the following stochastic parabolicity condition holds.\\ For every $(t,x)\in [0,T]\times \RR^d$,
every $\omega \in \Omega$ and every $\lambda \in \RR^d$,
\[ \sum_{j,k=1}^{d} \lambda_j \lambda_k \Big[ 2 a_0^{j,k}(t,x) dV^0_t + \sum_{l\geq 1}
 \s_l^j(t,x) \s_l^k(t,x)  d\langle M\rangle_t \Big] \geq 0
\]
in the sense of measures on $[0,T]$.}

Let  $Z$ be the process solution to the evolution equation:
\begin{align}\label{eqZ}
dZ(t,x) = &  \big( L_0 Z(t,x) + F_0(t,x)\big) dV^0_t + \sum_{r=1}^{d_1} \big( L_r Z(t,x) + F_r(t,x)\big) dt
\nonumber  \\
&  + \sum_{l\geq 1} \big( S_l Z(t,x) + G_l(t,x)\big) dM^l_t
\end{align}
with the initial condition $Z(0,\cdot)=Z_0$. Then Theorem \ref{Hmestimates} proves the existence and uniquness
 of the solution to \eqref{eqZ}, and that
\[ \EE\Big( \sup_{t\in [0,T]} \|Z(t)\|_{m+3}^p \Big) \leq C\]
for some constant $C$ which depends only on $d$, $d_1$,  $K$, $m$, $p$ and $T$.

For every integer $n\geq 1$ let ${\mathcal T}_n  = \{ t_i:=iT/n , i=0, 1, \cdots, n\}$
denote a grid on the interval $[0,T ]$ with constant mesh
$\delta = T/n$. For $n\geq 1$, let $Z^{(n)}$ denote the approximation of $Z$ defined for $t\in {\mathcal T}_n$
using the following splitting method: $Z^{(n)}n(0)=0$ and for $i=0, 1, \cdots, n-1$, let
\begin{equation}  \label{discret}
Z^{(n)} (t_{i+1}) := P_\delta^{(d_1)} \cdots P_\delta^{(2)} P_\delta^{(1)} Q_{t_i, t_{i+1}} Z^{(n)}(t_i),
\end{equation}
where for $r=1, \cdots, d_1$ and $t\in [0,T]$, $P^{(r)}_t \psi$ denotes  the solution $\zeta_r$  of the evolution equation
\[ d\zeta_r(t,x) = \big( L_r \zeta_r(t,x) + F_r(t,x) \big) dt \quad \mbox{\rm and } \zeta_r(0,x)=\psi(x),\]
and for $s\in [0,t]\leq T$, $Q_{s,t}\psi$ denotes the solution $\eta$ of the evolution equation defined on $[s,T]$ by
the "initial" condition $ \eta(s,x)=\psi(x)$ and for $t\in [s,T]$ by:
\[ d\eta(t,x) = \big( L_0 \eta(t,x) + F_0(t,x) \big) dV^0_t + \sum_{l\geq 1} \big( S_l \eta(t,x) + G_l(t,x)\big) dM^l_t .\]
The following theorem gives the speed of convergence of this approximation.
\begin{theorem}\label{thspeed1}
Let $a^{j,k}_r, b^{j,k}_r, a^j_r, a_r, b_r, \s_l^j,  \s_l, \tau_l, F_r, G_l$ satisfy the Assumption {\bf (A)}.
Suppose that $a^{j,k}_r, b^{j,k}_r, a^j_r, a_r, b_r$ and $F_r$ are time-independent.
Then there exists a constant $C>0$ such that
\[  \EE\Big( \sum_{t\in {\mathcal T}_n} \|Z^{(n)}(t) -Z(t)\|_m^p \Big) \leq C n^{-p},\;\; \mbox{for every $n\geq 1$}. \]
\end{theorem}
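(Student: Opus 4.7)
Following \cite{GK}, the strategy is to identify the splitting approximation $Z^{(n)}$ at grid points with the restriction to $\mathcal{T}_n$ of a single continuous process $\bar Z$ defined on $[0,T]$ which solves an evolution equation of the form \eqref{Z01} with suitably modified increasing processes $\bar V^r$ and time-changed martingales $\bar M^l$. Once this identification is made, Theorem \ref{Thsplitting1} can be applied to the pair $(Z,\bar Z)$, where $Z$ solves \eqref{eqZ} with the given $V^0_t$, $M^l_t$ and with $V^r_t=t$ for $r\geq 1$. The common initial condition $Z_0$ makes the first term in the estimate of Theorem \ref{Thsplitting1} vanish, leaving only the $A^p$ contribution, which will be shown to be controlled by $(T/n)^p$.

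\textbf{Construction of the auxiliary process.} On each elementary interval $[t_i,t_{i+1}]$, I subdivide into $d_1+1$ equal sub-intervals $I_r^{(i)}$, $r=0,1,\dots,d_1$, of length $\delta/(d_1+1)$. For $r=1,\dots,d_1$, the process $\bar V^r$ is piecewise linear, increasing at constant rate $d_1+1$ on $I_r^{(i)}$ and constant elsewhere, so that its increment over $[t_i,t_{i+1}]$ equals $\delta$, matching that of $V^r_t=t$. The process $\bar V^0_t$ is obtained from $V^0_t$ by a piecewise time change that compresses the entire increment $V^0_{t_{i+1}}-V^0_{t_i}$ into $I_0^{(i)}$, and the martingales are time-changed analogously so that $\bar M^l$ accumulates the full increment $M^l_{t_{i+1}}-M^l_{t_i}$ on $I_0^{(i)}$ and is constant on $I_r^{(i)}$ for $r\geq 1$. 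Since on each $I_r^{(i)}$ only one of the operators is active, $\bar Z$ restricted to $I_r^{(i)}$ solves exactly the corresponding sub-equation of the splitting, and therefore $\bar Z(t_i)=Z^{(n)}(t_i)$ for every $i$.

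\textbf{Quantitative estimate and conclusion.} By construction, on each $[t_i,t_{i+1}]$ both $V^r$ and $\bar V^r$ advance by the same total amount, so that
\[
A=\sup_{\omega}\sup_{t\in[0,T]}\max_{0\leq r\leq d_1}\big|V^r_t-\bar V^r_t\big|\leq C\delta = CT/n.
\]
Assumption \textbf{(A)} implies that the hypotheses \textbf{(A1)}--\textbf{(A3(m+3))}, \textbf{(A4(m+3,p))} and condition \eqref{A6} all hold for the pair $(V^r,\bar V^r)$: condition \textbf{(A2)} is preserved because modifying the increasing processes and the martingales by nonnegative time-changes with matching rates preserves the coercivity inequality \eqref{coercive}. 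Applying Theorem \ref{Thsplitting1} with $Z_{1,0}=Z_{0,0}=Z_0$ then yields
\[
\EE\Big(\sup_{t\in[0,T]}\|Z(t)-\bar Z(t)\|_m^p\Big)\leq C A^p\leq C n^{-p},
\]
and restricting to $t\in\mathcal T_n$ and using $\bar Z(t_i)=Z^{(n)}(t_i)$ gives the claim.

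\textbf{Main obstacle.} The delicate step is the rigorous construction of the time change for $V^0$ and for the martingales $M^l$: one must verify that $\bar M^l$ remains a square-integrable martingale with respect to a suitable filtration, that the $\bar V^r$ are predictable, and that the joint stochastic parabolicity \eqref{coercive} is preserved after the time rescaling. The time-independence of $L_r$ and $F_r$ for $r\geq 1$ is what permits the $P^{(r)}_\delta$ sub-step to be represented as a time-compressed evolution driven by a single time-independent operator, so that Theorem \ref{Thsplitting1} (rather than the heavier Theorem \ref{Thsplitting2}) applies directly; the $H^{m+3}$-regularity of the data in Assumption \textbf{(A)} then provides the ``three derivatives of slack'' needed to absorb the derivatives lost in the integration-by-parts arguments of Lemma \ref{J-int}.
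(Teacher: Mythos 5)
Your overall strategy (realize the splitting scheme as a solution of an equation of type \eqref{Z01} obtained from the original one by a time manipulation, then invoke Theorem \ref{Thsplitting1} with equal initial data and $A\le C/n$) is the right one, but the specific construction has a genuine gap: Theorem \ref{Thsplitting1} compares two solutions driven by the \emph{same} martingales $M^l$ (and the same dominating process $V$), the only difference allowed being in the finitely many increasing processes $V^r_{\ee,t}$ driving the drift, and the error is measured by $A=\sup_\omega\sup_t\max_r|V^r_{1,t}-V^r_{0,t}|$. In your construction the process $\bar Z$ is driven by the time-compressed martingales $\bar M^l$, which differ from the $M^l$ driving $Z$; this situation is simply not covered by the abstract theorem, and it could not be, at the rate $n^{-p}$: the pathwise discrepancy between a martingale and its piecewise time-compressed version over a mesh interval is of order $\delta^{1/2}$, not $\delta$, and the integration-by-parts mechanism of Lemma \ref{J-int} has no counterpart for a difference of martingale drivers. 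A second, smaller gap: even for the $r=0$ component, $|V^0_t-\bar V^0_t|$ is controlled only by the modulus of continuity of $V^0$ over intervals of length $\delta$ (uniformly in $\omega$), which Assumption {\bf (A)} does not make $O(1/n)$; so your claim $A\le C\delta$ fails for $r=0$ as well.

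The paper avoids both problems by stretching rather than compressing time. Set $d'=d_1+1$ and define on $[0,d'T]$ the deterministic, piecewise linear clock $\kappa$ which, on each block of length $d'\delta$, increases by $\delta$ during the first sub-interval of length $\delta$ and is constant afterwards. Both processes are driven by the \emph{same} time-changed noise $\tilde M^l_t=M^l_{\kappa(t)}$ and the same $\tilde V^0_{t,0}=\tilde V^0_{t,1}=V^0_{\kappa(t)}$; only the drift clocks for $r\ge 1$ differ, namely $\tilde V^r_{t,0}=\kappa(t)$ versus $\tilde V^r_{t,1}=\kappa(t-r\delta)$. Then $Z_0(d't)=Z(t)$ and $Z_1(d't)=Z^{(n)}(t)$ on the grid, the hypotheses of Theorem \ref{Thsplitting1} hold for this pair (same martingales, same $V$), and $A\le\sup_t\max_r|\kappa(t)-\kappa(t-r\delta)|\le CT/n$ deterministically because $\kappa$ is a fixed Lipschitz function. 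This is the step your compression scheme cannot reproduce, and it is where the delay of the drift clocks by $r\delta$, rather than a redistribution of the noise, does the work.
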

\begin{proof}
Let $d'=d_1+1$ and let us introduce the following time change:
\[ \kappa(t) = \left\{
\begin{array}{ll}
 0& \mbox{\rm for } t\leq 0,\\
t-k\delta d_1& \mbox{\rm for } t\in [k d' \delta , (kd'+1)\delta), \; k=0, 1, \cdots, n-1 , \\
(k+1)\delta & \mbox{\rm for } t\in [(k d'+1)  \delta , (k+1) d'\delta), \; k=0, 1, \cdots , n-1 .
\end{array}
\right.
\]
Let, for evry $t\in [0,T]$,
\[\begin{array}{lll}
\tilde{M}^l(t) = M^l_{\kappa(t)}, & \tilde{\mathcal F}_t={\mathcal F}_{\kappa(t)}, &\tilde{V}_{t,0}^0 = \tilde{V}_{t,1}^0
= V^0_{\kappa(t)}, \\
 \tilde{V}^r_{t,0}= \kappa(t), & \tilde{V}^r_{t,1} = \kappa(t-r\delta)& \mbox{\rm for } r=1, 2, \cdots,  d_1 .
\end{array} \]
For $\ee=0,1$, consider the evolution equations with the same initial condition $Z_0(0,x)=Z_1(0,x)=Z_0(x)$ and
\begin{equation} \label{Zeesplit}
dZ_\ee(t) = \sum_{r=0}^{d_1} \big( L_r Z_\ee(t) + F_r\big) d\tilde{V}_{t,\ee}^r +
\big( S_l Z_\ee(t) + G_l\big) d\tilde{M}^l_t.
\end{equation}
One easily checks that the Assumptions {\bf (A1), (A2)}, {\bf (A3(m+3)), (A4(m+3,p))} are satisfied with
the martingales $\tilde{M}_l$ and the increasing processes $\tilde{V}^r_{\ee,t}$ for $\ee=0,1$ and $r=0, 1, \cdots, d_1$.
Therefore, Theorem \ref{Hmestimates} implies that for $\ee=0,1$, the equation \eqref{Zeesplit} has a unique solution.
Furthermore, since condition \eqref{A6} holds, Theorem \ref{Thsplitting1} proves the existence of a constant $C$ such that
\[\EE\Big( \sup_{t\in [0, d'T]} \|Z_1(t) - Z_0(t)\|_m^p\Big) \leq C \sup_{t\in [0, d'T]} \max_{ 1\leq r\leq d_1} |\kappa(t+r\delta)
- \kappa(t)|^p = C T^p n^{-p}.
\]
Since by construction, we have $Z_0(d't) = Z(t)$ and $Z_1(d't)=Z^{(n)}(t)$ for $t\in {\mathcal T}_n$,
this concludes the proof.
\end{proof}
Note that the above theorem yields a splitting method for the following linear Schr\"odinger equation on $\RR^d$:
\begin{align*}
dZ(t,x) = &  \Big(  i\Delta Z(t,x) + \sum_{j=1}^{d} a^j(x) D_j Z(t,x) + F(x)\Big) dt \\
&  + \sum_{l\geq 1}
\big[ \big( \sigma_l(x) + i\tau_l(x) \big) Z(t,x) + G_l(x)\big] dM^l_t,
\end{align*}
where $a^j$, $F$ (resp.  $ \sigma_l$,  $\tau_l$ and $G_l$) belong to $H^{m+3}$ (resp.  $H^{m+4}$). Indeed, this
model is obtained with $a^{j,k}=\sigma_l^j=0$  and $b^{j,k}=1$  for $j,k=1, \cdots, d$ and $l\geq 1$.

Finally, Theorem \ref{Thsplitting2} yields the following theorem for the splitting method in the case of
time demendent coefficients. The proof, similar to that of Theorem \ref{thspeed1},  will be omitted; see also
\cite{GK}, Theorem 5.2.
\begin{theorem} \label{thspeed2}
Let $a^{j,k}_r, b^{j,k}_r, a^j_r,  \s_l^j,  \s_l, \tau_l, F, G_l$ satisfy  Assumptions  {\bf (A)}
 and {\bf (A5(m))}. For every integer $n\geq 1$ let $Z^{(n)}$ be defined by \eqref{discret}
when the operators $L_r$, $S_l$, the processes $F_r$ and $G_l$ depend on time in a predictable way.
 Then there exists a constant $C>0$ such that for every $n\geq 1$, we have:
\[  \EE\Big( \sum_{t\in {\mathcal T}_n} \|Z^{(n)}(t) -Z(t)\|_m^p \Big) \leq C n^{-p}. \]
\end{theorem}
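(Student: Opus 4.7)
The plan is to mimic exactly the argument used for Theorem \ref{thspeed1}, invoking Theorem \ref{Thsplitting2} in place of Theorem \ref{Thsplitting1}. First I would reintroduce the time dilation $\kappa:[0,d'T]\to [0,T]$ with $d'=d_1+1$, the time-changed martingales $\tilde M^l(t)=M^l_{\kappa(t)}$, filtration $\tilde{\mathcal F}_t={\mathcal F}_{\kappa(t)}$, and the two families of increasing processes $\tilde V^r_{t,\ee}$ defined exactly as in the proof of Theorem \ref{thspeed1}. All coefficients and forcing terms are time-changed by composition with $\kappa$, e.g. $\tilde a^{j,k}_r(t,x)=a^{j,k}_r(\kappa(t),x)$, $\tilde F_r(t,x)=F_r(\kappa(t),x)$, $\tilde G_l(t,x)=G_l(\kappa(t),x)$. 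For $\ee=0,1$, consider the evolution equation of the form \eqref{Zee} on $[0,d'T]$ with these time-changed coefficients, driving processes $\tilde V^r_{\ee,t}$ and $\tilde M^l$, and common initial condition $Z_{0}$. By Theorem \ref{Hmestimates} these equations have unique solutions $Z_0,Z_1$, and a direct check as in the proof of Theorem \ref{thspeed1} gives $Z_0(d't_i)=Z(t_i)$ and $Z_1(d't_i)=Z^{(n)}(t_i)$ for every $t_i\in\mathcal{T}_n$.

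The new ingredient, compared to Theorem \ref{thspeed1}, is the verification of Assumption {\bf (A5(m))} for the time-changed data. Starting from the original semimartingale decomposition
\[
h(t,x)=h(0,x)+\int_0^t h_0(s,x)\,dV_s+\sum_{\gamma=1}^{d_2} \int_0^t h_\gamma(s,x)\,dN^\gamma_s
\]
provided by the hypothesis, I would push it through the deterministic time change by setting $\tilde N^\gamma_t=N^\gamma_{\kappa(t)}$ and $\tilde V_t=V_{\kappa(t)}$. Because $\kappa$ is deterministic, continuous and non-decreasing, each $\tilde N^\gamma$ is an $(\tilde{\mathcal F}_t)$-continuous martingale with $d\langle \tilde N^\gamma\rangle_t=d\langle N^\gamma\rangle_{\kappa(t)}\le d\tilde V_t$, and the time-changed coefficient fields inherit the decomposition
\[
\tilde h(t,x)=h(0,x)+\int_0^t h_0(\kappa(s),x)\,d\tilde V_s+\sum_{\gamma=1}^{d_2} \int_0^t h_\gamma(\kappa(s),x)\,d\tilde N^\gamma_s,
\]
keeping the same uniform bounds on the $h_\gamma$ and their derivatives in $x$. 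This step, together with the parallel verifications of {\bf (A1)}--{\bf (A4(m+3,p))} and of the integrability requirement \eqref{A6bis} (which follows upon composing the $\sup_{t\in[0,T]}$ bound in Assumption {\bf (A)} with $\kappa$), is where I expect the main bookkeeping effort to lie, since one must track how each quadratic variation and each sup-norm behaves under the time change without picking up factors that grow with $n$.

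Once these verifications are in place, Theorem \ref{Thsplitting2} applies on $[0,d'T]$ and yields
\[
\EE\Big(\sup_{t\in[0,d'T]} \|Z_1(t)-Z_0(t)\|_m^p\Big) \le C\big(\EE\|Z_1(0)-Z_0(0)\|_m^p + A^p\big),
\]
where $A=\sup_{\omega}\sup_{t\in[0,d'T]}\max_{0\le r\le d_1} |\tilde V^r_{1,t}-\tilde V^r_{0,t}|$. Since the two initial conditions coincide and, exactly as in the proof of Theorem \ref{thspeed1}, a direct inspection of the $\tilde V^r_{\ee,t}$ gives $A\le T/n$, restricting the time variable to $t=d'\,t_i$ with $t_i\in\mathcal T_n$ and using $Z_0(d't_i)=Z(t_i)$, $Z_1(d't_i)=Z^{(n)}(t_i)$ delivers the announced bound $Cn^{-p}$.
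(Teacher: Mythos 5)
Your proposal is correct and follows essentially the same route the paper intends: the paper omits this proof, saying only that it is similar to that of Theorem \ref{thspeed1} (with a pointer to \cite{GK}, Theorem 5.2), i.e. the same time-change construction on $[0,d'T]$ with Theorem \ref{Thsplitting2} invoked in place of Theorem \ref{Thsplitting1}. Your extra step of checking that the time-changed data inherit Assumption {\bf (A5(m))} (via $\tilde N^{\gamma}_t=N^{\gamma}_{\kappa(t)}$, $\tilde V_t=V_{\kappa(t)}$) and the integrability condition \eqref{A6bis} is precisely the additional bookkeeping that this adaptation requires.
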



\begin{thebibliography}{99}

\bibitem{BM} Blanes, S., Moan, P. C., 2000,  Splitting methods for the time-dependent Schr\"odinger equation.
 {\it  Phys. Lett. A} {\bf  265},  no. 1--2, 3542.

\bibitem{CW}  Cardon-Weber, C., 2000.  Implicit approximation scheme for the Cahn-Hilliard stochastic equation,
 Pr\'e\-pu\-bli\-ca\-tion 613 du Laboratoire de Probabilit\'es et
Mod\`eles Al\'eatoires.

\bibitem{dBD1} de Bouard, A., Debussche, A., 2004,  A semi-discrete scheme for the nonlinear Schr\"odinger
equation, {\it Num. Math.}, {\bf 96}, 733--770.

\bibitem{dBD2} de Bouard, A., Debussche, A., 2006, Weak and Stong order of Convergence for a Semidiscrete
scheme for the Stochastic Nonlinear Schr\"odinger Equation, {\it Appl. Math. Optim.}, {\bf 54},
369--399.

\bibitem{DePr1}  Debussche, A., Printems, J., 1999.  Numerical simulations for the stochastic Korteweg-de-Vries
equation, {\it Physica D.} {\bf  134}, 200--226.

\bibitem{DePr}  Debussche, A., Printems, J., 2009,  Weak order for the discretization of the stochastic heat equation.
  {\it Math. Comp.}  {\bf  78},  no. 266, 845--863.

\bibitem{DF} Dujardin, G.,  Faou, E., 2008,  Qualitative behavior of splitting methods for the linear Schr\"odinger
 equation in molecular dynamics.  CANUM 2006---Congrès National d'Analyse Num\'erique,
 234--239, ESAIM Proc., {\bf 22}, EDP Sci., Les Ulis.

\bibitem{G} Gy\"ongy, I., 1982 On stochastic equations with
respect to semimartingales III,
{\it Stochastics}, {\bf 7}, 231--254.

\bibitem{Gy1} Gy\"ongy, I., 1998. Lattice Approximations for Stochastic Quasi-Linear Parabolic Partial
Differential Equations Driven by Space-Time White noise I, {\it Potential Analysis} {\bf 9}, 1--25 ;
 1999,  Lattice Approximations for Stochastic Quasi-Linear Parabolic Partial
Differential Equations Driven by Space-Time White noise II, {\it Potential Analysis} {\bf  11}, 1--37.


\bibitem{GK} Gy\"ongy, I., Krylov, N.V., 2003
On the splitting-up method and stochastic partial differential equations
{\it The Annals od Probability}, {\bf 31-2}, 564--591.

\bibitem{GK2} Gy\"ongy, I., Krylov, N.V., 2005,
 An accelerated splitting-up method for parabolic equations. {\it  SIAM J. Math. Anal.}
{\bf  37} ,  no. 4, 1070--1097.

\bibitem{GM} Gy\"ongy, I.,  Millet, A., 2005,
On Discretization Schemes for Stochastic Evolution Equations,
{\it Potential Analysis}, {\bf  23},   99--134.

\bibitem{GM2} Gy\"ongy, I.,  Millet, A., 2009,  Rate of convergence of space time approximations for stochastic evolution equations.
 Potential Anal.  {\bf 30}  ,  no. 1, 29--64.

\bibitem{Haus} Hausenblas, E.,  2003,  Approximation for Semilinear Stochastic Evolution Equations,
{\it Potential Analysis}  {\bf 18},  no. 2, 141--186.


\bibitem{KR-81} Krylov, N.V.  Rosovskii, B.L., 1981,
Stochastic evolution equations, {\it J. Soviet Mathematics},
{\bf 16},  1233--1277.

\bibitem{KR-82} Krylov, N.V., Rosovskii, B., On characteristics of degenerate second order
parabolic It\^o equations, {\it J. Soviet Mathh.}, {\bf 32}, 336--348.

\bibitem{Lions+M1972} J.L. Lions and E. Magenes,
\newblock{\sc Non-homogeneous boundary value problems and applications. Vol. I,}
\newblock{Grundlehren der Mathematischen Wissenschaften [Fundamental
Principles of Mathematical Sciences], 181. Springer-Verlag, New
York-Heidelberg, 1972.}


\bibitem{MM} Millet, A., Morien, P.-L., 2005,  On implicit and explicit discretization schemes
 for parabolic SPDEs in any dimension.  Stochastic Process. Appl.  115,  no. 7, 1073--1106.

\bibitem{NT} Neuhauser, C. Thalhammer, M., 2009,  On the convergence of splitting methods for
 linear evolutionary Schr\"odinger equations involving an unbounded potential.  BIT {\bf 49},  no. 1, 199--215.

\bibitem{Pa} Pardoux, E., 1975,
\'Equations aux d\'eriv\'ees partielles stochastiques
nonlin\'eares monotones. \'Etude de solutions fortes de type It\^o, Th\`ese
Doct. Sci. Math. Univ. Paris Sud.

\bibitem{Prin}  Printems, J.,   2001. On the discretization in time of parabolic stochastic differential equations,
{\it Math. Model and Numer. Anal.  } {\bf  35(6)},  1055--1078.

\end{thebibliography}
\end{document}